\numberwithin{equation}{section}
\newtheorem{Theorem}{Theorem}[section]
\newtheorem{Lemma}[Theorem]{Lemma}
\newtheorem{Proposition}[Theorem]{Proposition}
\newtheorem{Conjecture}[Theorem]{Conjecture}
 { \theoremstyle{definition}
\newtheorem{Definition}[Theorem]{Definition}
\newtheorem{Example}[Theorem]{Example}
\newtheorem{Remark}[Theorem]{Remark}
\newtheorem{Question}[Theorem]{Question}
\newtheorem{Exercise}[Theorem]{Exercise}
\newtheorem{Claim}[Theorem]{Claim}
\newtheorem*{Note}{Note}
}
\begin{document}

\allowdisplaybreaks

\newcommand{\arXivNumber}{1809.05747}

\renewcommand{\thefootnote}{}

\renewcommand{\PaperNumber}{035}

\FirstPageHeading

\ShortArticleName{An Introduction to Higgs Bundles via Harmonic Maps}

\ArticleName{An Introduction to Higgs Bundles via Harmonic Maps\footnote{This paper is a~contribution to the Special Issue on Geometry and Physics of Hitchin Systems. The full collection is available at \href{https://www.emis.de/journals/SIGMA/hitchin-systems.html}{https://www.emis.de/journals/SIGMA/hitchin-systems.html}}}

\Author{Qiongling LI}

\AuthorNameForHeading{Q.~Li}

\Address{Chern Institute of Mathematics and LPMC, Nankai University, Tianjin 300071, China}
\Email{\href{mailto:qiongling.li@gmail.com}{qiongling.li@gmail.com}}
\URLaddress{\url{https://sites.google.com/site/qionglingli/}}

\ArticleDates{Received October 16, 2018, in final form April 26, 2019; Published online May 04, 2019}

\Abstract{This survey studies equivariant harmonic maps arising from Higgs bundles. We explain the non-abelian Hodge correspondence and focus on the role of equivariant harmonic maps in the correspondence. With the preparation, we review current progress towards some open problems in the study of equivariant harmonic maps.}

\Keywords{Higgs bundles; harmonic maps; non-abelian Hodge correspondence}

\Classification{53C43; 53C07; 53C21}

\renewcommand{\thefootnote}{\arabic{footnote}}
\setcounter{footnote}{0}

\section{Introduction}
In this survey, we study equivariant harmonic maps from the Riemannian universal cover of a~surface $S$ into the noncompact symmetric space for semisimple representations from the fundamental group of $S$ into a semisimple Lie group.

The celebrated non-abelian Hodge correspondence, developed mainly by Corlette \cite{Corlette}, Do\-naldson~\cite{Donaldson}, Hitchin \cite{Hitchin92} and Simpson \cite{Simpson88}, is a homeomorphism between the moduli space of Higgs bundles and the representation variety. Equivariant harmonic maps play an important role in this correspondence. Let's elaborate this correspondence in more detail. Following the work of Donaldson \cite{Donaldson} and Corlette~\cite{Corlette}, for any irreducible representation $\rho$ of the fundamental group of~$S$ into a semisimple Lie group~$G$, there exists a unique $\rho$-equivariant harmonic map~$f$ from $\widetilde\Sigma$ to the corresponding symmetric space of~$G$. The equivariant harmonic map further gives rise to a Higgs bundle, a pair $(E,\phi)$ consisting of a holomorphic vector bundle $E$ over a~Riemann surface structure $\Sigma$ on $S$ and a holomorphic section of $\operatorname{End}(E)\otimes K$, the Higgs field, and $K$ is the holomorphic cotangent line bundle over $\Sigma$. Conversely, by the work of Hitchin~\cite{Hitchin92} and Simpson~\cite{Simpson88}, a stable Higgs bundle admits a unique harmonic metric on the bundle solving the Hitchin equation. The harmonic metric further gives rise to an irreducible representation $\rho$ into $G$ and a $\rho$-equivariant harmonic map into the corresponding symmetric space. These two directions together give the celebrated non-abelian Hodge correspondence.

The relation between harmonic maps with Higgs bundles is transcendental since it involves solving a highly nontrivial second-order elliptic system, the Hitchin equation. Our goal is to make use of Higgs bundles and the solution to the Hitchin equation to investigate the properties of corresponding harmonic maps. For instance, we ask how the energy density of harmonic maps changes along the $\mathbb C^*$-flow on the moduli space of Higgs bundles.

The paper is organized as follows. In Part~\ref{Part1}, we recall some preliminaries on Higgs bundles and the non-abelian Hodge correspondence. Since we focus on the harmonic map point of view, our main goal is to explain the role of harmonic maps (or harmonic metrics) in the correspondence and its explicit relationship with the data of Higgs bundles. Then we introduce several important concepts for the moduli space of Higgs bundles including the Hitchin fibration, the Hitchin section and the $\mathbb C^*$-action, the notion of cyclic Higgs bundles and the maximal representations. In Part~\ref{Part2}, we write the Hitchin equation explicitly and express the energy density, the pullback metric and the sectional curvature of the tangent plane about harmonic maps. Then we use explicit examples to do calculations on the Hitchin equation and relate the estimates with the associated geometry. In the last Part \ref{Part3}, we discuss selected topics on equivariant harmonic maps in terms of different types of information of Higgs bundles. We collect several open and interesting questions here and explain to the reader the current progress towards such questions.

This survey is based on lecture notes prepared for the 3-hour mini-course ``An introduction to cyclic Higgs bundles and complex variation of Hodge structures" that the author gave at the University of Illinois at Chicago. The paper particularly deals with the analytic aspect related to the non-abelian Hodge correspondence. The readers might find that it does not mention at all the proofs of big theorems, such as the theorem of Hitchin and Simpson or the theorem of Corlette and Donaldson. Instead, the author puts more time on introducing basic notations in differential geometry, deducing expressions of associated geometric objects, and also doing detailed calculations on the Hitchin equation. All of these efforts are aimed at helping the readers to make use of the tool of Higgs bundles by understanding the solutions to the Hitchin equation and possibly the related geometry. This survey is targeted at graduate students and junior postdocs.

\part{Set-up}\label{Part1}
\section{Preliminaries and non-abelian Hodge correspondence}\label{One}
Notations:
\begin{alignat*}{3}
& S \quad && \text{-- a smooth surface};& \\
& \Sigma \quad && \text{-- a Riemann surface structure over $S$};& \\
& E \quad && \text{-- a complex vector bundle on $\Sigma$};& \\
& D \quad && \text{-- a connection on $E$};& \\
& H \quad && \text{-- a Hermitian metric on $E$};& \\
& \bar\partial_E \quad && \text{-- a holomorphic structure on $E$};& \\
& \nabla_{\bar\partial_E,H} \quad && \text{-- the Chern connection determined by $\bar\partial_E$ and $H$};& \\
& K \quad && \text{-- the holomorphic cotangent bundle of $\Sigma$};& \\
& g_0 \quad && \text{-- a conformal metric on $\Sigma$};& \\
& \omega \quad && \text{-- the K\"ahler form on $(\Sigma, g_0)$};& \\
& O \quad && \text{-- the trivial line bundle $S\times \mathbb C$ on $S$};& \\
& \mathcal{O} \quad && \text{-- the trivial holomorphic line bundle $\Sigma\times \mathbb C$ on $\Sigma$};& \\
& \Gamma(S, E) \quad && \text{-- the space of smooth sections of $E$};& \\
& \Omega^k(S, E) \quad && \text{-- the space of smooth $k$-forms valued in $E$}. &
\end{alignat*}

\subsection{Basic notions}
Throughout the paper, let $S$ be a closed orientable surface of genus $g$ at least $2$ and fix a~Riemann surface structure $\Sigma$ on $S$. We begin with a rapid introduction to differential geometry of complex vector bundles. One may refer the materials to Kobayashi's book~\cite{Kobayashi}. Note that the following notions can be defined for higher-dimensional manifolds. But for simplicity, we only deal with surfaces and one may notice our definitions might be simpler than the ones for higher-dimensional manifolds in some cases.
\begin{Definition}A Hermitian metric $H$ on a complex vector bundle $E$ over $S$ is a $C^{\infty}$ family of Hermitian inner products on $E$, which is $\mathbb C$-linear in the second variable and conjugate-linear in the first variable.
\end{Definition}

One can extend the definition of the Hermitian pairing between $\Omega^k(\Sigma, E)$ and $\Omega^l(\Sigma, E)$ by pairing sections in $E$ and wedging the forms as usual: for $\alpha_1\in \Omega^k(\Sigma, \mathbb C)$, $\alpha_2\in \Omega^l(\Sigma, \mathbb C)$, and $s_1, s_2\in \Gamma(\Sigma, E)$,
\begin{gather*}
H(\alpha_1\otimes s_1, \alpha_2\otimes s_2)=(\alpha_1\wedge\alpha_2)\cdot H(s_1,s_2)\in \Omega^{k+l}(\Sigma, \mathbb C).
\end{gather*}

Suppose we are given two Hermitian bundles $(E_1, H_1)$ and $(E_2, H_2)$, consider a section $\psi\in\Omega^k(S, \operatorname{Hom}(E_1,E_2))$, then we can define its adjoint $\psi^*\in \Omega^k(S, \operatorname{Hom}(E_2, E_1))$ by the property: for $s_1\in \Gamma(S, E_1), s_2\in \Gamma(S, E_2)$,
\begin{gather}\label{dual}
H_2(\psi s_1, s_2)=H_1(s_1, \psi^* s_2).\end{gather}

Fix any background K\"ahler metric $g_0=g_0(z)({\rm d}z\otimes {\rm d}\bar z+{\rm d}\bar z\otimes {\rm d}z)$ on $\Sigma$ where $\omega={\rm i} g_0(z){\rm d}z\wedge {\rm d}\bar z$ is the K\"ahler form which is also the volume form. We renormalize the metric $g_0$ such that $\int_\Sigma \omega=2\pi$. The metric $g_0$ induces a natural pairing $\langle\,,\,\rangle$ on $\Omega^k(\Sigma,\mathbb C)$. The Hodge star $\star$ is a~conjugate-linear map from $\Omega^k(\Sigma, \mathbb C)$ to $\Omega^{2-k}(\Sigma, \mathbb C)$ such that
\begin{gather}\label{star}
\alpha_1\wedge\star \alpha_2=\langle \alpha_1,\alpha_2\rangle \omega.
\end{gather} In particular, $\star {\rm d}z={\rm i}\,{\rm d}\bar z$ and $\star {\rm d}\bar z=-{\rm i}\,{\rm d}z$.

Combining the formula in equation (\ref{dual}) and (\ref{star}), one can extend the definition of the Hodge star operator to $\Omega^k(\Sigma, \operatorname{Hom}(E_1, E_2))$ as follows: for $\Psi=\sum\limits_{i=1}^k\alpha_i\otimes\psi_i\in \Omega^k(\Sigma, \operatorname{Hom}(E_1, E_2))$ where $\alpha_i\in \Omega^k(\Sigma, \mathbb C)$, $\psi_i\in \Gamma(\Sigma, \operatorname{Hom}(E_1,E_2))$, we define
\begin{gather}\label{dualstar}
\star\Psi=\star\left(\sum\limits_{i=1}^k\alpha_i\otimes\psi_i\right):=\sum\limits_{i=1}^k(\star\alpha_i)\otimes \psi_i^*\in \Omega^{2-k}(\Sigma, \operatorname{Hom}(E_2, E_1)).
\end{gather}

\begin{Note}
For $\phi\in\Omega^{1,0}(\Sigma, \operatorname{Hom}(E_1, E_2))$, $\star\phi={\rm i}\phi^*$. For $\phi\in\Omega^{0,1}(\Sigma, \operatorname{Hom}(E_1, E_2))$, $\star\phi=-{\rm i}\phi^*$. For $\Psi\in \Omega^1(\Sigma, \operatorname{Hom}(E_1,E_2))$, by decomposing into $(1,0)$- and $(0,1)$-forms, $\Psi=\Psi^{1,0}+\Psi^{0,1}$. Then $\star\Psi={\rm i}(\Psi^{1,0})^*-{\rm i}(\Psi^{0,1})^*$.
\end{Note}

The induced pairing on $\Omega^k(\Sigma, \operatorname{Hom}(E_1,E_2))$ is linearly extending the pairing
\begin{gather*}
\langle \alpha_1\otimes \psi_1, \alpha_2\otimes\psi_2\rangle :=\langle \alpha_1,\alpha_2\rangle \cdot \langle \psi_1,\psi_2\rangle =\frac{\alpha_1\wedge\star\alpha_2}{\omega}\cdot \operatorname{tr}(\psi_1\psi_2^*)
\end{gather*}
for $\alpha_1\otimes\psi_1, \alpha_2\otimes\psi_2\in \Omega^1(\operatorname{Hom}(E_1, E_2))$.

Equivalently, we have the pairing for $\Psi_1, \Psi_2\in \Omega^1(\Sigma, \operatorname{Hom}(E_1, E_2))$,
\begin{gather}\label{GeneralizedPairing}
\langle \Psi_1, \Psi_2\rangle =\operatorname{tr}(\Psi_1\wedge\star\Psi_2)/\omega,
\end{gather}
where $tr$ denotes the trace for an endomorphism, or likewise
\begin{gather*} ||\Psi||^2=\big(||\Psi(\partial_x)||^2+||\Psi(\partial_y)||^2\big)/g_0(z).\end{gather*}

\begin{Definition}A connection on a complex vector bundle $E$ over $S$ is a differential operator $D\colon \Omega^k(S,E)\rightarrow \Omega^{k+1}(S,E)$ satisfying the Leibniz rule: if $\alpha\in \Omega^p(S, \mathbb C)$, $\sigma\in \Omega^{k}(S,E)$,
\begin{gather*} D(\alpha\wedge\sigma)={\rm d}\alpha\wedge\sigma+(-1)^p\alpha\wedge D\sigma.\end{gather*}
\end{Definition}
For example, on a trivial vector bundle $S\times \mathbb C^n$, the usual differential operator ${\rm d}$ is a connection. Therefore, for a~rank~$n$ complex vector bundle~$E$ with trivial determinant, we can define a~${\rm SL}(n,\mathbb C)$-connection as follows.
\begin{Definition}On a complex vector bundle $E$ satisfying $\det E\cong O$, a ${\rm SL}(n,\mathbb C)$-connection on~$E$ is a connection such that its induced connection on the trivial line bundle $\det E$ is~${\rm d}$.
\end{Definition}

\begin{Definition}The curvature of a connection $D$ on $E$ is the operator
\begin{gather*}F_D=D\circ D\colon \ \Omega^k(S,E)\rightarrow \Omega^{k+2}(S,E).\end{gather*}
\end{Definition}
Fact: $F_D$ turns out to be $C^{\infty}$-linear, i.e., $F_D\in \Omega^2(S,\operatorname{End}(E))$. The Chern--Weil theory tells us that the first Chern class of $E$ is $c_1(E)=\big[\frac{\rm i}{2\pi}\cdot \operatorname{tr}(F_D)\big]\in H_{\rm dR}^2(S, \mathbb C)$, which does not depend on the choices of the connection~$D$. The degree of~$E$ is the integral of any representative in~$c_1(E)$, that is,
\begin{gather*}
\deg E=\int_S c_1(E)=\int_S \frac{{\rm i}}{2\pi}\cdot \operatorname{tr}(F_D)=\int_S \frac{{\rm i}}{2\pi}\cdot \Lambda \operatorname{tr}(F_D)\cdot \omega.
\end{gather*}

The contraction operator $\Lambda\colon \Omega^2(\Sigma, \mathbb C)\rightarrow \Omega^0(\Sigma, \mathbb C)$ is defined by $\Lambda(f\omega)=f$, for any smooth function $f$ on $\Sigma$. We extend the contraction operator to $\Omega^2(\Sigma, \operatorname{End}(E))$. Equivalently, we can write it as $\Lambda (F_D)=F_D/\omega$.

A connection $D$ is said to be \textit{flat} if $F_D=0$. Fix a basepoint $p\in S$ and a frame $e$ of $E_p$. Denote $\pi_1(S)=\pi_1(S, p)$. A flat connection $D$ on $E$ gives rise to a representation of $\pi_1(S)$ as follows. For each loop based at $p$, the parallel transport of the frame $e$ defines an element of ${\rm GL}(n, \mathbb C)$. In particular, for a flat connection, the element only depends on the homotopy class of the loop. Therefore, we obtain an element $\rho=\operatorname{hol}(D)\in \operatorname{Hom}(\pi_1, {\rm GL}(n, \mathbb C))$. If $D$ is a flat ${\rm SL}(n,\mathbb C)$-connection, the holonomy lies in ${\rm SL}(n,\mathbb C)$ correspondingly. Conversely, given a~representation $\rho\colon \pi_1(S)\rightarrow {\rm SL}(n, \mathbb C)$, we can construct a flat vector bundle~$(E, D)$ as follows,
\begin{gather*}(E,D):= \big(\widetilde{S}\times_{\rho}\mathbb C^n, \text{the natural connection descends from ${\rm d}$ on $\widetilde{S}\times \mathbb C^n$}\big),\end{gather*} where $\widetilde{S}$ is the universal cover of $S$.
\begin{Definition}\quad
\begin{enumerate}\itemsep=0pt
\item[(1)] A connection $D$ on $E$ is called irreducible if there exists no proper $D$-invariant subbundle.
\item[(2)] A connection $D$ is called reductive if $(E, D)=\bigoplus\limits_{i=1}^k(E_i, D_i)$ where each $D_i$ is an irreducible connection on $E_i$.
\end{enumerate}
\end{Definition}
Correspondingly, we have the following definitions.
\begin{Definition}\quad
\begin{enumerate}\itemsep=0pt
\item[(1)] A representation $\rho\colon \pi_1(S)\rightarrow {\rm SL}(n,\mathbb C)$ is called irreducible if the induced representation on~$\mathbb C^n$ is irreducible.
\item[(2)] A representation $\rho\colon \pi_1(S)\rightarrow {\rm SL}(n,\mathbb C)$ is called reductive if the induced representation on~$\mathbb C^n$ is completely reducible.
\end{enumerate}
\end{Definition}
\begin{Definition}
A connection $D$ on $E$ is called unitary if for any two sections $s, t\in \Gamma(S,E)$,
\begin{gather*}{\rm d}(H(s,t))=H(Ds,t)+H(s,Dt).\end{gather*}
\end{Definition}

\begin{Definition}A holomorphic structure on a complex vector bundle $E$ over $\Sigma$ is a differential operator
$ \bar\partial_E\colon \Omega^{p,q}(\Sigma,E)\rightarrow \Omega^{p,q+1}(\Sigma,E)$ satisfying the Leibniz rule: if $\alpha\in \Omega^{p,q}(\Sigma, \mathbb C)$, $\sigma\in \Omega^{k,l}(\Sigma,E)$,
\begin{gather*}\bar\partial_E(\alpha\wedge \sigma)=(\bar\partial\alpha)\wedge\sigma+(-1)^{p+q}\alpha\wedge \bar\partial_E\sigma.\end{gather*}

We call a section $\sigma$ of $E$ holomorphic if $\bar\partial_E\sigma=0$.
\end{Definition}
For example, on a trivial vector bundle $\Sigma\times \mathbb C^n$, the usual differential operator $\bar\partial$ is a holomorphic structure.

Given any connection $D$ on $E$, by decomposing into $(1,0)$- and $(0,1)$-forms, we have $D=D^{1,0}+D^{0,1}$. Then $D^{0,1}$ gives a holomorphic structure on $E$. But given a holomorphic structure on $E$, there are many connections $D$ such that $D^{0,1}=\bar\partial_E$.

\begin{Theorem}For a holomorphic vector bundle $E$ with a Hermitian metric $H$, there exists a~unique connection $\nabla_{\bar\partial_E, H}$, called the Chern connection, such that
\begin{enumerate}\itemsep=0pt
\item[$(i)$] $\nabla_{\bar\partial_E, H}^{0,1}=\bar\partial_E$,
\item[$(ii)$] $\nabla_{\bar\partial_E, H}$ is unitary.
\end{enumerate}
\end{Theorem}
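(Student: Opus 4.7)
The plan is to establish uniqueness first, which forces an explicit local formula for the connection, and then to use that formula to construct the connection globally.

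For uniqueness, I would fix a point $p \in \Sigma$, a local holomorphic frame $\{e_i\}$ for $E$ in a neighborhood of $p$, and write $\nabla e_i = \sum_j \theta_i^{\,j}\, e_j$ for some $1$-forms $\theta_i^{\,j}$ (the connection matrix $\theta$). Condition (i) combined with $\bar\partial_E e_i = 0$ (because the frame is holomorphic) forces each $\theta_i^{\,j}$ to be purely of type $(1,0)$. Next I would expand condition (ii) on the frame elements: writing $H_{ij} = H(e_i,e_j)$, unitarity reads
\begin{gather*}
\mathrm{d} H_{ij} \;=\; H(\nabla e_i, e_j) + H(e_i, \nabla e_j),
\end{gather*}
which, after decomposing into bidegrees, splits into a $(1,0)$-equation and a $(0,1)$-equation that together determine the matrix $\theta$ in terms of $H$ and $\partial H$ (essentially $\theta = H^{-1} \partial H$ in matrix notation, modulo the conjugate-linear-in-first convention used here). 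Since $\theta$ is determined uniquely at every point, any two connections meeting (i) and (ii) coincide.

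For existence, I would use that same local formula to \emph{define} a connection $\nabla^{(U)}$ on each coordinate patch $U$ equipped with a holomorphic frame, and then verify directly that $\nabla^{(U)}$ satisfies (i) and (ii) on $U$. The global existence step is then to show that on the overlap of two patches $U \cap V$, with holomorphic transition matrix $g$ between the frames, the locally defined connection matrices transform by the standard gauge rule $\theta_V = g^{-1} \theta_U g + g^{-1}\,\mathrm{d}g$. This compatibility follows from the holomorphicity of $g$ (so that $\bar\partial g = 0$ and $\mathrm{d}g = \partial g$) together with the transformation rule $H_V = g^* H_U g$ for the Hermitian matrix under a change of frame. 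Once that is checked, the local $\nabla^{(U)}$ patch together into a well-defined global connection, which by construction satisfies (i) and (ii).

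The main obstacle is purely bookkeeping: one must carefully track the conjugate-linear-in-the-first-variable convention for $H$ and the bidegree decomposition under $\mathrm{d} = \partial + \bar\partial$ in order to isolate $\theta$ from the unitarity identity. Once the correct formula $\theta = H^{-1}\partial H$ (in the appropriate transpose/conjugate form) is written down, both uniqueness and the gauge-covariance check for existence become essentially algebraic manipulations with no further analytic input.
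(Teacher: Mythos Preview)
Your proposal is correct and is the standard textbook argument. Note, however, that the paper does not actually supply a proof of this theorem: it is stated as a known fact (with a general reference to Kobayashi's book), and the paper then records the consequence $\partial(H(s,t))=H(\bar\partial_E s,t)+H(s,\nabla_{\bar\partial_E, H}^{1,0}t)$ without further justification. Your local formula $\theta = h^{-1}\partial h$ is precisely what the paper later rederives in Section~4 (there written $A = h^{-1}\partial h$) when expressing the Hitchin equation in a local holomorphic frame, so your approach is entirely consistent with the paper's conventions and computations.
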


The above conditions (i) and (ii) for the Chern connection $\nabla_{\bar\partial_E, H}$ imply that the following holds
\begin{gather}\label{HolomorphicChern}
\partial(H(s,t))=H(\bar\partial_E s,t)+H\big(s,\nabla_{\bar\partial_E, H}^{1,0}t\big).
\end{gather}

\subsubsection{The Riemannian geometry of symmetric space}
Denote $G={\rm SL}(n,\mathbb C)$, $K={\rm SU}(n)$ and by $\mathfrak g$, $\mathfrak k$ the corresponding Lie algebras $\mathfrak{sl}(n,\mathbb C)$, $\mathfrak{su}(n)$ respectively. With respect to the Killing form on $\mathfrak g$, we have an orthogonal decomposition $\mathfrak g=\mathfrak k\oplus \mathfrak p$, where $\mathfrak p={\rm i}\cdot \mathfrak{su} (n)$. The tangent space at $T_{eK}G/K$ is isomorphic to $\mathfrak p$. The Killing form $B$ on $\mathfrak g$ is \begin{gather*}B(Y_1,Y_2)=2n\cdot \operatorname{tr}(Y_1Y_2).\end{gather*} The restriction of $B$ on $T_{eK}G/K\cong\mathfrak p$. Denote by $L_g$ the left action by~$g$ on $G/K$. Pulling back the inner product on $T_{eK}G/K$ using $L_{g^{-1}}$, we can define a metric on $T_{gK}G/K$ . This is the unique $G$-invariant metric on $G/K$ up to a scalar multiple.

The sectional curvature of the tangent plane spanned by two tangent vectors $Y_1, Y_2\in \mathfrak p$ is given by (see Jost's book \cite{Jost} for reference)
\begin{gather}\label{CurvatureOnp}
K(Y_1,Y_2)=\frac{B([Y_1,Y_2], [Y_1, Y_2])}{B(Y_1,Y_1)B(Y_2,Y_2)-B(Y_1,Y_2)^2}\leq 0.
\end{gather}
And the sectional curvature for a plane inside $T_{gK}G/K$ is just the sectional curvature of the pullback tangent plane inside $T_{eK}G/K\cong \mathfrak p$ by $L_{g^{-1}}$. Note that in the case $n=2$, ${\rm SL}(2,\mathbb C)/{\rm SU}(2)$ is of constant sectional curvature $-\frac{1}{2}$.

We consider a model for the space $G/K$, the space of positive definite Hermitian matrices of unit determinant \begin{gather*}N=\big\{ A\in M_n(\mathbb C)\,|\,\bar{A}^t=A,\, \det A=1,\, A>0\big\}.\end{gather*} The space $N$ may also be interpreted as the space of Hermitian metrics on $\mathbb C^n$ inducing the metric $1$ on $\det \mathbb C^n$. The group ${\rm SL}(n, \mathbb C)$ acts transitively on $N$ on the left, $g\cdot A=\big(g^{-1}\big)^*Ag^{-1}$ for $A\in N$ and $g\in {\rm SL}(n,\mathbb C)$. The map $\Psi\colon G/K\ni gK\rightarrow g\cdot {\rm Id}=\big(g^{-1}\big)^*g^{-1}\in N$ defines a~diffeomorphism which is equivariant for the left action of~$G$, that is, $L_g\circ \Psi=\Psi\circ L_g$ where~$L_g$ denotes the left action by $g$ on both~$G/K$ and~$N$.

We equip $N$ with the Riemannian metric from the one on $G/K$ using the diffeomorphism $\Psi$. Precisely, the pairing at $T_AN$ is by pulling back the pairing on $\mathfrak p$ using the map $L_{g^{-1}}\circ \Psi^{-1}$ and so is the sectional curvature of plane. At the point $A=g\cdot {\rm Id}=\big(g^{-1}\big)^*g^{-1}\in N$ for some $g\in G$, the differential map ${\rm d}\big(L_{g^{-1}}\circ\Psi^{-1}\big)|_{A}\colon T_AN\rightarrow \mathfrak p$ is given by: for every tangent vector $M\in T_AN$,
\begin{align}
{\rm d}\big(L_{g^{-1}}\circ\Psi^{-1}\big)\big|_{A}(M)& ={\rm d}\big(\Psi^{-1}\circ L_{g^{-1}}\big)\big|_{A}(M)={\rm d}\big(\Psi^{-1}\big)\big|_{{\rm Id}}\big(g^*Mg\big)\nonumber\\
&=-\tfrac{1}{2}g^*Mg = -\tfrac{1}{2}A{\rm d}\big(g^{-1}\big)\big(A^{-1}M\big).\label{PullbackVector}
\end{align}

Denote the metric on $N$ by $g_N$, then at $A=g\cdot {\rm Id}=\big(g^{-1}\big)^*g^{-1}\in N$, the metric $g_N$ is given by, for $M_1, M_2\in T_AN$,
\begin{align}
g_N(M_1,M_2) & = B\big({\rm d}\big(L_{g^{-1}}\circ\Psi^{-1}\big)\big|_{A}(M_1), {\rm d}\big(L_{g^{-1}}\circ\Psi^{-1}\big)\big|_{A}(M_2)\big)\nonumber\\
&= B\big(A{\rm d}\big(g^{-1}\big)\big({-}\tfrac{1}{2}A^{-1}M_1\big), A{\rm d}\big(g^{-1}\big)\big({-}\tfrac{1}{2}A^{-1}M_2\big)\big)=\tfrac{1}{4}B\big(A^{-1}M_1, A^{-1}M_2\big)\nonumber\\
&= \tfrac{n}{2}\operatorname{tr}\big(A^{-1}M_1A^{-1}M_2\big).\label{Metric}
\end{align}

\subsection{The non-abelian Hodge correspondence}
Suppose we are given a representation $\rho\colon \pi_1\rightarrow {\rm SL}(n,\mathbb C)$ and hence a flat ${\rm SL}(n,\mathbb C)$-bundle $(E,D)$ over $S$, and a Riemann surface structure~$\Sigma$, we aim to obtain the following holomorphic object, the Higgs bundle. This is one direction of the non-abelian Hodge correspondence.

\begin{Definition}A rank $n$ Higgs bundle over $\Sigma$ is a pair $(E, \phi)$ where $E$ is a holomorphic vector bundle of rank~$n$, and $\phi\in H^0(\Sigma,\operatorname{End}(E)\otimes K)$, called the \textit{Higgs field}. A~${\rm SL}(n,\mathbb C)$-Higgs bundle is a Higgs bundle $(E,\phi)$ satisfying $\det E=\mathcal{O}$ and $\operatorname{tr} \phi=0$.
\end{Definition}

\subsubsection{Harmonic metric}
We will make use of the following fact:

A connection $D$ on a Hermitian bundle $(E,H)$ decomposes uniquely as \begin{gather*}D=D_H+\Psi_H,\end{gather*} where
(1) $D_H$ is a unitary connection; and (2) $\Psi_H\in \Omega^1(\Sigma, \operatorname{End}(E))$ is self-adjoint. This decomposition is achieved by choosing $\Psi_H\in \Omega^1(\Sigma, \operatorname{End}(E))$ such that \begin{gather*}H(\Psi_Hs,t)=\tfrac{1}{2}\{H(D s,t)+H(s,D t)-{\rm d}(H(s,t))\}.\end{gather*}

We aim to choose the ``best" $H$. For a fixed flat ${\rm SL}(n,\mathbb C)$-vector bundle $(E, D)$ and a conformal Riemannian metric $g_0$ on~$\Sigma$, we define a functional on the space of Hermitian metrics on~$E$: \begin{gather}\label{E(H)}E(H)=\int_\Sigma \langle \Psi_H, \Psi_H\rangle \omega,\end{gather} where the pairing is defined in equation~(\ref{GeneralizedPairing}).
\begin{Definition}
A Hermitian metric $H$ on $(E,D)$ is called \textit{harmonic} if it is a critical point of~$E(H)$. Equivalently, $D_H(\star\Psi_H)=0$, where $\star$ is the Hodge star operator defined in equation~(\ref{dualstar}).
\end{Definition}
\begin{Remark}The functional $E(H)$ is invariant under a conformal change of the metric on the surface $\Sigma$. Therefore a harmonic metric on $(E, D)$ is well-defined on a Riemann surface.
\end{Remark}

\begin{Theorem}[Corlette~\cite{Corlette}, Donaldson \cite{Donaldson}] If $D$ is a reductive flat ${\rm SL}(n, \mathbb C)$-connection on~$E$ over~$\Sigma$, then there exists a harmonic metric~$H$ on~$E$ whose induced metric $\det H$ on $\det E\cong O$ is~$1$.

If $D$ is irreducible, then the harmonic metric is unique.
\end{Theorem}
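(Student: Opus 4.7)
The plan is to reinterpret the problem as existence and uniqueness of an equivariant harmonic map into the symmetric space $N$, and then invoke a heat-flow (à la Donaldson) or variational (à la Corlette) argument in nonpositive curvature. Using the identification of $G/K$ with $N = \{A \in M_n(\mathbb{C}) : \bar A^t = A,\ \det A = 1,\ A > 0\}$ described before (\ref{PullbackVector})--(\ref{Metric}), a Hermitian metric $H$ on $E$ with $\det H = 1$ is exactly the same data as a $\rho$-equivariant map $f_H : \widetilde{\Sigma} \to N$, where $\rho = \operatorname{hol}(D)$. Under this dictionary the self-adjoint part $\Psi_H$ of $D$ translates, via (\ref{PullbackVector}), into the differential of $f_H$, and the functional $E(H)$ in (\ref{E(H)}) becomes, up to a universal constant from (\ref{Metric}), the Dirichlet energy of $f_H$. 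The critical-point condition $D_H(\star \Psi_H) = 0$ is then the tension field equation, so it suffices to produce a $\rho$-equivariant harmonic map $\widetilde{\Sigma} \to N$.

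For existence I would run the harmonic map heat flow from an arbitrary initial metric $H_0$. Short-time existence is standard. The target $N$ is complete and nonpositively curved by (\ref{CurvatureOnp}), so the Eells--Sampson Bochner identity forces $\Delta e(f_t) \geq -C\,e(f_t)$ on the compact quotient $\Sigma$, giving pointwise control on the energy density by its average and hence long-time existence of the flow. The normalization $\det H = 1$ is preserved along the flow because it corresponds to landing in $N$ rather than in the full space of Hermitian metrics on $E$.

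The main obstacle is ruling out escape to infinity in $N$ as $t \to \infty$, and this is precisely where reductivity of $D$ enters. If the flow did escape, one could extract an asymptotic direction at infinity in $N$; the $\rho$-equivariance of the flow would make this direction $\rho$-invariant, producing via horofunctions (or equivalently via a limiting $\rho$-invariant Busemann-type convex function on $\widetilde{\Sigma}$) a proper $\rho$-invariant flat subbundle of $(E,D)$. Under the reductivity hypothesis $(E,D) = \bigoplus (E_i, D_i)$ with each $D_i$ irreducible, one argues on each summand separately and shows that no such reduction can exist, forcing the flow to remain in a bounded region of $N / \rho$ and thus to subconverge to a harmonic limit. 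This non-escape estimate is the substantive content of Corlette and Donaldson, and I would quote it rather than rederive the maximum-principle bookkeeping behind it.

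For uniqueness when $D$ is irreducible, let $H_1, H_2$ be two harmonic metrics with trivial induced determinant and let $f_1, f_2 : \widetilde{\Sigma} \to N$ be the associated $\rho$-equivariant harmonic maps. The function $u(x) = d_N(f_1(x), f_2(x))$ is $\pi_1(\Sigma)$-invariant and descends to $\Sigma$. A standard Bochner computation, using harmonicity of both $f_i$ and nonpositive sectional curvature of $N$ from (\ref{CurvatureOnp}), yields $\Delta u^2 \geq 0$, so $u$ is constant on the compact surface $\Sigma$. A pair of equivariant harmonic maps into a nonpositively curved symmetric space at constant distance is joined by a parallel family of geodesics, which produces a $\rho$-invariant flat splitting of $(E,D)$; irreducibility forbids this unless $u \equiv 0$, i.e.\ $H_1 = H_2$.
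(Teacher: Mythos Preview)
The paper does not actually prove this theorem: it is a survey that explicitly declines to reproduce ``the proofs of big theorems, such as the theorem of Hitchin and Simpson or the theorem of Corlette and Donaldson'' (see the Introduction), and simply cites \cite{Corlette,Donaldson}. So there is no in-paper proof to compare against; what the paper \emph{does} do is develop in detail the dictionary you invoke in your first paragraph --- the identification of Hermitian metrics $H$ with $\det H=1$ with $\rho$-equivariant maps $f_H\colon\widetilde\Sigma\to N$, and the matching of $E(H)$ with the Dirichlet energy via Lemma~\ref{TangentHiggs} and equation~(\ref{EnergyEqual}). Your reduction to existence and uniqueness of an equivariant harmonic map is therefore exactly in the spirit of the exposition.

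On the substance of your sketch: the existence outline (heat flow, Eells--Sampson Bochner estimate in nonpositive curvature, reductivity as the obstruction to escape at infinity) is the standard strategy and is fine at the level of a plan; you are candid that the non-escape step is where the real work lies and that you would cite it. One small sharpening for the uniqueness paragraph: the conclusion of the constant-distance argument is not quite a ``flat splitting'' of $(E,D)$ in one step. What you actually obtain is that $h:=H_1^{-1}H_2$, viewed as an $H_1$-self-adjoint positive automorphism of $E$, is $D$-parallel (equivalently, the geodesic direction from $f_1$ to $f_2$ is a parallel section of $f_1^*TN$). Its eigenspace decomposition is then $D$-invariant; irreducibility forces a single eigenvalue, hence $h=\lambda\cdot\mathrm{id}$, and $\det h=1$ with $\lambda>0$ gives $\lambda=1$. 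Phrasing it this way makes the role of irreducibility transparent and avoids the ambiguous phrase ``flat splitting''.
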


We will first recall the definition of equivariant harmonic maps and then explain how a~harmonic metric on a flat vector bundle $(E, D)$ gives rise to an equivariant harmonic map.
\subsubsection{Harmonic map}
Fix $M$ a Riemannian manifold and $g_0$ a Riemannian metric on $S$. We consider a representation $\rho\colon \pi_1(S)\rightarrow \operatorname{Isom}(M)$, the isometry group of $M$. A map $f\colon \widetilde S\rightarrow M$ is called \textit{$\rho$-equivariant} if $f(\gamma\cdot x)=\rho(\gamma)\cdot f(x)$ for all $\gamma\in \pi_1(S)$ and $x\in \widetilde S$. Given a $\rho$-equivariant map $f\colon \big(\widetilde{S}, \widetilde g_0\big)\rightarrow M$ between two Riemannian manifolds, then ${\rm d}f\in \Gamma\big(\widetilde{S},T^*\widetilde{S}\otimes f^*TM\big)$ is also $\pi_1(S)$-equivariant. This implies that the function\begin{gather}\label{EnergyDensity}e(f)=\tfrac{1}{2}\langle {\rm d}f,{\rm d}f\rangle \colon \ \widetilde{S}\rightarrow \mathbb R\end{gather} is $\pi_1(S)$-invariant and hence descends to~$S$. We call $e(f)$ the \textit{energy density} on $\widetilde{S}$ and also on~$S$. The \textit{energy} $E(f)$ is the integral of $e(f)$ with respect to the volume form of $\operatorname{dvol}_{g_0}$, that is,
\begin{gather}\label{Energy}
E(f)=\int_{S} e(f) \operatorname{dvol}_{g_0}.
\end{gather}
Note that $E(f)$ is finite since $S$ is compact.
\begin{Definition}\label{Harmonic}The map $f$ is \textit{harmonic} if it is a critical point of the energy functional $E(f)$.

Equivalently, $f$ is harmonic if $\operatorname{tr}_{g_0}\nabla {\rm d}f=0$ where~$\nabla$ is the natural connection on $T^*S\otimes f^*TM$ induced by the Levi-Civita connections on $(S, g_0)$ and~$M$.
\end{Definition}

\begin{Remark}As in the case of $E(H)$, the energy $E(f)$ is invariant under the conformal change of the metric on~$S$. Hence we can talk about equivariant harmonic maps from the universal cover of a Riemann surface~$\Sigma$. However, the energy density still varies under the conformal change of the metric on $S$ and when we talk about the energy density, we will usually choose~$g_0$ as the conformal metric on~$\Sigma$.
\end{Remark}

\subsubsection{Equivalence between harmonic metric and harmonic map}
 A Hermitian metric on $E=\widetilde{S}\times_{\rho}\mathbb C^n$ inducing the metric $1$ on $\det E$ is a $\rho$-equivariant metric $H$ on $\widetilde{S}\times \mathbb C^n$ of unit determinant. Equivalently, it is a map $f\colon \widetilde{S}\rightarrow N\subset M_n(\mathbb C)$ satisfying
\begin{gather*}f(m)=\overline{\rho(\gamma)}^tf(\gamma\cdot m)\rho(\gamma), \qquad \forall\, \gamma\in \pi_1S, \quad m\in \widetilde{\Sigma}\end{gather*} by $H_m(s,t)=\bar s^tf(m)t$, for any two sections $s$, $t$ of $\widetilde{\Sigma}\times \mathbb C^n$. Using the statement $\Psi_H=-\frac{1}{2}f^{-1}{\rm d}f$ in Lemma~\ref{TangentHiggs} and the formula~(\ref{Metric}) of the metric $g_N$, we obtain
\begin{gather}\label{EnergyEqual}
e(f)=\tfrac{1}{2}||{\rm d}f||^2=\tfrac{1}{2}\big(||f_x||^2+||f_y||^2\big)/g_0=n\cdot \langle \Psi_H,\Psi_H\rangle ,
\end{gather}
since $||f_x||^2=\frac{n}{2}\operatorname{tr}\big(f^{-1}f_xf^{-1}f_x\big)=2n\cdot \operatorname{tr}(\Psi_H(\partial_x)\Psi_H(\partial_x))=2n\cdot ||\Psi_H(\partial_x)||^2$, and similarly $||f_y||^2=2n\cdot ||\Psi_H(\partial_y)||^2$.

Comparing equation (\ref{E(H)}) with~(\ref{Energy}), we can see that $E(f)=n\cdot E(H)$ following from equation~(\ref{EnergyEqual}). Finally, we see that the Hermitian metric~$H$ being harmonic (minimizing the functional $E(H)$) is equivalent to $f\colon \big(\widetilde{S},\widetilde{g_0}\big)\rightarrow N$ being harmonic (minimizing the energy of~$f$).

 \begin{Lemma}\label{TangentHiggs}
\begin{gather*}\Psi_H=-\tfrac{1}{2}f^{-1}{\rm d}f.\end{gather*}
\end{Lemma}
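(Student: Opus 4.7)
The plan is to lift the problem to the universal cover $\widetilde{S}$, where the flat bundle $(E, D) = \widetilde{S} \times_\rho \mathbb{C}^n$ becomes the trivial bundle $\widetilde{S}\times\mathbb{C}^n$ and the pulled-back flat connection is just the ordinary exterior derivative $d$. Under the identification of a $\rho$-equivariant Hermitian metric $H$ with the map $f\colon\widetilde{S}\to N$ described in the text, one has the concrete formula $H(s,t) = \bar{s}^t f\, t$ for any two sections $s,t$ of $\widetilde{S}\times\mathbb{C}^n$, written as $\mathbb{C}^n$-valued functions.

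The heart of the proof is to substitute $D = d$ and this formula into the defining identity
\begin{gather*}
H(\Psi_H s, t) = \tfrac{1}{2}\{H(Ds,t) + H(s,Dt) - d(H(s,t))\}
\end{gather*}
and expand. By the Leibniz rule,
$d(\bar{s}^t f\, t) = \overline{ds}^{\,t} f\, t + \bar{s}^t (df)\, t + \bar{s}^t f\, dt$,
whose first and third terms cancel against $H(ds,t)$ and $H(s,dt)$ respectively. What remains is the compact identity
$H(\Psi_H s, t) = -\tfrac{1}{2}\,\bar{s}^t (df)\, t.$

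To extract $\Psi_H$ as a matrix-valued one-form, I would rewrite the left-hand side as $H(\Psi_H s,t) = \bar{s}^t\,\overline{\Psi_H}^{\,t} f\, t$ and equate the two expressions for arbitrary $s,t$, obtaining $\overline{\Psi_H}^{\,t} f = -\tfrac{1}{2} df$. The step that closes the calculation is the observation that $N$ consists of Hermitian matrices, so $f$ is Hermitian, and the tangent space $T_A N$ also consists of Hermitian matrices, so $df$ is Hermitian-valued; consequently $\overline{f^{-1}}^{\,t} = f^{-1}$ and $\overline{df}^{\,t} = df$. Taking the conjugate-transpose of the displayed identity then gives $\Psi_H = -\tfrac{1}{2}\, f^{-1} df$, as claimed.

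The main subtlety is conceptual rather than computational: one must keep straight the conjugate-linearity convention for $H$ in its first slot, the Hermiticity of both $f$ and $df$, and the $\rho$-equivariance $f(m) = \overline{\rho(\gamma)}^t f(\gamma\cdot m)\rho(\gamma)$ that ensures the locally-defined expression $-\tfrac{1}{2} f^{-1} df$ descends from $\widetilde{S}$ to a well-defined element of $\Omega^1(\Sigma,\operatorname{End}(E))$. As a sanity check, one may verify that the answer automatically satisfies both the self-adjointness requirement $H(\Psi_H s,t) = H(s,\Psi_H t)$ (which, using Hermiticity of $f$ and $df$, reduces to $\overline{f^{-1} df}^{\,t} f = f\, (f^{-1} df)$, a tautology) and the unitarity of $D_H = D - \Psi_H$, thereby recovering the unique decomposition guaranteed by the fact recalled just before the lemma.
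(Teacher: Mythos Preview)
Your proof is correct and follows essentially the same route as the paper: lift to $\widetilde{S}$ where $D=d$, expand $d(H(s,t))=d(\bar s^{\,t}ft)$ by the Leibniz rule, and compare with the defining property of $\Psi_H$. The only cosmetic difference is in the extraction step: the paper uses the self-adjointness of $\Psi_H$ to rewrite $H(\Psi_H s,t)+\bar s^{\,t}(df)t+H(s,\Psi_H t)=0$ as $2\,\bar s^{\,t}f\Psi_H t=-\bar s^{\,t}(df)t$ and reads off $\Psi_H=-\tfrac{1}{2}f^{-1}df$ directly from the second slot, whereas you work from the first slot and invoke the Hermiticity of $f$ and $df$ to undo a conjugate-transpose---both arrive at the same place.
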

The following proof is taken from the lecture notes of O.~Guichard \cite{Guichard}.

\begin{proof}Firstly, $f^{-1}{\rm d}f\in \Omega^1\big(\widetilde{S},\operatorname{End}(\mathbb C^n)\big)$ is equivariant under the $\pi_1(S)$-action. Hence it descends to $S$ and is an element of $\Omega^1(S, \operatorname{End}(E))$.

For any two sections $s$, $t$ of $\widetilde{S}\times \mathbb C^n$, we have by definition, $H_m(s,t)=\bar s^tf(m)t$. We obtain
\begin{enumerate}\itemsep=0pt
\item[(i)] ${\rm d}(H(s,t))=H(D_H s,t)+H(s,D_Ht)$,
\item[(ii)] ${\rm d}(H(s,t))={\rm d}\big(\bar s^tft\big)={\rm d}\bar s^t\cdot f\cdot t+\bar s^t\cdot {\rm d}f\cdot t+\bar s^t\cdot f\cdot {\rm d}t=H({\rm d}s,t)+\bar s^t\cdot {\rm d}f\cdot t+H(s,{\rm d}t)$,
\item[(iii)] ${\rm d}=D_H+\Psi_H$ (lifted version).
\end{enumerate}
Combining (i), (ii) and (iii), we get
\begin{gather*} H(\Psi_Hs,t)+\bar s^t{\rm d}f t+H(s,\Psi_H t)=0\end{gather*} and hence $\Psi_H=-\frac{1}{2}f^{-1}{\rm d}f$.
\end{proof}

\subsubsection{From flat bundles to Higgs bundles}
Given a Hermitian metric $H$ on a flat bundle $(E,D)$, we have
\begin{align}
D&= D_H+\Psi_H \qquad \text{unitary + Herm}\nonumber\\
&= D_H^{1,0}+ D_H^{0,1}+\Psi_H^{1,0}+\Psi_H^{0,1}\qquad\quad\text{by type of form}.\label{DecompositionFour}
\end{align}

Given a Hermitian metric $H$ on a Higgs bundle $(E, \bar\partial_E, \phi$), we can construct a new connection~$D$ on~$E$ as
\begin{gather}\label{PlusThree}
D=\nabla_{\bar\partial_E, H}+\phi+\phi^{*_H},
\end{gather}
where $\nabla_{\bar\partial_E, H}$ is the Chern connection determined by $\bar\partial_E$ and $H$.

\begin{Lemma}\label{NameHarmonic}
A harmonic metric $H$ on a flat bundle $(E, D)$ over $\Sigma$ implies that the triple $\big(E, D_H^{0,1}, \Psi_H^{1,0}\big)$ obtained in equation~\eqref{DecompositionFour} is a Higgs bundle. Conversely, given a Higgs bundle $(E, \bar\partial_{E}, \phi)$ together with a Hermitian metric~$H$ such that the new connection $D=\nabla_{\bar\partial_E, H}+\phi+\phi^{*_H}$ in equation~\eqref{PlusThree} is flat, then the metric~$H$ is harmonic on the flat bundle $(E, D)$.
\end{Lemma}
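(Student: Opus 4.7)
My plan is to run both directions of the lemma by expanding the curvature $F_D$ under the decomposition $D = D_H + \Psi_H$ (unitary + self-adjoint), exploiting the dimensional collapse $\Omega^{2,0} = 0 = \Omega^{0,2}$ on a Riemann surface. Set $\bar\partial_E := D_H^{0,1}$, which automatically defines a holomorphic structure since $(D_H^{0,1})^2$ lands in $\Omega^{0,2} = 0$, and $\phi := \Psi_H^{1,0}$. Reconciling the paper's self-adjointness of $\Psi_H$ with the ``augmented'' adjoint $\phi^{*_H}$ of the Note (which swaps $dz \leftrightarrow d\bar z$ together with taking the endomorphism adjoint) identifies $\Psi_H = \phi + \phi^{*_H}$. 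After dropping all vanishing $(2,0)$- and $(0,2)$-pieces, the flatness and harmonicity conditions reduce to two $(1,1)$-valued equations:
\begin{gather*}
F_{D_H} + \bar\partial_E\phi + D_H^{1,0}\phi^{*_H} + [\phi\wedge\phi^{*_H}] = 0, \qquad D_H^{1,0}\phi^{*_H} = \bar\partial_E\phi.
\end{gather*}
The second comes from $\star\Psi_H = i\phi^{*_H} - i\phi$ (per the Note) and the vanishing of the $(2,0)$- and $(0,2)$-parts of $D_H\phi$ and $D_H\phi^{*_H}$.

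The key structural step is to split the $(1,1)$-valued flatness equation by the Hermiticity type of the $\omega$-coefficient of each summand. A direct computation (using $dz\wedge d\bar z = -i\omega/g_0$) shows that $F_{D_H}$ has skew-Hermitian $\omega$-coefficient since $D_H$ is unitary; that $[\phi\wedge\phi^{*_H}]$ likewise, because $[A, A^{*_H}]$ is Hermitian and the extra factor of $-i$ flips the type; and that $\bar\partial_E\phi + D_H^{1,0}\phi^{*_H}$ has Hermitian $\omega$-coefficient. The last claim relies on the identity $(\bar\partial_E\phi)^{*_H} = D_H^{1,0}\phi^{*_H}$, which I would derive as a general consequence of the Chern-connection property~(\ref{HolomorphicChern}) promoted to endomorphism-valued forms. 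Uniqueness of the Hermitian/skew-Hermitian decomposition of $\operatorname{End}(E)$ then splits $F_D = 0$ into
\begin{gather*}
F_{D_H} + [\phi\wedge\phi^{*_H}] = 0, \qquad \bar\partial_E\phi + D_H^{1,0}\phi^{*_H} = 0.
\end{gather*}

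For the forward direction, combining the second of these with the harmonicity relation $D_H^{1,0}\phi^{*_H} = \bar\partial_E\phi$ yields $2\bar\partial_E\phi = 0$, hence $\phi$ is holomorphic; together with $\det E \cong \mathcal{O}$ and $\operatorname{tr}\phi = 0$ (inherited from the ${\rm SL}(n,\mathbb C)$-hypothesis on $D$), this produces the desired ${\rm SL}(n,\mathbb C)$-Higgs bundle $(E,\bar\partial_E, \phi)$. For the converse direction, uniqueness of $D = D_H + \Psi_H$ identifies $D_H = \nabla_{\bar\partial_E, H}$ and $\Psi_H = \phi + \phi^{*_H}$; since $\bar\partial_E\phi = 0$ by the Higgs-bundle hypothesis, the same Chern-connection identity forces $D_H^{1,0}\phi^{*_H} = (\bar\partial_E\phi)^{*_H} = 0$, so $D_H(\star\Psi_H) = i(D_H^{1,0}\phi^{*_H} - \bar\partial_E\phi) = 0$, which is harmonicity. (The flatness hypothesis on $D$ is used only to make ``$H$ harmonic on the flat bundle $(E,D)$'' a well-posed statement.)

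The main obstacle will be establishing the general identity $(\bar\partial_E\phi)^{*_H} = D_H^{1,0}\phi^{*_H}$ as a consequence of the Chern-connection property, independent of flatness or harmonicity; this is what powers the clean Hermitian/skew-Hermitian splitting that isolates $\bar\partial_E\phi = 0$. A closely related point is to carefully reconcile the two distinct adjoint operations in play -- the $\psi^*$ of (\ref{dual}), which leaves the form part unchanged, versus the augmented adjoint $\phi^{*_H}$ of the Note and of standard Higgs conventions, which conjugates the form -- in order to verify that $\Psi_H = \phi + \phi^{*_H}$ really is the self-adjoint summand of the decomposition.
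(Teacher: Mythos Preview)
Your proposal is correct and follows essentially the same route as the paper: expand $F_D=0$ as $F_{D_H}+D_H\Psi_H+\Psi_H\wedge\Psi_H=0$, split by Hermitian/skew-Hermitian type to obtain $F_{D_H}+[\phi,\phi^{*_H}]=0$ and $\bar\partial_E\phi+D_H^{1,0}\phi^{*_H}=0$, then combine the latter with the harmonicity relation $D_H^{1,0}\phi^{*_H}=\bar\partial_E\phi$ to conclude $\bar\partial_E\phi=0$ (and run the converse from $\bar\partial_E\phi=0$). You are simply more explicit than the paper about \emph{why} the splitting works---your identity $(\bar\partial_E\phi)^{*_H}=D_H^{1,0}\phi^{*_H}$ is just the unitarity of $D_H$ rephrased for $\operatorname{End}(E)$-valued forms, so it is not an additional obstacle but rather the content of ``$D_H$ is the Chern connection for $(\bar\partial_E,H)$'', which the paper invokes without unpacking.
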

\begin{proof} The proof is purely algebraic.
\begin{enumerate}\itemsep=0pt
\item[(1)] $D_H(*\Psi_H)=0$ (harmonicity) and
\item[(2)] $F_D=0$ (flatness) implies
\begin{enumerate}\itemsep=0pt
\item[(2a)] $F_{D_H}+\Psi_H\wedge\Psi_H=0$ and
\item[(2b)] $D_H\Psi_H=0$.
\end{enumerate}
\end{enumerate}
One can check that (1) and (2b) together imply
\begin{enumerate}\itemsep=0pt
\item[(3)] $(D_H)^{0,1}\Psi_H^{1,0}=0$.
\end{enumerate}
Note that (3) and the choice of $D_H$ imply that $D_H$ is the Chern connection determined by the holomorphic structure~$D_H^{0,1}$ and the Hermitian metric~$H$.

Using the same algebra calculation, one can show that the converse is also true.
\end{proof}

Therefore, we may rephrase the theorem of Corlette ad Donaldson as follows.

\begin{Theorem}\label{FlatConnectionHiggsBundle}
Given $D$ a flat irreducible ${\rm SL}(n,\mathbb C)$-connection on a vector bundle $E$ over $\Sigma$ satisfying $\det E=O$, there exists a unique $($up to a scalar multiple$)$ Hermitian metric $H$ such that $\big(E, D_H^{0,1}, \Psi_H^{1,0}\big)$ is a ${\rm SL}(n,\mathbb C)$-Higgs bundle on~$\Sigma$.
\end{Theorem}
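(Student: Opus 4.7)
The plan is to combine the Corlette--Donaldson existence theorem with Lemma~\ref{NameHarmonic}, and then to verify the two ${\rm SL}(n,\mathbb C)$-normalizations $\det E = \mathcal{O}$ and $\operatorname{tr}\phi = 0$ by tracing through the canonical decomposition $D = D_H + \Psi_H$.

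First, since $D$ is irreducible it is in particular reductive, so Corlette--Donaldson produces a harmonic Hermitian metric $H$ on $E$ with $\det H = 1$, unique by irreducibility. Applying Lemma~\ref{NameHarmonic} to this $H$ on the flat bundle $(E,D)$ yields that $(E, D_H^{0,1}, \Psi_H^{1,0})$ is a Higgs bundle: $D_H^{0,1}$ is a holomorphic structure on $E$, and the identity $(D_H)^{0,1}\Psi_H^{1,0} = 0$ established in the proof of that lemma is precisely the holomorphicity of $\phi := \Psi_H^{1,0}$ as an $\operatorname{End}(E)\otimes K$-valued section.

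To verify the ${\rm SL}(n,\mathbb C)$ conditions I would push the decomposition $D = D_H + \Psi_H$ forward to $\det E$. Because $D$ is an ${\rm SL}(n,\mathbb C)$-connection, the induced connection on $\det E$ is $\mathrm{d}$; because $\det H = 1$, the Chern connection on $(\det E, \det H)$ is also $\mathrm{d}$. Uniqueness of the splitting of a connection into a unitary plus a self-adjoint part on the Hermitian line bundle $(\det E, \det H)$ then forces $\operatorname{tr}(\Psi_H) = 0$. Decomposing by type gives $\operatorname{tr}\phi = \operatorname{tr}\Psi_H^{1,0} = 0$. Similarly, the induced holomorphic structure on $\det E$ coming from $D_H^{0,1}$ is the $(0,1)$-part of $\mathrm{d}$, namely $\bar\partial$, so $\det E \cong \mathcal{O}$ as holomorphic line bundles. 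Uniqueness of the resulting ${\rm SL}(n,\mathbb C)$-Higgs bundle is inherited directly from the uniqueness of~$H$ in Corlette--Donaldson.

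There is no genuine analytic obstacle at this stage; all the heavy lifting sits inside Corlette--Donaldson and Lemma~\ref{NameHarmonic}. The only point that needs care is the bookkeeping step: tracking how the ${\rm SL}(n,\mathbb C)$-condition on $D$ and the normalization $\det H = 1$ combine, via the unitary-plus-Hermitian decomposition, to produce both $\det E = \mathcal{O}$ and the tracelessness of $\phi$ simultaneously.
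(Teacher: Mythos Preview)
Your proposal is correct and is exactly the argument the paper intends: the theorem is introduced there with the words ``Therefore, we may rephrase the theorem of Corlette and Donaldson as follows,'' i.e.\ it is obtained by feeding the harmonic metric from Corlette--Donaldson into Lemma~\ref{NameHarmonic}. The paper leaves the ${\rm SL}(n,\mathbb C)$ bookkeeping ($\det E=\mathcal O$ and $\operatorname{tr}\phi=0$) implicit, so your explicit verification via the induced decomposition on $\det E$ is a welcome addition rather than a deviation.
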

\begin{Definition}\quad
\begin{enumerate}\itemsep=0pt
\item[(1)] A Higgs bundle $(E,\phi)$ of degree $0$ is stable if for every proper $\phi$-invariant holomorphic subbundle $F$ has a negative degree.
\item[(2)] A ${\rm SL}(n, \mathbb C)$-Higgs bundle $(E,\phi)$ is polystable if it is a direct sum of stable Higgs bundles of degree~$0$.
\end{enumerate}
\end{Definition}
The Higgs bundle obtained in Theorem~\ref{FlatConnectionHiggsBundle} is polystable, see the proof in Lemma~\ref{NonCyclic}.

\subsubsection{From flat bundles to harmonic maps}
Therefore, we may rephrase the theorem of Corlette and Donaldson as follows.

\begin{Theorem}\label{FlatConnectionHiggsBundle2}
Given $D$ a flat irreducible ${\rm SL}(n,\mathbb C)$-connection on a vector bundle $E$ over $\Sigma$ with its holonomy ``representation" $\rho\colon \pi_1(S)\rightarrow {\rm SL}(n, \mathbb C)$, there exists a unique $\rho$-equivariant harmonic map $f\colon \widetilde\Sigma\rightarrow {\rm SL}(n,\mathbb C)/{\rm SU}(n)$.
\end{Theorem}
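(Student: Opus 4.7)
The plan is to invoke the Corlette--Donaldson theorem to produce the harmonic metric $H$, then transport it into a harmonic map via the dictionary between $\rho$-equivariant Hermitian metrics of unit determinant on $\widetilde\Sigma\times\mathbb{C}^n$ and $\rho$-equivariant maps $\widetilde\Sigma\to N$ established in the subsection ``Equivalence between harmonic metric and harmonic map''. Since $(E,D)$ is constructed as the flat bundle $\widetilde{S}\times_\rho\mathbb{C}^n$ associated to $\rho$, and since $D$ is irreducible by hypothesis, the Corlette--Donaldson theorem furnishes a unique Hermitian metric $H$ on $E$ satisfying $\det H=1$ and the harmonicity condition $D_H(\star\Psi_H)=0$.

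Next, I would use that a unit-determinant $\rho$-equivariant Hermitian metric on $\widetilde{S}\times\mathbb{C}^n$ is the same data as a $\rho$-equivariant map $f\colon\widetilde\Sigma\to N$, where $N=\{A\in M_n(\mathbb{C})\mid \bar A^t=A,\ \det A=1,\ A>0\}$ is the model for ${\rm SL}(n,\mathbb C)/{\rm SU}(n)$ described in the paper. Concretely, $f$ is defined by $H_m(s,t)=\bar s^t f(m)\,t$, and the $\rho$-equivariance $f(\gamma\cdot m)=\overline{\rho(\gamma)^{-1}}^{\,t}f(m)\rho(\gamma)^{-1}$ is exactly the transformation law forced by the requirement that $H$ descends to a metric on the quotient flat bundle.

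I would then convert harmonicity of $H$ into harmonicity of $f$ using the two facts already recorded: Lemma~\ref{TangentHiggs} identifies $\Psi_H=-\tfrac12 f^{-1}df$, and the identity $E(f)=n\cdot E(H)$ relates the two energies. A variation of $H$ through unit-determinant Hermitian metrics corresponds bijectively to a variation of $f$ through $\rho$-equivariant maps into $N$, and under this correspondence the energy functionals differ only by the multiplicative constant $n$. Consequently $H$ is a critical point of $E(H)$ if and only if $f$ is a critical point of $E(f)$, so the harmonic metric produced by Corlette--Donaldson yields a $\rho$-equivariant harmonic map into $N\cong {\rm SL}(n,\mathbb C)/{\rm SU}(n)$.

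For uniqueness, the bijection $H\leftrightarrow f$ is completely canonical, so any $\rho$-equivariant harmonic map $f$ produces a harmonic metric $H$ on $(E,D)$ with $\det H=1$; the uniqueness clause in the Corlette--Donaldson theorem (for irreducible $D$) then forces $H$, and hence $f$, to be unique. The only nontrivial analytic input is the existence/uniqueness of $H$ itself, which is taken as given from Corlette and Donaldson; the remaining step is formal and amounts to checking that the dictionary of the previous subsection is tautological once one accepts the formula $\Psi_H=-\tfrac12 f^{-1}df$ and the metric formula \eqref{Metric} on $N$. The main conceptual point to verify carefully is that equivariance of $H$ under $\rho$ translates into equivariance of $f$ in the sense of Definition~\ref{Harmonic}'s surrounding discussion, but this is a direct unwinding of definitions.
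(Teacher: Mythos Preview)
Your proposal is correct and follows exactly the approach the paper takes: the paper states this theorem explicitly as a rephrasing of Corlette--Donaldson, relying on the dictionary between harmonic metrics and $\rho$-equivariant harmonic maps (via Lemma~\ref{TangentHiggs} and the identity $E(f)=n\cdot E(H)$) that was set up in the subsection ``Equivalence between harmonic metric and harmonic map''. There is no separate proof in the paper beyond that reduction, and you have spelled out precisely the intended argument.
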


\subsubsection {From Higgs bundles to flat bundles}
\begin{Theorem}[Hitchin \cite{Hitchin87}, Simpson \cite{Simpson88}]
Let $(E, \phi)$ be a polystable ${\rm SL}(n, \mathbb C)$-Higgs bundle, then there exists a Hermitian metric $H$ on $E$ whose induced Hermitian metric $\det H$ on \mbox{$\det E\cong \mathcal O$} is~$1$, and such that
\begin{gather*}D=\nabla_{\bar\partial_E, H}+\phi+\phi^{*_H}\end{gather*} is flat, where $\nabla_{\bar\partial_E, H}$ is the Chern connection uniquely determined by $H$ and $\bar\partial_E$, and $\phi^{*_H}$ is the Hermitian adjoint of $\phi$.

If $(E,\phi)$ is stable, the metric is unique.
\end{Theorem}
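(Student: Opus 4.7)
The plan is to reduce flatness of $D$ to a single nonlinear second-order elliptic equation on $H$ (the Hitchin equation), to obtain a solution via a Donaldson-type heat flow whose asymptotic behavior is controlled by the polystability hypothesis, and to deduce uniqueness from convexity of the associated functional along geodesics in the space of Hermitian metrics.

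First I would reduce $D^2=0$ to the Hitchin equation. Since $\Sigma$ is a Riemann surface, $\Omega^{2,0}$ and $\Omega^{0,2}$ both vanish, so $F_D$ is purely of type $(1,1)$. Expanding $F_D=(\nabla_{\bar\partial_E,H}+\phi+\phi^{*_H})^2$ and using $\phi\wedge\phi=0=\phi^{*_H}\wedge\phi^{*_H}$, the holomorphicity $\bar\partial_E\phi=0$, and the relation $(\nabla_{\bar\partial_E,H}\phi)^{*_H}=\nabla_{\bar\partial_E,H}\phi^{*_H}$ coming from unitarity of the Chern connection, everything collapses to the single \emph{Hitchin equation}
\[
F_{\nabla_{\bar\partial_E,H}}+[\phi,\phi^{*_H}]=0.
\]
The task is therefore to produce a unit-determinant Hermitian metric $H$ satisfying it.

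For existence I would follow the Donaldson/Simpson method. Fix a reference metric $H_0$ with $\det H_0=1$ and parametrize $H=H_0\cdot h$ with $h$ a positive self-adjoint automorphism of $(E,H_0)$ of unit determinant. Define a Donaldson-type functional on such $h$ whose Euler--Lagrange equation is Hitchin's equation, and evolve $h$ by the associated parabolic flow
\[
h^{-1}\partial_t h=-2\mathrm{i}\Lambda\bigl(F_{\nabla_{\bar\partial_E,H}}+[\phi,\phi^{*_H}]\bigr).
\]
Short-time existence is standard parabolic theory, and a maximum-principle estimate on $\operatorname{tr}h$ (where $\Lambda[\phi,\phi^{*_H}]$ contributes with a favorable sign) yields long-time existence. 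The decisive step is to show that polystability forces $\sup_t\|h_t\|$ to stay uniformly bounded, so that a subsequence converges in a Sobolev topology to a solution of the Hitchin equation. If instead $\|h_t\|$ were to blow up, the Uhlenbeck--Yau blow-up analysis, adapted by Simpson to the Higgs setting, would extract from weak limits of $h_t/\|h_t\|$ a nonzero $\phi$-invariant weakly holomorphic subsheaf $F\subset E$ with $\deg F\geq 0$, contradicting polystability. I expect this analytic-to-algebraic passage --- promoting weak-limit data of a divergent flow into a genuinely coherent, $\phi$-invariant destabilizing sub-Higgs sheaf with the correct degree bound --- to be the main obstacle and the actual content of the theorem.

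For uniqueness in the stable case I would compare two solutions $H_1,H_2$ along the geodesic $H_t=H_1\exp(ts)$, where $s=\log(H_1^{-1}H_2)$, in the symmetric space of Hermitian metrics. A Bochner-type computation shows the Donaldson functional is convex along this path, with strict convexity unless $s$ is $\nabla_{\bar\partial_E,H}$-parallel and commutes with $\phi$; in that degenerate case the eigenspace decomposition of $s$ yields a nontrivial $\phi$-invariant holomorphic splitting of $(E,\phi)$, contradicting stability. Hence $s=0$ and $H_1=H_2$.
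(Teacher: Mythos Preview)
The paper does not prove this theorem at all; it is stated as a black box with attribution to Hitchin and Simpson, and the introduction explicitly says the survey ``does not mention at all the proofs of big theorems, such as the theorem of Hitchin and Simpson.'' So there is no proof in the paper to compare your proposal against.

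That said, your sketch is a faithful outline of the Donaldson--Simpson approach actually used in the references: reduce flatness to the Hitchin equation (the paper does record this reduction), run a heat flow on the space of Hermitian metrics, and use the stability hypothesis via an Uhlenbeck--Yau / Simpson blow-up argument to rule out divergence, with uniqueness coming from convexity of the Donaldson functional along geodesics. You have correctly identified the hard analytic step (extracting a destabilizing $\phi$-invariant subsheaf from a diverging flow) as the real content. If anything, your uniqueness argument could be phrased more simply in the stable case: two harmonic metrics differ by a self-adjoint gauge transformation that is $\nabla$-parallel and commutes with $\phi$, and stability forces this to be a scalar; with the determinant normalized to $1$ the scalar is $1$.
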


\begin{Remark}From Lemma \ref{NameHarmonic}, we can see that a harmonic metric on a Higgs bundle is indeed a harmonic metric on the associated flat bundle.
\end{Remark}
The connection $D$ here is a reductive ${\rm SL}(n,\mathbb C)$-connection and one may refer to~\cite{Wentworth} for the proof.

We note that the connection $D=\nabla_{\bar\partial_E, H}+\phi+\phi^{*_H}$ being flat is equivalent to the \textit{Hitchin equation}
\begin{gather}\label{HitchinEquation}
F_{\nabla_{\bar\partial_E, H}}+[\phi,\phi^{*_H}]=0,\end{gather}
where $F_{\nabla_{\bar\partial_E, H}}$ is the curvature of the Chern connection $\nabla_{\bar\partial_E, H}$ and the Lie bracket $[\phi,\phi^{*_H}]$ is defined as follows:

First take $\phi\wedge \phi^{*_H}\in\Omega^{1,1}(\Sigma, \operatorname{End}(E)\otimes \operatorname{End}(E))$ and then apply the generalized Lie bracket on the tensor product $\operatorname{End}(E)\otimes \operatorname{End}(E)$. Using this definition, one can check \begin{gather}\label{LieBracket}
[\phi,\phi^{*_H}]=\phi\wedge\phi^{*_H}+\phi^{*_H}\wedge\phi,\end{gather}
where the $\wedge$ operator in equation (\ref{LieBracket}) means doing wedge product on forms and composition on sections of $\operatorname{End}(E)$ at the same time, different from the one in the beginning of this paragraph.

\begin{Definition}\quad
\begin{enumerate}\itemsep=0pt
\item[(1)] The space of gauge equivalence classes of polystable ${\rm SL}(n,\mathbb C)$-Higgs bundles is called the moduli space of ${\rm SL}(n,\mathbb C)$-Higgs bundles and we denote it by $\mathcal{M}_{\rm Higgs}({\rm SL}(n,\mathbb C))$.
\item[(2)] The space of gauge equivalence classes of reductive flat ${\rm SL}(n,\mathbb C)$-connections is called the de Rham moduli space and we denote it by $\mathcal{M}_{\rm deRham}({\rm SL}(n,\mathbb C))$.
\item[(3)] The space of conjugacy classes of reductive representations from $\pi_1(S)$ into ${\rm SL}(n,\mathbb C)$ is called the representation variety and we denote it by $\operatorname{Rep}(\pi_1S, {\rm SL}(n,\mathbb C))$.
\item[(4)] The space of equivariant harmonic maps from $\widetilde\Sigma$ to $N$ modulo isometries in $N$ is denoted by $\mathcal {H}$.
\end{enumerate}
\end{Definition}

From the discussion in this section, we obtain a 1-1 correspondence \begin{align*}
\operatorname{NAH}_{\Sigma}\colon \ &\mathcal{M}_{\rm Higgs}({\rm SL}(n,\mathbb C))\cong \mathcal{H}\cong \mathcal{M}_{\rm deRham}({\rm SL}(n,\mathbb C))\cong \operatorname{Rep}(\pi_1(S), {\rm SL}(n,\mathbb C))\\
& (E,\phi)\longmapsto \big(f\colon \widetilde\Sigma\rightarrow N\big)\longmapsto D\longmapsto\text{the holonomy of $D$}.
\end{align*} This is called the non-abelian Hodge correspondence.

\begin{Remark} One can generalize the non-abelian Hodge correspondence to general real reductive Lie groups, see \cite{GGM}. For a general Lie group $G$, we consider equivariant harmonic maps from~$\widetilde\Sigma$ to the symmetric space $G/K$, where $K$ is a maximal subgroup of $G$ (unique up to conjugacy). In later sections, we'll directly mention $G$-Higgs bundles without more explanation.
\end{Remark}

\begin{Remark} If a reductive representation $\rho$ of $\pi_1(S)$ into ${\rm SL}(n,\mathbb C)$ has image inside a proper subgroup~$G$ of ${\rm SL}(n,\mathbb C)$, the corresponding $\rho$-equivariant harmonic map will lie in the totally geodesic submanifold~$G/K$ inside~$N$ where~$K$ is a maximal compact subgroup of~$G$.
\end{Remark}

\section[Several concepts in $\mathcal M_{\rm Higgs}({\rm SL}(n,\mathbb C))$]{Several concepts in $\boldsymbol{\mathcal M_{\rm Higgs}({\rm SL}(n,\mathbb C))}$}\label{Two}
In this section, we introduce several important concepts for $\mathcal M_{\rm Higgs}({\rm SL}(n,\mathbb C))$: the Hitchin fibration, the Hitchin section, the $\mathbb C^*$-action, the Morse function, cyclic Higgs bundles and a~discussion of stability.

\subsection{Hitchin fibration}
Given a basis of ${\rm SL}(n,\mathbb C)$-invariant homogeneous polynomials $p_i$ of degree $i$ on $\mathfrak{sl}(n,\mathbb C)$, $2\leq i\leq n$, the Hitchin fibration is a map from the moduli space of ${\rm SL}(n,\mathbb{C})$-Higgs bundles over $\Sigma$ to the direct sum of holomorphic differentials
\begin{align*}
h\colon \ M_{\rm Higgs}({\rm SL}(n,\mathbb C)) &\longrightarrow \bigoplus\limits_{j=2}^nH^0\big(\Sigma, K^j\big),\\
(E,\phi)&\longmapsto (p_2(\phi),\dots,p_n(\phi)).\end{align*}
We call each fiber of the Hitchin fibration a Hitchin fiber. The Hitchin fiber over the origin is called the nilpotent cone.
\begin{Remark}\label{HitchinfibrationHopf}Note that $p_2(\phi)$ is always a constant multiple of $\operatorname{tr}\big(\phi^2\big)$. Hence the first term of the image $h(E,\phi)$ of the Hitchin fibration coincides with the Hopf differential of the associated harmonic map $f\colon \widetilde\Sigma\rightarrow N$ up to a scalar multiple, see Section~\ref{HopfDifferentialSection}.
\end{Remark}

\subsection{Hitchin section}
By choosing an appropriate basis of polynomials $p_i$'s, the Hitchin section $s$ of the Hitchin fibration can be defined explicitly as follows. Denote by $K^{\frac{1}{2}}$ a holomorphic line bundle such that its square is the canonical line bundle $K$. Define \begin{gather}
s(q_2,q_3,\dots,q_n)\nonumber\\
\qquad {}=\left(E=K^{\frac{n-1}{2}}\oplus K^{\frac{n-3}{2}}\oplus\cdots\oplus K^{\frac{1-n}{2}},\,\phi=\begin{pmatrix}
0&q_2&q_3&\cdots&q_n\\
r_1&0&q_2&\cdots&q_{n-1}\\
&r_2&0&\ddots&\vdots\\
&&\ddots&\ddots&q_2\\
&&&r_{n-1}&0\end{pmatrix}\right),\label{HitchinSection}\end{gather} where $r_i=\frac{i(n-i)}{2}$ for $1\leq i\leq n-1$.

Hitchin in \cite{Hitchin92} showed that the Higgs bundles in the image of Hitchin section have holonomy in ${\rm SL}(n,\mathbb R)$. Moreover, the corresponding representations form a connected component of the ${\rm SL}(n,\mathbb R)$-representation variety, called the Hitchin component and denoted by $\operatorname{Hit}_n$. The Hitchin component also descends to a connected component in the ${\rm PSL}(n,\mathbb R)$-representation variety and is also called the Hitchin component. Labourie in \cite{LabourieAnosov} showed that Hitchin representations are Anosov and hence they are discrete, faithful quasi-isometric embeddings of $\pi_1(S)$ into ${\rm PSL}(n,\mathbb R)$.

When $n=2$, the Higgs bundles in the Hitchin section form exactly the Higgs bundle parametrization of the \textit{Teichm\"uller space}, the space of isotopy classes of hyperbolic metrics on the surface $S$. We will see more details on this in Section~\ref{ExampleRank2}. The corresponding representations of $\pi_1(S)$ into ${\rm PSL}(2,\mathbb R)$ are Fuchsian, that is, discrete and faithful.

The image $s(q_2,0,\dots,0)$ corresponds to an embedding of the Teichm\"uller space inside the Hitchin section. Each representation corresponding to $s(q_2,0,\dots,0)$ for some $q_2$ is a Fuchsian representation post-composing with the unique irreducible representation from ${\rm PSL}(2,\mathbb R)$ to ${\rm PSL}(n,\mathbb R)$, called \textit{an $n$-Fuchsian representation}.

\begin{Remark}One can also define Hitchin representations for split real Lie groups, see Hit\-chin~\cite{Hitchin92}.
\end{Remark}

\subsection{Maximal representations}
For a reductive representation $\rho$ into ${\rm Sp}(2n,\mathbb R)$, we can define the Toledo invariant \begin{gather*}\tau(\rho):=\frac{2}{\pi}\int_Sf^*\omega,\end{gather*} where $f\colon \widetilde{S}\rightarrow {\rm Sp}(2n,\mathbb R)/{\rm U}(n)$ is any $\rho$-equivariant continuous map and $\omega$ is the normalized ${\rm Sp}(2n,\mathbb R)$-invariant K\"ahler $2$-form on ${\rm Sp}(2n,\mathbb R)/{\rm U}(n)$. The Toledo invariant satisfies the Milnor--Wood inequality $|\tau(\rho)|\leq n(g-1)$ shown in \cite{Burger1}. A representation $\rho$ with $|\tau(\rho)|=n(g-1)$ is called maximal. Corresponding to the representations of~$\pi_1(S)$ into ${\rm Sp}(2n,\mathbb R)$, the Higgs bundles over $\Sigma$ are of the form $\left(V\oplus V^*,\left(\begin{smallmatrix}0&\beta\\ \gamma&0\end{smallmatrix}\right)\right)$ where $V$ is a rank $n$ holomorphic vector bundle over~$\Sigma$, $\beta\in H^0\big(S^2V\otimes K_{\Sigma}\big)$ and $\gamma\in H^0\big(\Sigma,S^2V^*\otimes K_{\Sigma}\big)$. The integer $\deg V$ is the Toledo invariant of the representation and hence for a maximal ${\rm Sp}(2n,\mathbb R)$-representation, the corresponding Higgs bundle has~$|\deg V|=n(g-1)$.

Maximal representations are Anosov \cite{Burger} and hence they are discrete, faithful quasi-isometric embeddings of $\pi_1(S)$ into ${\rm Sp}(2n,\mathbb R)$.

For ${\rm Sp}(4,\mathbb R)$, there are $3\cdot 2^{2g}+2g-4$ connected components of maximal representations containing~$2^{2g}$ Hitchin components~\cite{Hitchin92} and $2g-3$ exceptional components called Gothen components~\cite{Gothen}. With the description in~\cite{BradlowDeformation, Gothen}, any maximal representation into ${\rm Sp}(4,\mathbb{R})$ in the Gothen components and the Hitchin components corresponds to a Higgs bundle of the form
\begin{gather*}
E=N\oplus NK^{-1}\oplus N^{-1}K\oplus N^{-1}, \qquad \phi=
\left(
\begin{matrix}
0 &q_2 &0& \nu\\
1 &0 & 0&0\\
0&\mu &0 &q_2 \\
0&0 & 1 & 0
\end{matrix}
\right),
\end{gather*} where $N$ is a holomorphic line bundle over $\Sigma$ satisfying $g-1<\deg N\leq 3g-3$, $q_2\in H^0\big(\Sigma, K^2\big)$, $\mu\in H^0\big(\Sigma, N^{-2}K^3\big)$, and $\nu\in H^0\big(\Sigma, N^2K\big)$.
Note that if $N=K^{\frac{3}{2}}$, the above Higgs bundle lies in the Hitchin section.

\begin{Remark}One can also consider maximal representations in general Hermitian type groups, see \cite{Burger1}.
\end{Remark}

\subsection[The $\mathbb C^*$-action]{The $\boldsymbol{\mathbb C^*}$-action}
There is a natural $\mathbb{C}^*$-action on the moduli space of ${\rm SL}(n,\mathbb{C})$-Higgs bundles:
\begin{align*}
\mathbb{C}^*\times \mathcal{M}_{\rm Higgs}({\rm SL}(n,\mathbb C))&\longrightarrow \mathcal{M}_{\rm Higgs}({\rm SL}(n,\mathbb C)),\\
t\cdot [(E,\phi)]&= [(E,t\phi)].\end{align*}

The $\mathbb C^*$-action takes the Hitchin fiber at $(q_2,\dots, q_n)$ to the Hitchin fiber at $\big(t^2q_2,\dots, t^nq_n\big)$. So the $\mathbb C^*$-action always takes Higgs bundles in a Hitchin fiber to another distinct Hitchin fiber unless the Higgs bundles are in the nilpotent cone.

\subsection{The Morse function}\label{Morse}
We can define a nonnegative function $f\colon \mathcal{M}_{\rm Higgs}({\rm SL}(n,\mathbb C))\rightarrow \mathbb R$ by
\begin{gather*}
f([E,\phi])=\int_\Sigma ||\phi||^2\operatorname{dvol}_{g_0}=i\int_\Sigma \operatorname{tr}(\phi\wedge\phi^*).
\end{gather*}
In fact, $f$ is a Morse-Bott function on the smooth locus of $\mathcal M_{\rm Higgs}({\rm SL}(n,\mathbb C))$. Moreover, the critical points of $f$ are exactly the fixed points of the $\mathbb C^*$-action on the moduli space. Hitchin in~\cite{Hitchin87} showed that the function is proper, which makes it an important tool to study the topology of the moduli space.

\subsection{Cyclic Higgs bundles}\label{Three}
\begin{Definition}A cyclic Higgs bundle $(E,\phi)$ over $\Sigma$ is a ${\rm SL}(n,\mathbb C)$-Higgs bundle of the form
\begin{eqnarray}\label{CyclicForm}E=L_1\oplus L_2\oplus\cdots\oplus L_n,\qquad\phi=\begin{pmatrix}&&&&\gamma_n\\\gamma_1&&&&\\&\gamma_2&&&\\&&\ddots&&\\
&&&\gamma_{n-1}&\end{pmatrix},\end{eqnarray}
where $L_i$'s are holomorphic line bundles, $\gamma_i\in H^0\big(\Sigma,L_i^{-1}L_{i+1}K\big)$ and for $1\leq i\leq n-1$, $\gamma_i\neq 0$.

We call a cyclic Higgs bundle real if $L_i=L_{n+1-i}^{-1}$ for $1\leq i\leq n$ and $\gamma_i=\gamma_{n-i}$ for $1\leq i\leq n-1$.
\end{Definition}

We note that cyclic Higgs bundles always lie in the Hitchin fiber at $(0,\dots,0, q_n)$. The following lemma is the main reason why cyclic Higgs bundles are particularly interesting.
\begin{Lemma}[Baraglia \cite{Bar}]\label{diagonal} Stable cyclic Higgs bundles have diagonal harmonic metrics.
\end{Lemma}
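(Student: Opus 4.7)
The plan is to exploit a $\mathbb{Z}_n$-symmetry of the cyclic Higgs bundle and combine it with the uniqueness part of the theorem of Hitchin--Simpson. Concretely, set $\zeta = e^{2\pi i/n}$ and consider the constant, diagonal bundle automorphism
\[
g = \operatorname{diag}\bigl(1,\zeta,\zeta^2,\dots,\zeta^{n-1}\bigr) \in \operatorname{Aut}(E) = \operatorname{Aut}\bigl(L_1\oplus\cdots\oplus L_n\bigr).
\]
A direct computation with the cyclic shape (\ref{CyclicForm}) gives $g\,\phi\, g^{-1} = \zeta^{-1}\phi$, so $g$ is an isomorphism of Higgs bundles $(E,\phi) \xrightarrow{\sim} (E,\zeta^{-1}\phi)$.

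Next, I would observe that the Hitchin equation (\ref{HitchinEquation}) is invariant under rescaling $\phi$ by any unit complex number: indeed, $(\zeta^{-1}\phi)^{*_H} = \bar\zeta^{-1}\phi^{*_H}$ and $|\zeta|=1$ give $[\zeta^{-1}\phi,(\zeta^{-1}\phi)^{*_H}] = [\phi,\phi^{*_H}]$. Hence if $H$ is the harmonic metric on the stable cyclic Higgs bundle $(E,\phi)$, then $H$ is also the harmonic metric on $(E,\zeta^{-1}\phi)$. Pulling back along the Higgs-bundle isomorphism $g$ and invoking the uniqueness clause in the Hitchin--Simpson theorem (up to a positive scalar, since the diagonal gauge $g$ need not lie in $\operatorname{SL}$), I get
\[
g^{*}H\,g = c\,H \qquad\text{for some } c>0,
\]
where I view $H$ as a Hermitian matrix with blocks $H_{ij}\in\Gamma(L_j\otimes\bar L_i)$ in the decomposition $E=\bigoplus L_i$.

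Finally I would compute both sides entrywise: since $g$ is diagonal with entries $\zeta^{i-1}$, one has $(g^{*}Hg)_{ij} = \bar\zeta^{\,i-1}\zeta^{\,j-1}H_{ij} = \zeta^{\,j-i}H_{ij}$, so the previous identity reads $\zeta^{\,j-i}H_{ij} = c\,H_{ij}$. Looking at $i=j$ and using that the diagonal blocks $H_{ii}$ are pointwise positive (hence nonzero), I conclude $c=1$. For $i\ne j$ with $1\le i,j\le n$ the factor $\zeta^{j-i}\ne 1$, and the identity forces $H_{ij}\equiv 0$. Therefore $H$ is diagonal with respect to the splitting $E=L_1\oplus\cdots\oplus L_n$, as claimed.

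The only mildly delicate point is the scalar ambiguity $c$: since $g\notin\operatorname{SL}(n,\mathbb C)$ in general (for $n$ even $\det g\ne 1$), one cannot directly invoke ``uniqueness of the harmonic metric with $\det H=1$'', and must instead phrase uniqueness up to a positive scalar and then pin down $c=1$ from the positivity of $H_{ii}$. Everything else is a routine manipulation of the Hitchin equation under rescaling $\phi$ and of block matrices under a constant diagonal gauge.
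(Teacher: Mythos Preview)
Your proof is correct and is essentially the paper's argument: conjugate $\phi$ by the diagonal gauge $g=\operatorname{diag}(1,\zeta,\dots,\zeta^{n-1})$ to obtain a unimodular multiple of $\phi$, then use uniqueness of the harmonic metric on a stable Higgs bundle to force $g$-invariance of $H$, hence diagonality. Two harmless wrinkles: the conjugation actually yields $g\phi g^{-1}=\zeta\phi$ rather than $\zeta^{-1}\phi$, and your scalar-ambiguity concern is unnecessary since $|\det g|=1$ guarantees $\det(g^{*}Hg)=1$, so uniqueness applies on the nose and $c=1$ is automatic.
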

\begin{proof}Consider the gauge transformation $g=\operatorname{diag}\big(1,\omega,\dots,\omega^{n-1}\big)$, where $\omega={\rm e}^{\frac{2\pi{\rm i}}{n}}$. Since
\begin{gather*}
g\phi g^{-1}=\begin{pmatrix}&&&&\omega^{1-n}\gamma_n\\
\omega\gamma_1&&&&\\
&\omega\gamma_2&&&\\
&&\ddots&&\\
&&&\omega\gamma_{n-1}&\end{pmatrix}=\omega\cdot \phi,
\end{gather*} we have $g\cdot (E,\phi)=(E,\omega\cdot \phi)$.
If $H$ solves the Hitchin equation of $(E,\phi)$, then $g\cdot H=\big(\bar g^t\big)^{-1} Hg^{-1}$ solves the Hitchin equation of $g\cdot (E,\phi)=(E,\omega\phi)$. We can see that $H$ also solves the Hitchin equation of $(E,\omega\phi)$. By the uniqueness of a harmonic metric, $g\cdot H=H$ and $\big(\bar g^t\big)^{-1} Hg^{-1}=H$. Hence~$H$ is diagonal.
\end{proof}

Using the uniqueness of a harmonic metric and a similar method in Lemma~\ref{diagonal}, one can show the following result and we leave it as an exercise.
\begin{Exercise}\label{CyclicReal}
Stable cyclic real Higgs bundles have diagonal harmonic metrics $H=(h_1,\dots, h_n)$ satisfying $h_i=h_{n+1-i}^{-1}$ for $1\leq i\leq n$.
\end{Exercise}

\begin{Example}\quad
\begin{enumerate}\itemsep=0pt
\item Inside $\mathcal M_{\rm Higgs}({\rm SL}(n,\mathbb C))$, every Higgs bundle in the image $s(0,\dots,0,q_n)$ of the Hitchin section is cyclic.
\item Inside $\mathcal M_{\rm Higgs}({\rm SL}(2,\mathbb C))$, every Higgs bundle in the Hitchin section is cyclic.
\item Inside $\mathcal M_{\rm Higgs}({\rm SL}(4,\mathbb C))$, every Higgs bundle in the Hitchin fiber at $(0,0,q_4)$ which corresponds to a representation in a Gothen component for ${\rm Sp}(4,\mathbb R)$ is cyclic.
\end{enumerate}
\end{Example}

 \subsection{Stability}
We give some examples of stable Higgs bundles.
\begin{Proposition}\label{Stable}\quad
\begin{enumerate}\itemsep=0pt
\item[$(1)$] Every Higgs bundle in the Hitchin section is stable.
\item[$(2)$] Every Higgs bundle in the Gothen component for ${\rm Sp}(4,\mathbb R)$ is stable.
\item[$(3)$] If a cyclic Higgs bundle $(E,\phi)$ of the form \eqref{CyclicForm} satisfies $\sum\limits_{i=1}^k\deg L_{n+1-i}<0$ for each $1\leq k\leq n-1$, then $(E,\phi)$ is stable.
\end{enumerate}
\end{Proposition}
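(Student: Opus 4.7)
For part (3), the plan is to show $\deg F \le \sum_{i=1}^{k} \deg L_{n+1-i}$ for every proper nonzero $\phi$-invariant subbundle $F\subset E$ of rank $k$; the hypothesis then yields $\deg F<0$. The key structural input is that the projections $p_i\colon F\hookrightarrow E\to L_i$ satisfy
\[
 p_{i}\circ\phi|_F \;=\; \gamma_{i-1}\,p_{i-1}
\qquad(\text{indices mod }n),
\]
coming from $\phi(s_i)=\gamma_i s_{i+1}$ in the cyclic form. I would exhibit the sheaf map
\[
\Phi\;:=\;(p_{n-k+1},p_{n-k+2},\dots,p_n)\colon\ F\;\longrightarrow\;L_{n-k+1}\oplus\cdots\oplus L_n
\]
between rank-$k$ bundles, prove it is injective, and take determinants: $\det\Phi$ is then a nonzero line-bundle map $\det F\to L_{n-k+1}\otimes\cdots\otimes L_n$, forcing $\deg F\le \sum_{i=n-k+1}^{n}\deg L_i = \sum_{i=1}^{k}\deg L_{n+1-i}$.

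Injectivity of $\Phi$ will split into two cases depending on whether $q_n:=\gamma_1\cdots\gamma_n\in H^0(K^n)$ vanishes identically. If $\gamma_n\equiv 0$, then $\phi$ is regular nilpotent on each fiber and the unique $k$-dimensional $\phi$-invariant subspace at a generic point is $\ker\phi^k_x=(L_{n-k+1}\oplus\cdots\oplus L_n)_x$, so $F=L_{n-k+1}\oplus\cdots\oplus L_n$ as subbundles and $\Phi$ is the identity. If $\gamma_n\not\equiv 0$, at a generic point $\phi$ has $n$ distinct eigenvalues (the $n$th roots of $q_n$), $F_x$ is spanned by $k$ of the corresponding eigenvectors, and a Vandermonde-type determinant on the last $k$ coordinates of these eigenvectors is nonzero; this gives $\Phi_x$ an isomorphism. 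Generic injectivity plus local freeness of $F$ then implies $\Phi$ is injective as a sheaf map. The Vandermonde step is the main technical point.

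For part (1), I would apply the analogous argument using the increasing filtration $G^{(i)}:=F\cap(L_1\oplus\cdots\oplus L_i)$. It is preserved by $\phi$ up to a twist by $K$, since the Hitchin-section $\phi$ has only the nonzero subdiagonal constants $r_i$ below the diagonal and the $q_j$'s strictly above, hence $\phi(L_1\oplus\cdots\oplus L_i)\subset(L_1\oplus\cdots\oplus L_{i+1})\otimes K$. The induced map $G^{(i)}/G^{(i-1)}\to G^{(i+1)}/G^{(i)}\otimes K$ is multiplication by the nonzero constant $r_i$, so the set of indices with nonzero graded piece forms a final segment $\{n-k+1,\dots,n\}$, yielding $\deg F\le (g-1)k(k-n)<0$ using $L_i=K^{(n+1-2i)/2}$. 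For part (2), I would treat the explicit $4\times 4$ Higgs bundle by enumerating potential $\phi$-invariant subbundles of rank $1$, $2$, and $3$, using the nonvanishing of the two $1$'s in the subdiagonal together with the bounds $g-1<\deg N\le 3g-3$ on the degrees of the summands $N,NK^{-1},N^{-1}K,N^{-1}$ and the constraints imposed by $q_2,\mu,\nu$; this case-by-case enumeration is the main obstacle for (2).
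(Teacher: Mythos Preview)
Your argument is correct, but the paper takes a different and shorter route. Rather than analysing $\phi$-invariant subbundles directly, the paper invokes two general facts: (i)~the $\mathbb C^*$-action preserves stability, and (ii)~stability is an open condition. Together these imply that if $\lim_{t\to 0}t\cdot[(E,\phi)]$ is stable then $(E,\phi)$ is stable. In all three cases the limit is the ``nilpotent'' Higgs bundle obtained by killing the top-right entry: for the Hitchin section one is left with only the subdiagonal constants $r_i$, for the Gothen bundles only the two $1$'s survive, and for a general cyclic bundle one gets the same $(L_i,\gamma_i)$ with $\gamma_n$ replaced by $0$. Stability of this limit is then exactly your Case~1 (the regular-nilpotent fibre), so the paper never needs your Vandermonde computation or the rank-by-rank enumeration for~(2).

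What your approach buys is that it is self-contained (no appeal to openness of stability in the moduli space or to the existence of the $\mathbb C^*$-limit) and it yields the explicit bound $\deg F\le\sum_{i=1}^{k}\deg L_{n+1-i}$ for \emph{every} $\phi$-invariant $F$, not merely the qualitative conclusion. The paper's approach is more uniform and avoids case analysis, at the cost of importing those two structural facts. One small point in your part~(1): the intersections $G^{(i)}=F\cap(L_1\oplus\cdots\oplus L_i)$ are a~priori only coherent subsheaves, but this is harmless since $G^{(i)}/G^{(i-1)}$ is a subsheaf of the line bundle $L_i$ and the degree inequality $\deg(G^{(i)}/G^{(i-1)})\le\deg L_i$ holds for any nonzero such subsheaf.
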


\textit{Sketch of the proof}. The followings two facts about stability: (i) The $\mathbb C^*$-action preserves stability; (ii) Stability is an open condition, prove that if $\lim\limits_{t\rightarrow 0}t\cdot [(E,\phi)]$ is stable, then $(E,\phi)$ is stable. Proposition~\ref{Stable} follows from directly checking the stability of $\lim\limits_{t\rightarrow 0}t\cdot [(E,\phi)]$.

\part{{Analysis and geometry on the Hitchin equation}}\label{Part2}
\section{Local expression of the Hitchin equation}
Consider a local coordinate chart $U$ of $\Sigma$ where the bundle $E$ has a local holomorphic trivia\-li\-zation over $U$ by choosing a local holomorphic frame $e=(e_1,e_2,\dots,e_n)$. We denote by $h$ the matrix presentation whose $(i,j)$-entry $h_{ij}$ is the pairing $H(e_i, e_j)$. For any two local sections $s=e\cdot \xi$, $t=e\cdot \eta$ of~$E$ over~$U$, where $\xi,\eta\in \Omega^0(U, \mathbb C^n)$, the pairing of $s$, $t$ is given by
\begin{gather}\label{MetricLocal}
H(s, t)={\bar\xi}^t\cdot h\cdot\eta.\end{gather}

We are going to write the Hitchin equation (\ref{HitchinEquation}) in terms of the local frame $e$. Let's first write the curvature $F_{\nabla_{\bar\partial_E, H}}$ and the term $[\phi,\phi^{*_H}]$ as follows:

$\bullet$ \textit{Curvature $F_{\nabla_{\bar\partial_E, H}}$}:

The Chern connection $\nabla_{\bar\partial_E, H}$ is
\begin{gather*}\nabla_{\bar\partial_E, H}=\nabla_{\bar\partial_E, H}^{1,0}+\nabla_{\bar\partial_E, H}^{0,1}=\nabla_{\bar\partial_E, H}^{1,0}+\bar\partial_E.\end{gather*} Locally the holomorphic structure $\bar\partial_E$ on $E$ is just $\bar\partial$. Let's first write the operator $\nabla_{\bar\partial_E, H}^{1,0}$ in local expression. Assume that $\nabla_{\bar\partial_E, H}^{1,0}e=e\cdot A$, for some $A\in \Omega^{1,0}(U,\operatorname{End}(\mathbb C^n))$ and the local expression of $\nabla_{\bar\partial_E, H}^{1,0}$ is $\nabla_{\bar\partial_E, H}^{1,0}=\partial+A$.
Recall equation~(\ref{HolomorphicChern}) as follows
\begin{gather}\label{HolomorphicChern1}\partial(H(s,t))=H(\bar\partial_E s,t)+H\big(s,\nabla_{\bar\partial_E, H}^{1,0} t\big).\end{gather}
Using equation (\ref{MetricLocal}) and the assumption that the frame $e$ is holomorphic, equation (\ref{HolomorphicChern1}) becomes
\begin{gather*}
\partial\big(\bar\xi^t\cdot h\cdot \eta\big)=H\big(e\cdot\bar\partial\xi, e\cdot\eta\big)+H\big(e\cdot\xi,\nabla_{\bar\partial_E, H}^{1,0}e\cdot \eta+e\cdot\partial\eta\big)\\
\qquad{} \Longrightarrow\quad\partial\bar\xi^t\cdot h\cdot \eta+\bar\xi^t\cdot \partial h\cdot \eta+\bar\xi^t\cdot h\cdot \partial \eta=\overline{\bar\partial\xi}^t\cdot h\cdot \eta+\bar\xi^t\cdot h\cdot A\eta+\bar\xi^t\cdot h\cdot\partial\eta.
\end{gather*}
This implies that $A=h^{-1}\partial h$. Therefore $\nabla_{\bar\partial_E, H}={\rm d}+A={\rm d}+h^{-1}\partial h$ and thus the curvature~$F_{\nabla_{\bar\partial_E, H}}$ is given by
\begin{gather}\label{CurvatureChern}F_{\nabla_{\bar\partial_E, H}}=\nabla_{\bar\partial_E, H}\circ \nabla_{\bar\partial_E, H}=({\rm d}+A)\circ({\rm d}+A)={\rm d}A+A\wedge A
=\bar\partial\big(h^{-1}\partial h\big).\end{gather}
In the case that $E$ is a line bundle and $h$ is a local function, the curvature $F_{\nabla_{\bar\partial_E, H}}$ is locally $\bar\partial\partial\log h$.

$\bullet$ \textit{The term $[\phi,\phi^{*_H}]$}:

For a local section $s=e\cdot \xi$ of $E$, set $\hat\phi, \hat\phi^{*_H}\in \Omega^0(U,\operatorname{End}(\mathbb C^n))$ such that \begin{gather*}\phi(s)=e\cdot\hat\phi \xi\cdot {\rm d}z,\qquad \phi^{*_H}(s)=e \cdot \hat\phi^{*_H}\xi\cdot {\rm d}\bar z.\end{gather*} Using the formula (\ref{LieBracket}) of $[\phi,\phi^{*_H}]$, the term $[\phi,\phi^{*_H}]$ is given by \begin{gather}\label{LieBracketLocal}
[\phi,\phi^{*_H}]=\big[\hat\phi, \hat\phi^{*_H}\big]{\rm d}z\wedge {\rm d}\bar z,\end{gather}
where the Lie bracket on the right hand is the usual Lie bracket for matrices.
The only remaining term to understand is $\hat\phi^{*_H}$. First, by definition, $\phi^{*_H}$ is such that $H(\phi(s),t)=H(s,\phi^{*_H}(t))$. Therefore we have
\begin{gather*}
H\big(e\cdot\hat\phi\xi\cdot {\rm d}z,e\cdot\eta\big)=H\big(e\cdot \xi,e\cdot \hat\phi^{*_H}\eta\cdot {\rm d}\bar z\big)\\
\qquad{} \Longrightarrow \quad \overline{\hat\phi\xi}^t\cdot h\cdot \eta=\bar\xi^t\cdot h\cdot \hat\phi^{*_H}\cdot\eta\quad \Longrightarrow \quad \hat\phi^{*_H}=h^{-1}\bar{\hat\phi}^th.
\end{gather*}

$\bullet$ \textit{The Hitchin equation}:

Combining equations (\ref{CurvatureChern}) and (\ref{LieBracketLocal}), the Hitchin equation (\ref{HitchinEquation}) is locally
\begin{gather}\label{LocalHitchinEquation}\bar\partial(h^{-1}\partial h)+\big[\hat\phi, \hat\phi^{*_H}\big]{\rm d}z\wedge {\rm d}\bar z=0,\end{gather}
where $ \hat\phi^{*_H}=h^{-1}\bar{\hat\phi}^th$.

\section{Harmonic maps in terms of Higgs bundles}
Let the Riemann surface $\Sigma$ be equipped with a background conformal metric $g_0$. Suppose we are given a polystable ${\rm SL}(n,\mathbb C)$-Higgs bundle $(E,\phi)$ over $\Sigma$ together with a harmonic metric~$H$, then we obtain a flat ${\rm SL}(n,\mathbb C)$-connection $D=\nabla_{\bar\partial_E, H}+\phi+\phi^{*_H}$ with its holonomy as $\rho\colon \pi_1(S)\rightarrow {\rm SL}(n,\mathbb C)$. Meanwhile, we obtain a $\rho$-equivariant harmonic map $f\colon \big(\widetilde S, \widetilde g_0\big)\rightarrow N\cong {\rm SL}(n,\mathbb C)/{\rm SU}(n)$. Now we discuss in this section the data of the harmonic map $f$ in terms of $(E,\phi, H)$ consisting of tangent vector, energy density, energy, Hopf differential and curvature.
\subsection{Tangent vector}
Using Lemma \ref{TangentHiggs}, $-\frac{1}{2}f^{-1}{\rm d}f=\Psi_H$. By decomposing into $(1,0)$- and $(0,1)$-forms, we have \begin{gather*}\Psi_H=\phi+\phi^{*_H}.\end{gather*}
Therefore $f^{-1}\partial f=-2\phi$, and $f^{-1}\bar\partial f=-2\phi^{*_H}$.

\subsection{Energy density and energy} Following from the formula~(\ref{EnergyDensity}), the energy density of $f$ is given by
\begin{gather*}e(f)=\tfrac{1}{2}\langle {\rm d}f,{\rm d}f\rangle =n\langle \Psi_H,\Psi_H\rangle =n\cdot \operatorname{tr}(\Psi_H\wedge\star\Psi_H)/\omega=2{\rm i}n\cdot\operatorname{tr}(\phi\wedge\phi^{*_H})/\omega,\end{gather*}
where we use that $\star\Psi_H=\star(\phi+\phi^{*_H})={\rm i}(\phi^{*_H}-\phi)$.

Following from the formula (\ref{Energy}), the energy of $f$ is given by \begin{gather*}
E(f)=\int_{S} e(f)\operatorname{dvol}_{g_0}=2{\rm i}n\int_{\Sigma}\operatorname{tr}(\phi\wedge\phi^{*_H}).\end{gather*} This is also the Morse function on the moduli space of Higgs bundles in Section~\ref{Morse}.

\subsection{Pullback metric} Following from the formula (\ref{Metric}), the pullback metric $f^*g_N$ is given by
\begin{align}
f^*g_N&= 2n\operatorname{tr}\big(\phi^2\big)+e(f)\cdot g_0+2n\overline{\operatorname{tr}\big(\phi^2\big)}\nonumber\\
&= 2n\cdot\big(\operatorname{tr}\big(\phi^2\big){\rm d}z^2+\operatorname{tr}(\phi\phi^{*_H})({\rm d}z\otimes {\rm d}\bar z+{\rm d}\bar z\otimes {\rm d}z)+\overline{\operatorname{tr}\big(\phi^2\big)}\big({\rm d}\bar z^2\big)\big).\label{PullbackMetric}\end{align}
\begin{Remark}\label{Immerse}
Note that the pullback metric is only a semi-positive symmetric $2$-tensor. From the above expression of the pullback metric, the pullback metric degenerates at $p$ when \begin{gather*}\operatorname{tr}(\phi\phi^{*_H})^2-\big|\operatorname{tr}\big(\phi^2\big)\big|^2=0,\qquad\text{at $p$}\end{gather*} equivalently, when \begin{gather*}\phi^*(p)=\lambda\cdot \phi(p),\qquad\text{for some $\lambda\in {\rm U}(1)$}. \end{gather*} Only when the map $f$ is an immersion, the pullback metric $f^*g_N$ is indeed a metric.
\end{Remark}
\begin{Remark}If $f$ is conformal and hence minimal, then $f^*g_N=2n\cdot\operatorname{tr}(\phi\phi^{*_H})({\rm d}z\otimes {\rm d}\bar z+{\rm d}\bar z\otimes {\rm d}z)$. Therefore~$f$ is a minimal immersion if and only if~$\phi$ does not vanish anywhere. In particular, an equivariant minimal mapping for Hitchin representations is automatically an immersion.
\end{Remark}

\subsection{Hopf Differential}\label{HopfDifferentialSection}The Hopf differential of a smooth map $f\colon \Sigma\rightarrow N$ is defined to be the $(2,0)$-part of the pullback metric $f^*g_N$, denoted by $\operatorname{Hopf}(f)$. A~map~$f$ is conformal if and only if $\operatorname{Hopf}(f)=0$. If a~map~$f$ is harmonic, then its Hopf differential $\operatorname{Hopf}(f)$ is holomorphic. From equation~(\ref{PullbackMetric}), the Hopf differential of the harmonic map $f$ is given by
\begin{gather*}
\operatorname{Hopf}(f)=({f^*g_N})^{2,0}=2n\cdot\operatorname{tr}\big(\phi^2\big),\end{gather*} which is a holomorphic quadratic differential over $\widetilde\Sigma$ and descends to $\Sigma$.

\subsection[Curvature of the pullback metric $f^*g_N$]{Curvature of the pullback metric $\boldsymbol{f^*g_N}$}

Denote by $\kappa$ the Gaussian curvature of the pullback metric $f^*g_N$ on~$\Sigma$. For a tangent plane $\sigma\subset T_{f(x)}N$ at $f(x)$ which is tangential to $f\big(\widetilde\Sigma\big)$, denote by $k_{\sigma}^N$ the sectional curvature of~$\sigma$ in~$N$. We then have the following proposition.
\begin{Proposition}\label{Reducing}
At every immersed point $x\in \widetilde\Sigma$, the following holds:
\begin{gather*}
\kappa\leq k^N_{\sigma}=-\frac{1}{2n}\cdot\frac{\operatorname{tr}([\phi,\phi^{*_H}]^2)}{\operatorname{tr}(\phi\phi^{*_H})^2-\big|\operatorname{tr}\big(\phi^2\big)\big|^2}\leq 0.
\end{gather*}
Moreover, the equality of the first inequality holds at $x$ if and only if the map $f$ is totally geodesic at~$x$.
\end{Proposition}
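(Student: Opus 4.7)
The plan is to establish the displayed chain as three separate pieces: the explicit formula for $k^N_\sigma$, the upper bound $k^N_\sigma\leq 0$, and the Gauss-equation inequality $\kappa\leq k^N_\sigma$ together with its equality case.

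For the explicit formula, I would compute $k^N_\sigma$ directly from the symmetric-space curvature formula~(\ref{CurvatureOnp}). Writing $f(x)=g\cdot {\rm Id}$, the differential $d(L_{g^{-1}}\circ\Psi^{-1})$ of~(\ref{PullbackVector}), combined with Lemma~\ref{TangentHiggs} and the decomposition $\Psi_H=\phi+\phi^{*_H}$, identifies the tangent frame $\{f_x,f_y\}$ of $\sigma$, pulled back to $\mathfrak p$, with $Y_1=\hat\phi+\hat\phi^{*_H}$ and $Y_2={\rm i}(\hat\phi-\hat\phi^{*_H})$, where $\phi=\hat\phi\,{\rm d}z$ locally. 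A short matrix manipulation gives $[Y_1,Y_2]=-2{\rm i}[\hat\phi,\hat\phi^{*_H}]$, and using $B=2n\operatorname{tr}$ together with $\operatorname{tr}((\hat\phi^{*_H})^2)=\overline{\operatorname{tr}(\hat\phi^2)}$ at the basepoint, one obtains
\[
B([Y_1,Y_2],[Y_1,Y_2])=-8n\operatorname{tr}\bigl([\hat\phi,\hat\phi^{*_H}]^2\bigr),
\]
while $B(Y_1,Y_1)B(Y_2,Y_2)-B(Y_1,Y_2)^2$ simplifies to $16n^2\bigl(\operatorname{tr}(\hat\phi\hat\phi^{*_H})^2-|\operatorname{tr}(\hat\phi^2)|^2\bigr)$. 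Substitution into~(\ref{CurvatureOnp}) yields the stated formula. The right-hand inequality $k^N_\sigma\leq 0$ is then immediate: $[Y_1,Y_2]\in[\mathfrak p,\mathfrak p]\subset\mathfrak k=\mathfrak{su}(n)$ is skew-Hermitian, so $\operatorname{tr}([Y_1,Y_2]^2)\leq 0$, and the denominator is positive precisely at immersed points by Remark~\ref{Immerse}.

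For the left inequality, I would apply the Gauss equation to the isometric immersion $f\colon(\Sigma,f^*g_N)\hookrightarrow N$, which at the immersed point $x$ reads
\[
\kappa=k^N_\sigma + \langle II(\tilde e_1,\tilde e_1),II(\tilde e_2,\tilde e_2)\rangle - \|II(\tilde e_1,\tilde e_2)\|^2
\]
for any $f^*g_N$-orthonormal frame $\{\tilde e_1,\tilde e_2\}$, where $II$ is the normal part of $\nabla {\rm d}f$. The key observation is that $II$, being the component orthogonal to $df(T\Sigma)$ inside $f^*TN$, depends only on $f$ and the ambient metric on $N$, not on the choice of metric on the domain. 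Hence harmonicity of $f$ with respect to $g_0$ (Definition~\ref{Harmonic}) gives, via the normal part of $\operatorname{tr}_{g_0}\nabla {\rm d}f=0$, the identity $II(e_1,e_1)+II(e_2,e_2)=0$ for any $g_0$-orthonormal frame $\{e_1,e_2\}$. Writing $\tilde e_i=A_{ij}e_j$ with $A^t(f^*g_N|_e)A=I$ and expanding bilinearly, the $\langle P,Q\rangle$ cross-term coefficient cancels and one is left with the clean identity
\[
\langle II(\tilde e_1,\tilde e_1),II(\tilde e_2,\tilde e_2)\rangle - \|II(\tilde e_1,\tilde e_2)\|^2 = -(\det A)^2\bigl(\|II(e_1,e_1)\|^2+\|II(e_1,e_2)\|^2\bigr),
\]
which is non-positive and vanishes iff $II$ itself vanishes at $x$, i.e.\ iff $f$ is totally geodesic at $x$.

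I expect the subtle point to be the third step: harmonicity is with respect to $g_0$, whereas the Gauss equation naturally uses the pullback metric $f^*g_N$, which in general is not conformal to $g_0$, so the ``harmonic'' and ``minimal-immersion'' conditions do not coincide. The resolution is to exploit the fact that only the normal part of $\nabla {\rm d}f$ enters the Gauss equation, and this normal part is independent of the choice of domain metric; the $g_0$-trace-free condition on $II$ is then the correct input, and the rest is the short linear-algebra identity above.
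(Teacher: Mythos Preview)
Your proposal is correct and follows essentially the same route as the paper: the formula for $k^N_\sigma$ is obtained by the same pullback of $f_x,f_y$ to $\mathfrak p$ via Lemma~\ref{TangentHiggs} and the same algebraic simplification, and the Gauss-equation step (harmonicity projected to the normal bundle gives a trace-free $II$, whence the correction term equals $-(\det A)^2(\|x\|^2+\|y\|^2)$ with equality iff $II\equiv 0$) is identical to the paper's computation. Your explicit observation that the normal component of $\nabla\,{\rm d}f$ is independent of the domain metric is left tacit in the paper but is precisely what justifies its passage from~(\ref{harmonic}) to~(\ref{harmonic1}).
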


The first inequality is proven in \cite[Lemma~C.4]{Reznikov}, \cite[Theorem~7]{Sampson}, and reproven in \cite[Lemma~2.5]{DominationFuchsian}. We include an argument here for its importance.

\begin{proof}For the first inequality:

Let $U\subset \widetilde\Sigma$ be a domain containing $x$ for $f$ being immersed everywhere. Let $e_1$, $e_2$ be an orthonormal basis of the induced metric at $f(x)\in N$. The Gauss formula for the curvature is
\begin{gather}\label{GaussFormula}
\kappa=k_{\sigma}^N+\langle II(e_1,e_1),II(e_2,e_2)\rangle -|II(e_1,e_2)|^2,
\end{gather}
where $II$ is the second fundamental form for the embedded image $f(U)$ defined by $II(X,Y)=(\nabla_XY)^{\perp}$ with respect to the tangent plane of~$f(U)$ and the Levi-Civita connection~$\nabla$ on~$N$.

Let $\sigma_1$, $\sigma_2$ be an orthonormal basis of $g_0$ at $x\in U$. By Definition~\ref{Harmonic}, the harmonicity of $f$ means
\begin{gather}\label{harmonic}\operatorname{tr}_{g_0}\nabla {\rm d}f=\nabla {\rm d}f(\sigma_1,\sigma_1)+\nabla {\rm d}f(\sigma_2,\sigma_2)=0,\end{gather} where $\nabla {\rm d}f(X,Y)=\nabla_X ({\rm d}f(Y))-{\rm d}f(\nabla_XY)$ is the second fundamental form of the map $f$. By projection to the normal bundle, equation (\ref{harmonic}) implies
\begin{gather}\label{harmonic1}
II({\rm d}f(\sigma_1),{\rm d}f(\sigma_1))+II({\rm d}f(\sigma_2),{\rm d}f(\sigma_2))=0.\end{gather}
Since $f$ is immersed at $x$, we have
\begin{eqnarray*}
e_1=a\cdot {\rm d}f(\sigma_1)+b\cdot {\rm d}f(\sigma_2),\qquad e_2=c\cdot {\rm d}f(\sigma_1)+{\rm d}\cdot {\rm d}f(\sigma_2).
\end{eqnarray*} where $ad-bc\neq 0$.
Using equation~(\ref{harmonic1}) and the symmetry of $II$, denote \begin{gather*}
x=II({\rm d}f(\sigma_1),{\rm d}f(\sigma_1))=-II({\rm d}f(\sigma_2),{\rm d}f(\sigma_2)),\\
 y=II({\rm d}f(\sigma_1),{\rm d}f(\sigma_2))=II({\rm d}f(\sigma_2),{\rm d}f(\sigma_1)).\end{gather*}So equation~(\ref{GaussFormula}) becomes
\begin{align*}
\kappa&= k_{\sigma}^N+\langle II(e_1,e_1),II(e_2,e_2)\rangle -|II(e_1,e_2)|^2\\
&= k_{\sigma}^N+\big\langle \big(a^2-b^2\big)x+2aby,\big(c^2-d^2\big)x+2cdy\big\rangle -|(ac-bd)x+(bc+ad)y|^2\\
&= k_{\sigma}^N-(ad-bc)^2\big(|x|^2+|y|^2\big)\leq k_{\sigma}^N.
\end{align*}
Equality holds if and only if $x=y=0$ since $ad-bc\neq 0$.

For the second equality:

At an immersed point $p$, the sectional curvature $k^N_{\sigma}$ of the tangent plane $\sigma$ at $f(p)\in N$ which is tangential to $f\big(\widetilde \Sigma\big)$ is given by: suppose $f(p)=\big(g^{-1}\big)^*g^{-1}$ for some $g\in {\rm SL}(n,\mathbb C)$,
\begin{align}\label{CurvatureForm}
k_{\sigma}^N&= K_{f(p)}(f_x,f_y)\\
&= K\big(A{\rm d}\big(g^{-1}\big)\big({-}\tfrac{1}{2}f(p)^{-1}f_x\big),A{\rm d}\big(g^{-1}\big)\big({-}\tfrac{1}{2}f(p)^{-1}f_y\big)\big)\label{equation1}\\
&= K\big({-}\tfrac{1}{2}f(p)^{-1}f_x, -\tfrac{1}{2}f(p)^{-1}f_y\big)\label{equation2}\\
&= K(\Psi_H(\partial_x), \Psi_H(\partial_y))\label{equation3}\\
&= \frac{1}{2n}\cdot\frac{\operatorname{tr}\big(\big[\hat\phi+\hat\phi^{*_H},{\rm i}\big(\hat\phi-\hat\phi^{*_H}\big)\big]^2\big)}{\operatorname{tr}\big(\hat\phi+\hat\phi^{*_H}\big)^2\cdot \operatorname{tr}\big(\hat\phi+\hat\phi^{*_H}\big)^2-\big(\operatorname{tr}\big(\hat\phi+\hat\phi^{*_H}\big)\big({\rm i}\big(\hat\phi-\hat\phi^{*_H}\big)\big)\big)^2}\label{equation4}\\
&= -\frac{1}{2n}\cdot\frac{\operatorname{tr}\big(\big[\hat\phi,\hat\phi^{*_H}\big]^2\big)}{\operatorname{tr}\big(\hat\phi\hat\phi^{*_H}\big)^2-\big|\operatorname{tr}\big(\hat\phi^2\big)\big|^2}.\label{equation5}
\end{align}
Here, equation (\ref{equation1}) follows from the formula (\ref{PullbackVector}); equation (\ref{equation3}) follows from the curvature formula (\ref{CurvatureOnp}) is invariant under adjoint action; equation~(\ref{equation2}) follows from Lemma~\ref{TangentHiggs}; equation~(\ref{equation4}) follows from $\Psi_H=\phi+\phi^{*_H}$, $B(X,Y)=2n\cdot \operatorname{tr}(XY)$ and the curvature formula (\ref{CurvatureOnp}); and equation (\ref{equation5}) is a direct calculation.

We obtain the last inequality by using that the sectional curvature of ${\rm SL}(n,\mathbb C)/{\rm SU}(n)$ is nonpositive which follows from the fact that the Killing form $B$ is negative definite on $su(n)$.
\end{proof}

\begin{Remark}
If $f$ is in addition conformal and hence minimal, then away from the zeros of $\phi$, the sectional curvature $k_{\sigma}^N$ is given by \begin{gather*}k_{\sigma}^N=-\frac{1}{2n}\cdot\frac{\operatorname{tr}\big([\phi,\phi^{*_H}]^2\big)}{\operatorname{tr}(\phi\phi^{*_H})^2}\leq 0.\end{gather*}
\end{Remark}

\section{Examples of different rank}

From now on, we choose $g_0$ to be the Hermitian hyperbolic metric on $\Sigma$. The metric $g_0$ is locally given by
\begin{gather*}g_0=g_0(z)({\rm d}z\otimes {\rm d}\bar z+{\rm d}\bar z\otimes {\rm d}z)=2g_0(z)\big({\rm d}x^2+{\rm d}y^2\big).\end{gather*} Since the local Gaussian curvature formula of $g_0$ is $K_{g_0}=-\frac{1}{g_0(z)}\partial_{\bar z}\partial_z\log g_0(z)$ and the Gaussian curvature of $g_0$ is $-1$, the local function $g_0(z)$ satisfies \begin{gather}\label{HyperbolicFormula}\partial_{\bar z}\partial_z\log g_0(z)=g_0(z).\end{gather} Note that the Hermitian metric $g_0$ on $\Sigma$ induces a Hermitian metric on $K_{\Sigma}^{-1}$, also denoted as $g_0$.

\subsection{Rank 2}\label{ExampleRank2}
Consider the Higgs bundle $\left(E=K^{\frac{1}{2}}\oplus K^{-\frac{1}{2}}, \phi=\left(\begin{smallmatrix}0&q_2\\1&0\end{smallmatrix}\right)\right)$, where $q_2\in H^0\big(\Sigma, K^2\big)$. From Proposition~\ref{Stable}, $(E,\phi)$ is stable. Clearly $(E,\phi)$ is cyclic and thus the harmonic metric splits on the holomorphic decomposition $E=K^{\frac{1}{2}}\oplus K^{-\frac{1}{2}}$ following from Lemma~\ref{diagonal}. Choose a local coordinate $z$ and a local holomorphic frame $e=(e_1,e_2)$, where $e_1={\rm d}z^{\frac{1}{2}}$ and $e_2={\rm d}z^{-\frac{1}{2}}$. Since $\phi(e_1)=e_2\cdot {\rm d}z$ and $\phi(e_2)=e_1\cdot q_2(z){\rm d}z$, we have
\begin{gather*}
\phi(e\cdot\begin{pmatrix}1\\0\end{pmatrix})=e\cdot\begin{pmatrix}0\\1\end{pmatrix}\cdot {\rm d}z, \qquad \phi(e\cdot\begin{pmatrix}0\\1\end{pmatrix})=e\cdot\begin{pmatrix}q_2(z)\\0\end{pmatrix}\cdot {\rm d}z.
\end{gather*}
So $\hat\phi=\left(\begin{smallmatrix}0&q_2(z)\\1&0\end{smallmatrix}\right)$. Let $h_1=H(e_1,e_1)$, $h_2=H(e_2,e_2)$ and hence $h=\operatorname{diag}(h_1, h_2)$. From $\det h=1$, we have $h_2=h_1^{-1}$. By direct calculations, we have
\begin{gather}
\bar\partial\big(h^{-1}\partial h\big)=\begin{pmatrix}\partial_{\bar z}\partial_z\log h_1&0\\0&-\partial_{\bar z}\partial_z\log h_1\end{pmatrix}\cdot {\rm d}\bar z\wedge {\rm d}z,\nonumber\\
\label{LieBracketRank2}
\big[\hat\phi, \hat\phi^{*_H}\big]=\begin{pmatrix}|q_2(z)|^2h_1^2-h_1^{-2}&0\\0&h_1^{-2}-|q_2(z)|^2h_1^2\end{pmatrix}.
\end{gather}
From equation (\ref{LocalHitchinEquation}), the Hitchin equation (\ref{HitchinEquation}) reduces to a single scalar equation
\begin{gather}\label{SingleHitchin}
\partial_{\bar z}\partial_z\log h_1+h_1^{-2}-|q_2(z)|^2h_1^2=0.
\end{gather}

Let $h_1=g_0(z)^{-\frac{1}{2}}{\rm e}^u$, where $u$ is a smooth function over $\Sigma$. Combining equations~(\ref{SingleHitchin}) and~(\ref{HyperbolicFormula}), we have
\begin{gather}\label{LocalSingleHitchin}
\partial_{\bar z}\partial_z u+g_0(z){\rm e}^{-2u}-|q_2(z)|^2{\rm e}^{2u}\cdot g_0(z)^{-1}-\tfrac{1}{2}g_0(z)=0.
\end{gather}
The operator $\frac{1}{g_0(z)}\partial_{\bar z}\partial_z$ and the function $|q_2(z)|^2g_0(z)^{-2}$ do not depend on the choice of coordina\-te~$z$, and are denoted by $\triangle_{g_0}$ and $||q_2||_{g_0}^2$ respectively. Hence equation~(\ref{LocalSingleHitchin}) becomes
\begin{gather}\label{HitchinScalar2}
\triangle_{g_0} u+{\rm e}^{-2u}-||q_2||_{g_0}^2{\rm e}^{2u}-\tfrac{1}{2}=0,
\end{gather}
which indeed holds globally.

\begin{Claim}\label{ClaimRank2} Suppose $q_2\neq 0$, then
(i) ${\rm e}^{-2u}> \frac{1}{2}$, (ii) $||q_2||_{g_0}^2{\rm e}^{4u}<1$.
\end{Claim}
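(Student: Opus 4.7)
The plan is to derive both inequalities from the maximum principle applied to the scalar Hitchin equation (\ref{HitchinScalar2}). Write its right-hand side as $f(u,z)=-e^{-2u}+||q_2||_{g_0}^2 e^{2u}+\tfrac12$, so the equation reads $\triangle_{g_0}u=f(u,z)$ and $\partial_u f=2e^{-2u}+2||q_2||_{g_0}^2 e^{2u}>0$, making $f$ strictly increasing in $u$.

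For (i), I would compare $u$ with the constant $u_0=\tfrac12\log 2$, chosen so that $-e^{-2u_0}+\tfrac12=0$ and hence $f(u_0,z)=2||q_2||_{g_0}^2\ge 0$. Setting $v=u-u_0$ and using the fundamental theorem of calculus in the $u$-variable,
\begin{gather*}
\triangle_{g_0}v-c(v,z)\,v=2||q_2||_{g_0}^2\ge 0, \qquad c(v,z):=\int_0^1\partial_u f(u_0+sv,z)\,{\rm d}s>0.
\end{gather*}
Since $\Sigma$ is compact and $c\ge 0$, Hopf's strong maximum principle applies: if $v$ attained a non-negative maximum it would have to be constant, whereupon $\triangle_{g_0}v=0$ would force $c\,v_{\max}+2||q_2||_{g_0}^2=0$ and hence (since $c>0$) both $v_{\max}=0$ and $q_2\equiv 0$, contradicting the hypothesis $q_2\ne 0$. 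Therefore $v<0$ everywhere, proving $e^{-2u}>\tfrac12$.

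For (ii), I would study $F:=||q_2||_{g_0}^2 e^{4u}$, continuous on $\Sigma$ and smooth on $U:=\{q_2\ne 0\}$. Since $q_2$ is holomorphic, $\partial_{\bar z}\partial_z\log|q_2|^2=0$ on $U$, and (\ref{HyperbolicFormula}) gives $\triangle_{g_0}\log g_0=1$; substituting (\ref{HitchinScalar2}) yields the clean identity
\begin{gather*}
\triangle_{g_0}\log F=4e^{-2u}(F-1)\qquad\text{on }U.
\end{gather*}
Because $\log F\to-\infty$ at each zero of $q_2$, the function $F$ attains its supremum on $\Sigma$ at some $p\in U$, and the weak maximum principle applied to $\log F$ at $p$ yields $F(p)\le 1$, hence $F\le 1$ on $\Sigma$. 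For the strict inequality, write $F-1=\log F\cdot g(\log F)$ with $g(w)=(e^w-1)/w>0$, so the identity becomes $(\triangle_{g_0}-d)\log F=0$ on $U$ with $d=4e^{-2u}g(\log F)>0$. Hopf's strong maximum principle on the connected open set $U$ then implies that $F(p)=1$ would force $F\equiv 1$ on $U$, making $u=-\tfrac14\log||q_2||_{g_0}^2$ unbounded near any zero of $q_2$, which exists because $\deg K^2=4g-4>0$ for $g\ge 2$; this contradicts smoothness of $u$ on $\Sigma$.

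The main obstacle is upgrading from the non-strict inequalities $v\le 0$ and $F\le 1$ (which come for free from the weak maximum principle) to the strict inequalities stated. For (i) this is handled by Hopf's principle together with the rigidity observation that equality would force $q_2\equiv 0$; for (ii) there is the additional wrinkle that $\log F$ is only smooth off the zero set of $q_2$, so the strong maximum principle argument on $U$ must be paired with the global smoothness of $u$ on all of $\Sigma$ to derive the contradiction.
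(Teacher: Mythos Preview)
Your proof is correct and follows essentially the same approach as the paper: both parts come from the maximum principle applied to the scalar equation~(\ref{HitchinScalar2}) and to the derived identity $\triangle_{g_0}\log F=4e^{-2u}(F-1)$. The only cosmetic difference is in~(i): the paper estimates directly at the maximum of~$u$ (where $\triangle_{g_0}u\le 0$ forces $e^{-2u}\ge\tfrac12+||q_2||_{g_0}^2e^{2u}\ge\tfrac12$, then invokes the strong maximum principle), whereas you repackage this as a comparison with the constant $u_0=\tfrac12\log 2$ via linearization---same content, slightly different bookkeeping. Your treatment of strictness in~(ii), spelling out why zeros of $q_2$ exist and why $F\equiv 1$ is incompatible with $u$ being globally smooth, is more detailed than the paper's terse ``by the strong maximum principle'' but arrives at the same conclusion.
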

Note that for the case $q_2=0$, ${\rm e}^{-2u}\equiv \frac{1}{2}$.
\begin{proof}
(i)~Estimate equation (\ref{HitchinScalar2}), at maximum of $u$, we have $\triangle_{g_0} u\leq 0$ and hence ${\rm e}^{-2u}-||q_2||_{g_0}^2{\rm e}^{2u}-\frac{1}{2}\geq 0$ and then ${\rm e}^{-2u}\geq \frac{1}{2}$. By the strong maximum principle (see \cite{MaximumPrinciple}), ${\rm e}^{-2u}>\frac{1}{2}$.
(ii)~Combining equations (\ref{HitchinScalar2}) and~(\ref{HyperbolicFormula}), we obtain the following equation of $||q_2||_{g_0}^2{\rm e}^{4u}$: away from zeros of $q_2$,
\begin{gather*}
\triangle_{g_0}\log \big(||q_2||_{g_0}^2{\rm e}^{4u}\big)=4{\rm e}^{-2u}\big(||q_2||_{g_0}^2{\rm e}^{4u}-1\big).
\end{gather*}
At maximum of $||q_2||_{g_0}^2{\rm e}^{4u}$, we have $||q_2||_{g_0}^2{\rm e}^{4u}-1\leq 0$. By the strong maximum principle, $||q_2||_{g_0}^2{\rm e}^{4u}-1<0$.
\end{proof}

Part (i) of Claim \ref{ClaimRank2} implies that the energy density $e(f)=4\big({\rm e}^{-2u}+||q_2||_{g_0}^2{\rm e}^{2u}\big)\geq 4{\rm e}^{-2u}>2$; and Part (ii) of Claim~\ref{ClaimRank2} implies that $[\phi,\phi^*]\neq 0$ using equation~(\ref{LieBracketRank2}).

\textbf{Geometric interpretation.} From the formula (\ref{PullbackMetric}), the pullback metric is given by \begin{gather*}f^*g_{{\rm SL}(2,\mathbb R)/{\rm SO}(2)}=8q_2{\rm d} z^2+4\big({\rm e}^{-2u}+||q_2||_{g_0}^2{\rm e}^{2u}\big)g_0+8\bar q_2{\rm d}\bar z^2.\end{gather*} From Remark~\ref{Immerse}. Part (ii) of Claim~\ref{ClaimRank2} says the corresponding equivariant harmonic map $f\colon \widetilde\Sigma\rightarrow {\rm SL}(2,\mathbb R)/{\rm SO}(2)$ is an immersion and hence is a diffeomorphism. It is shown by Hitchin that the pullback metric $f^*g_{{\rm SL}(2,\mathbb R)/{\rm SO}(2)}$ being of constant curvature $-\frac{1}{2}$ is equivalent to equation~(\ref{HitchinScalar2}) (Theorem~11.2 in Hitchin \cite{Hitchin87}). In fact, the Hitchin section in $\mathcal M_{\rm Higgs}({\rm SL}(2,\mathbb C))$ provides a parametrization of the Teichm\"uller space.

One may compare the above Higgs bundles parametrization with Wolf's parametrization of the Teichm\"uller space by the space of holomorphic quadratic differentials using harmonic maps in~\cite{TeichOfHarmonic}. For a hyperbolic metric~$h$ on~$S$, by Eells--Sampson~\cite{EellsSampson}, there exists a unique harmonic map~$f_h$ from $\Sigma$ to~$(S,h)$ isotopic to identity. In this way, we associate a holomorphic quadratic differential $\operatorname{Hopf}(f_h)$ to~$h$. Conversely, for a holomorphic quadratic differential $q_2$, one can find a hyperbolic metric~$h$ on~$S$ such that the unique harmonic map $f\colon \Sigma\rightarrow (S, h)$ isotopic to the identity has~$q_2$ as its Hopf differential. Using the harmonicity equation for $f$ to deduce a Bochner-type equation for the term $-\frac{1}{2}\log||\partial f||^2$, where $||\partial f||^2$ is the holomorphic energy density of~$f$, one can recover equation (\ref{HitchinScalar2}), see Schoen--Yau \cite{SchoenYauUnivalent}. So Part~(i) of Claim~\ref{ClaimRank2} implies that the holomorphic energy density is greater than~$2$.

\subsection{Rank 3}\label{ExampleRank3}
Consider the Higgs bundle $\left(E=K\oplus \mathcal O\oplus K^{-1}, \phi=\left(\begin{smallmatrix}0&0&q_3\\1&0&0\\0&1&0\end{smallmatrix}\right)\right)$, where $q_3\in H^0\big(\Sigma, K^3\big)$. Choose a local coordinate~$z$ and a local holomorphic frame $e=(e_1,e_2, e_3)$, where $e_1={\rm d}z$, $e_2=1$ and $e_2={\rm d}z^{-1}$. Hence $h=\operatorname{diag}\big(h_1, 1,h_1^{-1}\big)$ from Exercise~\ref{CyclicReal} and Proposition~\ref{Stable}. By direct calculations, the Hitchin equation~(\ref{HitchinEquation}) reduces to a single scalar equation
\begin{gather}\label{SingleHitchin3}
\partial_{\bar z}\partial_z\log h_1+h_1^{-1}-|q_3(z)|^2h_1^2=0.
\end{gather}
Let $h_1=g_0(z)^{-1}{\rm e}^u$, where $u$ is a smooth function over $\Sigma$. As in Example~\ref{ExampleRank2}, from equations~(\ref{SingleHitchin3}) and~(\ref{HyperbolicFormula}) it follows that
\begin{gather}\label{HitchinScalar3}
\triangle_{g_0} u+{\rm e}^{-u}-||q_3||_{g_0}^2{\rm e}^{2u}-1=0,
\end{gather}
where $||q_3||_{g_0}^2=|q_3(z)|^3g_0(z)^{-3}$.

Adapting the estimates in Example \ref{ExampleRank2}, we have the following claim.
\begin{Claim}\label{ClaimRank3} Suppose $q_3\neq 0$, then
(i) ${\rm e}^{-u}>1$, and (ii) $||q_3||_{g_0}^2{\rm e}^{3u}<1$.
\end{Claim}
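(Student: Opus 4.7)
The plan is to mirror the rank two argument of Claim \ref{ClaimRank2}, replacing the exponents $-2u, 2u, 4u$ by $-u, 2u, 3u$ and $q_2$ by $q_3$. Equation (\ref{HitchinScalar3}) has the same structural shape as (\ref{HitchinScalar2}) -- a semilinear elliptic equation with a decreasing and an increasing exponential nonlinearity -- so both the direct maximum principle argument (for (i)) and the auxiliary log-function argument (for (ii)) should carry over essentially unchanged.

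For (i), I would evaluate (\ref{HitchinScalar3}) at a maximum point $p$ of $u$. Since $\triangle_{g_0} u(p) \leq 0$ and $\|q_3\|_{g_0}^2 e^{2u} \geq 0$, this gives $e^{-u(p)} \geq 1 + \|q_3\|_{g_0}^2(p)\, e^{2u(p)} \geq 1$, hence $e^{-u} \geq 1$ everywhere. To upgrade to strict inequality, apply the strong maximum principle: setting $w = -u \geq 0$, equation (\ref{HitchinScalar3}) rearranges to $\triangle_{g_0} w = (e^w - 1) - \|q_3\|_{g_0}^2 e^{-2w}$, so writing $e^w - 1 = \psi(w)\, w$ with $\psi(w) = (e^w-1)/w > 0$ one obtains $\triangle_{g_0} w - \psi(w)\, w \leq 0$ with nonpositive zeroth-order coefficient. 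If $w$ vanished at any interior point, the strong minimum principle would force $w \equiv 0$, but then (\ref{HitchinScalar3}) would force $\|q_3\|_{g_0}^2 \equiv 0$, contradicting $q_3 \neq 0$.

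For (ii), I would derive an elliptic equation for $F := \log(\|q_3\|_{g_0}^2 e^{3u})$ on the open set $\Sigma \setminus \{q_3 = 0\}$. Since $\log|q_3(z)|^2$ is locally harmonic away from its zeros (it splits as a holomorphic plus an antiholomorphic function) and $\triangle_{g_0} \log g_0(z) = 1$ by (\ref{HyperbolicFormula}), combining with (\ref{HitchinScalar3}) should yield
\begin{equation*}
\triangle_{g_0} F = 3 e^{-u}\bigl(e^F - 1\bigr).
\end{equation*}
The function $F$ attains its maximum on $\Sigma$ either at an interior point of $\Sigma \setminus \{q_3 = 0\}$ (since $F \to -\infty$ near the zeros) or, if $q_3$ is nowhere vanishing, somewhere on the closed surface; at that point $\triangle_{g_0} F \leq 0$ forces $F \leq 0$, i.e. $\|q_3\|_{g_0}^2 e^{3u} \leq 1$. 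The strong maximum principle then promotes this to strict inequality unless $F \equiv 0$. To rule out the constant case, I would substitute $\|q_3\|_{g_0}^2 e^{2u} = e^{-u}$ into (\ref{HitchinScalar3}) to get $\triangle_{g_0} u \equiv 1$, which is incompatible with $\int_\Sigma \triangle_{g_0} u\, \operatorname{dvol}_{g_0} = 0$ on a closed surface. The main subtlety I anticipate is verifying the coefficient in the equation for $F$ (the contribution $-3 \triangle_{g_0} \log g_0(z) = -3$ must cancel correctly with the constant $+1$ in (\ref{HitchinScalar3}) after multiplication by $3$) and confirming that the zeros of $q_3$ do not obstruct the strong maximum principle, which they do not because $F \to -\infty$ there and the maximum is attained in the interior of the complement.
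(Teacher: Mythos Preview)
Your proposal is correct and follows precisely the approach the paper intends: the paper does not give a separate proof of Claim~\ref{ClaimRank3} but simply says ``Adapting the estimates in Example~\ref{ExampleRank2}, we have the following claim,'' and your argument is exactly that adaptation, with the auxiliary equation $\triangle_{g_0} F = 3e^{-u}(e^F-1)$ replacing the rank~2 equation $\triangle_{g_0}\log(\|q_2\|_{g_0}^2 e^{4u}) = 4e^{-2u}(\|q_2\|_{g_0}^2 e^{4u}-1)$. Your treatment is in fact more careful than the paper's rank~2 sketch---you spell out the strong maximum principle step for~(i) and explicitly rule out the constant case $F\equiv 0$ for~(ii) via $\int_\Sigma \triangle_{g_0} u\,\operatorname{dvol}_{g_0}=0$, points the paper leaves implicit.
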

Note that for the case $q_3=0$, ${\rm e}^{-u}\equiv 1$. Part~(i) of Claim~\ref{ClaimRank3} implies that the energy density $e(f)=6\cdot \big(2{\rm e}^{-u}+||q_3||_{g_0}^2{\rm e}^{2u}\big)\geq 12{\rm e}^{-u}>12$; and Part (ii) of Claim~\ref{ClaimRank3} implies that $[\phi,\phi^*]\neq 0$.

\textbf{Geometric interpretation}:

The corresponding equivariant harmonic map $f\colon \widetilde\Sigma\rightarrow {\rm SL}(3,\mathbb R)/{\rm SO}(3)$ is in fact minimal since its Hopf differential is $2n\cdot \operatorname{tr}\big(\phi^2\big)=0$. The minimal surface is closely related to the hyperbolic affine sphere in affine geometry. Let's roughly explain their relation as follows. One may check \cite{DumasWolf, LabourieFlat, Loftin, LoftinSurvey} for more details.

Consider a locally strictly convex hypersurface $M\subset\mathbb{R}^3$ where~$\mathbb R^3$ is equipped with a volume form. Affine differential geometry associates to such a locally convex surface a special transverse vector field, the affine normal. Being a hyperbolic affine sphere means that all the affine normals of each image point meet at a~point, which lies in a convex side of the hypersurface. Relative to the affine normal, there are two objects on~$M$: (1) the second fundamental form induces a Riemannian metric $h$ with conformal coordinate~$z$, called the \textit{Blaschke metric}; (2) a cubic differential $q_3{\rm d}z^3$, called the \textit{Pick differential}, measuring the difference between the induced connection of the hypersurface and the Levi-Civita connection with respect to~$h$. The Blaschke lift $f$ is a map from $M$ to the space $\operatorname{Met}\big(\mathbb R^3\big)$ of Euclidean metrics on $E$ of volume 1:
 $f\colon M\ni s\rightarrow f(s)\in \operatorname{Met}\big(\mathbb R^3\big)\cong {\rm SL}(3,\mathbb R)/{\rm SO}(3)$
 such that $f(s)(X,\lambda)=g_{s}(X,X)+\lambda^2$, where $\mathbb R^3$ is identified with $T_{s}M\oplus \mathbb R\cdot s$ and $g_{s}$ is the Blaschke metric on~$T_{s}M$. Then~$M$ is a hyperbolic affine sphere if and only if its Blaschke lift $f\colon M\rightarrow {\rm SL}(3,\mathbb R)/{\rm SO}(3)$ is a minimal surface.

For a representation $\rho\colon \pi_1(S)\rightarrow {\rm SL}(3,\mathbb R)$, we consider a hypersurface~$M\subset\mathbb R^3$ invariant under the action of the subgroup $\rho(\pi_1(S))$ of ${\rm SL}(3,\mathbb R)$. We can reparametrize the hypersurface~$M$ by a $\rho$-equivariant map $\iota\colon \widetilde\Sigma\rightarrow \mathbb{R}^3$ such that the induced Blaschke metric is conformal. Following Wang~\cite{Wang} and Simon--Wang \cite{SimonWang}, $M$ being the affine sphere is equivalent to the Pick differential~$q_3$ being holomorphic and the Blaschke metric $h=2g_0\cdot {\rm e}^{-u}$ where $(h, q_3)$ satisfies \textit{Wang's equation}, which is equivalent to the pair $(u, 2q_3)$ satisfying equation~(\ref{HitchinScalar3}). Note that both the Blaschke metric and the Pick differential descend to the Riemann surface $\Sigma$.

Part (i) in Claim \ref{ClaimRank3} says the Blaschke metric strictly dominates the conformal hyperbolic metric pointwise. Part~(ii) of Claim~\ref{ClaimRank3} says the curvature of the Blaschke metric is strictly negative.

\subsection[Rank $n$]{Rank $\boldsymbol{n}$}\label{ExampleRankN}
For stable cyclic Higgs bundles of the form (\ref{CyclicForm}), the harmonic metric is \begin{gather*}H=\operatorname{diag}(h_1,h_2,\dots,h_n)\end{gather*} from Lemma~\ref{diagonal}. Choose a local coordinate~$z$ and a local holomorphic frame~$e$. Let $h_i$, $\gamma_i$ also denote the Hermitian metric on each~$L_i$ and the holomorphic map~$\gamma_i$ with respect to the frame~$e$ respectively. So the Hitchin equation is locally
\begin{gather}
\partial_{\bar z}\partial_z\log h_1-\big(h_1h_n^{-1}|\gamma_n|^2-h_1^{-1}h_2|\gamma_1|^2\big)=0,\nonumber\\
\partial_{\bar z}\partial_z\log h_2-\big(h_1^{-1}h_2|\gamma_1|^2-h_2^{-1}h_3|\gamma_2|^2\big)=0,\nonumber\\
\cdots\cdots\cdots\cdots\cdots\cdots\cdots\cdots\cdots\cdots\cdots\cdots\cdots\cdots\cdots\nonumber\\
\partial_{\bar z}\partial_z\log h_n-\big(h_{n-1}^{-1}h_n|\gamma_{n-1}|^2-h_1h_n^{-1}|\gamma_n|^2\big)=0.\label{CyclicEquation}
\end{gather}

The above Hitchin equation coincides with the ``2-dimensional Toda equation with opposite sign'', which is a classical object in integrable systems, for example, see~\cite{Guest, LinGuest}. Baraglia in~\cite{Bar1} first introduced the notion of cyclic Higgs bundles and related them to the Toda equations.

Dai and Li in \cite{DaiLi2} studied the harmonic maps associated to cyclic Higgs bundle. There, the main tool for studying cyclic Higgs bundles is the following maximum principle for systems.
\begin{Lemma}[Dai--Li \cite{DaiLi2}]\label{MaximumPrinciple}
Let $(\Sigma,g)$ be a closed Riemannian surface. For each $1\leq i\leq n$, let~$u_i$ be a~$C^2$ function on $\Sigma{\setminus} P_i$, where~$P_i$ is an isolated subset of $\Sigma$ $(P_i$ can be empty$)$. Suppose~$u_i$ approaches $+\infty$ around~$P_i$. Let $P=\bigcup_{i=1}^{n} P_i$ and~$c_{ij}$ be bounded continuous functions on $\Sigma{\setminus} P$, $1\leq i,j\leq n$.
Suppose $c_{ij}$ satisfy the following assumptions: in $\Sigma{\setminus} P$,
\begin{enumerate}\itemsep=0pt
\item[$(a)$] cooperative: $c_{ij}\geq 0$, $i\neq j$,
\item[$(b)$] column diagonally dominant: $\sum\limits_{i=1}^{n}c_{ij}\leq 0$, $1\leq j\leq n$,
\item[$(c)$] fully coupled: the index set $\{1,\dots,n\}$ cannot be split up in two disjoint nonempty sets~$\alpha$,~$\beta$ such that $c_{ij}\equiv 0$ for $i\in\alpha$, $j\in \beta$.
\end{enumerate}
Let $f_i$ be non-positive continuous functions on $\Sigma{\setminus} P$, $1\leq i\leq n$ on $\Sigma$. Suppose $u_i$'s satisfy
\begin{gather*}
\triangle_{g} u_i+\sum\limits_{j=1}^{n}c_{ij}u_j=f_i\qquad \text{in} \quad \Sigma{\setminus} P, \quad 1\leq i\leq n.
\end{gather*}
Consider the following conditions:
\begin{enumerate}\itemsep=0pt
\item[$(1)$] $(f_1,\dots,f_n)\!\neq\! (0{,}\dots{,}0)$, i.e., there exists $i_{0}\!\in\! \{1{,}\dots{,} n\}$ and $p_0\!\in\! \Sigma{\setminus} P$ such that \mbox{$f_{i_0}(p_0)\!\neq\!0$};
\item[$(2)$] $P$ is nonempty;
\item[$(3)$] $\sum\limits_{i=1}^n u_i\geq 0$.
\end{enumerate}
Then either condition $(1)$ or $(2)$ imply that $u_{i}> 0$, $1\leq i\leq n$. And condition $(3)$ implies that either $u_{i}> 0$, $1\leq i\leq n$ or $u_{i}\equiv 0$, $1\leq i\leq n$.
\end{Lemma}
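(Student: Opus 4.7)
The plan is to reduce the system argument to the classical scalar Hopf strong maximum principle, in three stages: (i) show that $u_i \geq 0$ on $\Sigma \setminus P_i$ for every $i$; (ii) apply the scalar maximum principle to each equation separately to obtain the dichotomy that $u_i$ is either strictly positive on $\Sigma \setminus P$ or identically zero; and (iii) propagate the alternative across indices using the fully coupled hypothesis.

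Stages (ii) and (iii) are the more routine half. Column diagonal dominance together with cooperativity forces $c_{ii} \leq 0$ for each $i$, since $c_{ii} \leq -\sum_{k \neq i} c_{ki} \leq 0$. Rewriting the $i$-th equation as
\[
\triangle_g u_i + c_{ii} u_i = f_i - \sum_{j \neq i} c_{ij} u_j,
\]
the coefficient $c_{ii}$ is nonpositive and, once $u_j \geq 0$ has been established, the right-hand side is nonpositive. The classical Hopf strong maximum principle then yields the desired dichotomy for each $u_i$. For propagation, if some $u_{i_0} \equiv 0$, substituting into its equation gives $\sum_{j \neq i_0} c_{i_0 j} u_j = f_{i_0} \leq 0$; every summand on the left is nonnegative, so $c_{i_0 j} u_j \equiv 0$ for every $j \neq i_0$. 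Splitting indices into $\alpha = \{i : u_i \equiv 0\}$ and $\beta = \{i : u_i > 0\}$, this forces $c_{ij} \equiv 0$ on $\Sigma \setminus P$ whenever $i \in \alpha$ and $j \in \beta$, contradicting full coupling unless one of $\alpha, \beta$ is empty; that is exactly the dichotomy in conclusion (3). The all-zero branch is then ruled out by (1), which would otherwise force every $f_{i_0}$ to vanish, and by (2), which requires some $u_i \to +\infty$ at a puncture.

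The main obstacle is stage (i), establishing $u_i \geq 0$, because column rather than row diagonal dominance is not immediately suited to a pointwise componentwise minimum argument. My plan is to work with the sum $v = \sum_i u_i$, for which direct computation yields $\triangle_g v = \sum_i f_i + \sum_j a_j u_j$ with $a_j := -\sum_i c_{ij} \geq 0$, and to argue by contradiction at a point $p^*$ realizing a purported negative minimum: combining $\sum_i f_i \leq 0$ with $u_j(p^*) \geq \min_k u_k(p^*)$ and the cooperative signs forces equality in every inequality, which reduces to the degenerate configuration handled in stage (iii). Under hypothesis (3) the minimum of $v$ is already nonnegative by assumption, so one instead runs a componentwise minimum argument on the individual $u_i$ and extracts the equality case. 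A technical point is that the relevant minimum of $u_i$ must be attained in $\Sigma \setminus P$; near any puncture $p_0 \in P_j$, the blow-up $u_j \to +\infty$ in the $i$-th equation for $i \neq j$, combined with $C^2$-regularity of $u_i$ on $\Sigma \setminus P_i$, forces $c_{ij}(p_0) = 0$ and prevents $u_i$ from achieving its minimum at such a point, so the maximum principle is genuinely applied where the equation holds.
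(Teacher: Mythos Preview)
The paper does not actually prove this lemma; it is quoted from Dai--Li \cite{DaiLi2} and immediately applied in Claim~\ref{ClaimRankN}, with no argument given. So there is nothing in the present paper to compare your proposal against.

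Assessing the proposal on its own terms: stages (ii) and (iii) are essentially correct and standard once stage (i) is in hand. The substantive gap is stage (i), and the sketch you give does not close it. From the summed equation $\triangle_g v = \sum_i f_i + \sum_j a_j u_j$ with $a_j=-\sum_i c_{ij}\ge 0$, at a minimum point $p^*$ of $v$ you only get $\sum_j a_j(p^*)\,u_j(p^*)\ge 0$, which says nothing about the sign of any individual $u_j(p^*)$; I do not see how this ``forces equality in every inequality'' or reduces to the configuration of stage (iii). Your fallback for hypothesis (3), the ``componentwise minimum argument,'' is exactly the argument that needs \emph{row} diagonal dominance: if $u_k(p^*)=\min_{i,q}u_i(q)=m<0$, the $k$-th equation at $p^*$ gives $\sum_j c_{kj}(p^*)u_j(p^*)\le 0$, and bounding $u_j(p^*)\ge m$ with $c_{kj}\ge 0$ for $j\ne k$ leads only to $m\sum_j c_{kj}(p^*)\le 0$, which is vacuous without control of the row sum $\sum_j c_{kj}$. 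Column dominance controls $\sum_i c_{ij}$ instead, and the two are not interchangeable at a single point. (A side issue: the claim ``$c_{ij}(p_0)=0$'' at a puncture $p_0\in P_j$ is not well-posed, since the $c_{ij}$ are only assumed bounded and continuous on $\Sigma\setminus P$; boundedness of $c_{ij}$ together with $u_j\to+\infty$ does not by itself force anything about behaviour at $p_0$.)

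To complete the argument you need a device genuinely adapted to column dominance---for example, working not at a single minimum point but with the full index set $I=\{i:u_i(p^*)=m\}$ and summing only those equations, where column dominance does give $\sum_{i\in I}c_{ij}\le 0$ for $j\in I$ and $\sum_{i\in I}c_{ij}\ge 0$ for $j\notin I$; even then the inequalities must be organized carefully to reach a contradiction, and you should consult the original Dai--Li paper for the precise mechanism.
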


Here we give a quick application of the maximum principle Lemma~\ref{MaximumPrinciple} to the case of cyclic Higgs bundles in the Hitchin section as
\begin{gather*} E=K^{\frac{n-1}{2}}\oplus K^{\frac{n-3}{2}}\oplus\cdots\oplus K^{\frac{1-n}{2}},\qquad\phi=\begin{pmatrix}
0&&&&q_n\\
r_1&0&&&\\
&r_2&0&&\\
&&\ddots&\ddots&\\
&&&r_{n-1}&0\end{pmatrix},\end{gather*} where $r_i=\frac{i(n-i)}{2}\colon K^{\frac{n+1-2i}{2}}\rightarrow K^{\frac{n-1-2i}{2}}\otimes K$ for $1\leq i\leq n-1$.
 Suppose $n=2m$, the solution to the Hitchin equation is $h=\operatorname{diag}\big(h_1,\dots,h_m,h_m^{-1},\dots,h_1^{-1}\big)$, by Exercise~\ref{CyclicReal} and Proposition~\ref{Stable}.

\begin{Claim}[Dai--Li \cite{DaiLi2}]\label{ClaimRankN}
Suppose $q_n\neq 0$, then $||q_n||^2<||r_1||^2<||r_2||^2<\cdots<||r_m||^2$ where $||q_n||^2$, $||r_i||^2$ use the pairing using formula~\eqref{GeneralizedPairing}.
\end{Claim}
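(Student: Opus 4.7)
The plan is to rephrase the claim as positivity of $m$ functions $w_1,\dots,w_m$ on $\Sigma$, set up a coupled cooperative linear system for them, and apply the maximum principle Lemma~\ref{MaximumPrinciple}. Write $R_i=\|r_i\|^2$ for $1\le i\le n-1$ and $R_n=\|q_n\|^2$. Using the local form of the Hitchin equation \eqref{CyclicEquation} together with $\triangle_{g_0}\log g_0=1$ from \eqref{HyperbolicFormula}, one derives the cyclic Toda-like equation
\begin{gather*}
\triangle_{g_0}\mu_i=2R_i-R_{i-1}-R_{i+1}-1,\qquad 1\le i\le n,
\end{gather*}
where $\mu_i=\log R_i$ and indices are read modulo $n$. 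In the Hitchin section, Exercise~\ref{CyclicReal} gives $h_{n+1-i}=h_i^{-1}$, hence the pointwise symmetry $R_i=R_{n-i}$ for $1\le i\le n-1$; in particular $R_{m+1}=R_{m-1}$ and $R_{n-1}=R_1$, so the only independent quantities are $R_1,\dots,R_m,R_n$.

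Next I would set $w_i=\mu_{i+1}-\mu_i$ for $1\le i\le m-1$ and $w_m=\mu_1-\mu_n$, so the claim becomes $w_i>0$ for all $1\le i\le m$. The functions $w_1,\dots,w_{m-1}$ are smooth on $\Sigma$, while $w_m\to+\infty$ exactly at the zero set $P$ of $q_n$. Linearizing every difference by the identity $R_{k+1}-R_k=R_k\alpha_k w_k$ with $\alpha_k=(e^{w_k}-1)/w_k>0$ (and $R_1-R_n=R_n\alpha_m w_m$), a direct computation from the Toda equations converts $\triangle_{g_0}w_i$ into a linear system
\begin{gather*}
\triangle_{g_0}w_i+\sum_{j=1}^{m}c_{ij}w_j=0,\qquad 1\le i\le m.
\end{gather*}
The bookkeeping step is to verify that the off-diagonal coefficients are nonnegative (cooperativity) and the column sums are nonpositive (column diagonal dominance); here the symmetry $R_{m+1}=R_{m-1}$ produces the diagonal coefficient $c_{m-1,m-1}=-3R_{m-1}\alpha_{m-1}$ (rather than $-2R_{m-1}\alpha_{m-1}$), and analogously $c_{m,m}=-3R_n\alpha_m$, which supplies strict column dominance at $j=m-1$ and $j=m$. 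The coupling graph formed by the nonzero off-diagonal entries (the path through consecutive indices $1,2,\dots,m-1$, together with the edge between $1$ and $m$) joins all $m$ indices, giving full coupling.

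The last check is boundedness of the $c_{ij}$ on $\Sigma\setminus P$: the only delicate term is $R_n\alpha_m=(R_1-R_n)/w_m$, which tends to $0$ near $P$ because $w_m\to+\infty$ while $R_1-R_n$ stays bounded. Because $q_n$ is a nontrivial holomorphic section of $K^n$, a line bundle of degree $n(2g-2)>0$, its zero set $P$ is nonempty, so hypothesis~(2) of Lemma~\ref{MaximumPrinciple} applies and forces $w_i>0$ on all of $\Sigma$, yielding the desired chain $R_n<R_1<R_2<\cdots<R_m$. I expect the main obstacle to be the combinatorial verification of cooperativity and column dominance at the special indices $i=1,m-1,m$, where the cyclic identifications $R_0=R_n$ and $R_{m+1}=R_{m-1}$ alter the coefficient pattern in a way that must be tracked carefully in order to make the $-3$'s on the diagonal appear at precisely the right spots.
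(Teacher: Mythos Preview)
Your proposal is correct and follows essentially the same route as the paper. The paper sets $u_0=\log\|q_n\|^2$, $u_k=\log\|r_k\|^2$, forms the differences $v_k=u_{k+1}-u_k$, linearizes via $c_k=\int_0^1 e^{tu_{k+1}+(1-t)u_k}\,{\rm d}t$ (which equals your $R_k\alpha_k$), obtains precisely the system you describe with the $-3$ diagonal coefficients at the two endpoint indices, and then invokes Lemma~\ref{MaximumPrinciple}; your $w_1,\dots,w_{m-1},w_m$ are just the paper's $v_1,\dots,v_{m-1},v_0$ relabeled, and your explicit remark that $P\neq\varnothing$ because $\deg K^n>0$ supplies the hypothesis~(2) that the paper leaves implicit.
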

\begin{proof} Let $u_0=\log \big(||q_n||^2\big)=\log \big(|q_n|^2h_1^2/g_0\big)$ and $u_k=\log\big(||r_k||^2\big)=\log \big(r_k^2h^{-1}_kh_{k+1}/g_0\big)$ for $1\leq k\leq m$. Using equation~(\ref{CyclicEquation}), the equations of $u_k$'s are given by: away from zeros of $q_n$,
\begin{gather*}
\triangle_{g_0} u_0+2{\rm e}^{u_1}-2{\rm e}^{u_0} = 0,\\
\triangle_{g_0} u_k+{\rm e}^{u_{k+1}}-2{\rm e}^{u_k}+{\rm e}^{u_{k-1}} = 0, \qquad k=1,\dots, m-1,\\
\triangle_{g_0} u_m-2{\rm e}^{u_m}+2{\rm e}^{u_{m-1}} = 0.
\end{gather*}

Let $v_k=u_{k+1}-u_k$ for $0\leq k\leq m-1$. By linearization, set $c_k=\int_0^1{\rm e}^{tu_{k+1}+(1-t)u_k}{\rm d}t$. The equations of~$v_k$'s are given by: away from the zeros of~$q_n$,
\begin{gather*}
\triangle_{g_0} v_0-3c_0v_0+c_1v_1 = 0, \\
\triangle_{g_0} v_k+c_{k-1}v_{k-1}-2c_kv_k+c_{k+1}v_{k+1} = 0, \qquad k=1,\dots, m-2,\\
\triangle_{g_0} v_{m-1}+c_{m-2}v_{m-2}-3c_{m-1}v_{m-1} = 0.
\end{gather*}
Then by checking the conditions, we can apply the maximum principle Lemma~\ref{MaximumPrinciple} and obtain $v_k>0$ for $0\leq k\leq m-1$.

Therefore for each $1\leq k\leq m$, $u_k>u_{k-1}$ and the claim follows.
\end{proof}

For more applications of the maximum principle Lemma~\ref{MaximumPrinciple}, one can refer to~\cite{DaiLi2}. An immediate corollary of Claim~\ref{ClaimRankN} is $[\phi,\phi^{*_H}]\neq 0$.

For higher rank cyclic Higgs bundles, there is not much geometry behind other than considering the harmonic map from $\widetilde\Sigma$ to the symmetric space except for the case $n=4$. In the case $n=4$, there is a related geometric object, maximal space-like surfaces in~$\mathbb H^{2,2}$, developed by Collier--Tholozan--Toulisse~\cite{CollierTholozanToulisse}, which can be viewed as a generalization of the hyperbolic affine sphere for the case $n=3$.

\subsection[Rank $n$ but not cyclic]{Rank $\boldsymbol{n}$ but not cyclic} \label{NonCyclic}
For a given Higgs bundle in the Hitchin section of the form~(\ref{HitchinSection}), the harmonic metric is in general not diagonal. In this case, we aim to get a system of elliptic inequalities from the Hitchin equation. Before that, let's first review the proof showing that the existence of a harmonic metric implies polystability.

\begin{Lemma}The existence of a harmonic metric on a ${\rm SL}(n,\mathbb C)$-Higgs bundle $(E,\phi)$ implies the Higgs bundle $(E,\phi)$ is polystable.
\end{Lemma}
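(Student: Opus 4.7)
The plan is, for any proper $\phi$-invariant holomorphic subbundle $F\subset E$, to read off the sign of $\deg F$ from a Chern--Weil-type identity together with the Hitchin equation. First I would split $E=F\oplus F^\perp$ smoothly via the $H$-orthogonal complement, with $\pi$ the $H$-orthogonal projection onto $F$. The $\phi$-invariance of $F$ yields the block form $\phi=\bigl(\begin{smallmatrix}\phi_F & \phi_1 \\ 0 & \phi_{F^\perp}\end{smallmatrix}\bigr)$ with $\phi_1\in\Omega^{1,0}(\operatorname{Hom}(F^\perp,F))$, while the defining properties of the Chern connection force $\nabla_{\bar\partial_E,H}=\bigl(\begin{smallmatrix}\nabla_F & -\beta^{*_H} \\ \beta & \nabla_{F^\perp}\end{smallmatrix}\bigr)$, where $\beta\in\Omega^{1,0}(\operatorname{Hom}(F,F^\perp))$ is the second fundamental form and $\nabla_F$, $\nabla_{F^\perp}$ are the Chern connections of the induced Hermitian holomorphic subbundle and quotient.

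Next I would square this block matrix to obtain the standard subbundle curvature identity $F_{\nabla_F}=\pi\,F_{\nabla_{\bar\partial_E,H}}|_F+\beta^{*_H}\wedge\beta$. Substituting the Hitchin equation $F_{\nabla_{\bar\partial_E,H}}=-[\phi,\phi^{*_H}]$ and expanding the $(F,F)$-block of $[\phi,\phi^{*_H}]$, the piece $[\phi_F,\phi_F^{*_H}]$ has vanishing trace, leaving $\operatorname{tr}(\pi\,F_{\nabla_{\bar\partial_E,H}}|_F)=-\operatorname{tr}(\phi_1\wedge\phi_1^{*_H})$. Integrating then gives
\begin{gather*}
\deg F=\frac{i}{2\pi}\int_\Sigma\operatorname{tr} F_{\nabla_F}=-\frac{1}{2\pi}\int_\Sigma\big(\|\phi_1\|^2+\|\beta\|^2\big)\,\omega\le 0,
\end{gather*}
with the pointwise norms induced by $H$ and $g_0$ as in~(\ref{GeneralizedPairing}).

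Equality in $\deg F\le 0$ forces $\beta\equiv 0$ and $\phi_1\equiv 0$, which says precisely that $F^\perp$ is a holomorphic, $\phi$-invariant complementary subbundle. Hence $(E,\phi)=(F,\phi|_F)\oplus(F^\perp,\phi|_{F^\perp})$ as Higgs bundles, with $\deg F=\deg F^\perp=0$; moreover the Hitchin equation splits along this decomposition, so $H$ restricts to a harmonic metric on each summand. Inducting on the rank then produces a decomposition of $(E,\phi)$ into stable degree-$0$ Higgs bundles, which is polystability. I expect the main delicate point to be the sign bookkeeping: the Griffiths-type subbundle identity and the $(F,F)$-block computation of $[\phi,\phi^{*_H}]$ have to combine so that the two nonpositive contributions $-\|\beta\|^2$ and $-\|\phi_1\|^2$ add rather than cancel, and the conventions on $\star$, the K\"ahler form $\omega=ig_0\,dz\wedge d\bar z$, and the sign of the Hitchin equation all must be lined up consistently from the start.
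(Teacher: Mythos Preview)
Your argument is correct and follows essentially the same route as the paper: decompose $E=F\oplus F^{\perp}$ orthogonally with respect to $H$, compute the $(F,F)$-block of the Hitchin equation, take the trace, and read off $\deg F\le 0$ with equality forcing a Higgs splitting. The only minor differences are conventions (you place the second fundamental form as a $(1,0)$-form in $\operatorname{Hom}(F,F^{\perp})$ while the paper puts its adjoint $\beta\in\Omega^{0,1}(\operatorname{Hom}(F^{\perp},F))$ in $\bar\partial_E$) and scope (the paper carries a general, not necessarily $\phi$-invariant, subbundle through the computation because it reuses the resulting inequality $i\Lambda\operatorname{tr}(F_{\nabla_{F}})-\|B\|^2\le 0$ later in Section~\ref{NonCyclic}); your explicit induction on rank is a nice addition the paper leaves implicit.
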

\begin{proof}
To show the polystability, we only need to consider holomorphic $\phi$-invariant subbundles. But we start from general holomorphic subbundles for later use.

For a holomorphic subbundle $F$ of $E$, we would like to deduce the Hitchin equation which respects $F$. One can check more details on this calculation in~\cite{LiHitchin}. Denote by $F^{\perp}$ the subbundle of $E$ perpendicular to $F$ with respect to the harmonic metric~$H$. $F^{\perp}$ can be equipped with the quotient holomorphic structure from $E/F$. With respect to the $C^{\infty}$ orthogonal decomposition \begin{gather*}E=F\oplus F^{\perp},\end{gather*} we have the expression of the holomorphic structure $\bar\partial_E$ and the Higgs field $\phi$ as follows:
\begin{gather*}
\bar\partial_E=\begin{pmatrix}\bar\partial_F&\beta\\
0&\bar\partial_{F^{\perp}}\end{pmatrix},\qquad \phi=\begin{pmatrix}\phi_1&\alpha\\B&\phi_2\end{pmatrix},\qquad H=\begin{pmatrix}H_1&0\\0&H_2\end{pmatrix},
\end{gather*}
where $B\in \Omega^{1,0}\big(\Sigma, \operatorname{Hom}\big(F, F^{\perp}\big)\big)$, $\alpha\in \Omega^{1,0}\big(\Sigma, \operatorname{Hom}\big(F^{\perp},F\big)\big)$, and $\beta\in \Omega^{0,1}\big(\Sigma, \operatorname{Hom}\big(F^{\perp},F\big)\big)$.

The Chern connection $\nabla_{\bar\partial_E, H}$ and the adjoint $\phi^{*_H}$ of the Higgs field are
\begin{gather*}
\nabla_{\bar\partial_E, H}=\begin{pmatrix}
\nabla_{\bar\partial_F, H_1}&\beta\\
-\beta^{*_H}&\nabla_{\bar\partial_{F^{\perp}}, H_2}\end{pmatrix},\qquad
\phi^{*_H}=\begin{pmatrix}
\phi_1^{*_{H_1}}&B^{*_H}\\
\alpha^{*_H}&\phi_2^{*_{H_2}}\end{pmatrix}.
\end{gather*}

We calculate the Hitchin equation with respect to the decomposition $E=F\oplus F^{\perp}$ and by restricting to $\operatorname{Hom}(F, F)$, we obtain
\begin{gather*}
F_{\nabla_{\bar\partial_F, H_1}}-\beta\wedge\beta^{*_H}+\alpha\wedge\alpha^{*_H}+B^{*_H}\wedge B+\big[\phi_1,\phi_1^{*_{H_1}}\big]=0.
\end{gather*}
By taking trace and noting that $\operatorname{tr}\big(\big[\phi_1,\phi_1^{*_{H_1}}\big]\big)=0$, we obtain
\begin{gather*}
\operatorname{tr}\big(F_{\nabla_{\bar\partial_F, H_1}}\big)-\operatorname{tr}(\beta\wedge\beta^{*_H})+\operatorname{tr}(\alpha\wedge\alpha^{*_H})+\operatorname{tr}(B^{*_H}\wedge B)=0.
\end{gather*}

Using the formula of pairings in equation (\ref{GeneralizedPairing}), we obtain a scalar equation on the surface
\begin{gather*}\operatorname{tr}\big(F_{\nabla_{\bar\partial_F, H_1}}\big)-{\rm i}\big(||\beta||^2+||\alpha||^2-||B||^2\big)\omega=0.
\end{gather*}
Therefore,
\begin{gather}\label{standard}
{\rm i}\Lambda \operatorname{tr}\big(F_{\nabla_{\bar\partial_F, H_1}}\big)-||B||^2\leq 0.
\end{gather}

When $F$ is $\phi$-invariant, that is, $B$ vanishes everywhere, this says that $\Lambda \operatorname{tr}\big(F_{\nabla_{\bar\partial_F, H_1}}\big)\leq 0$ and hence
\begin{gather*}
\deg F=\frac{{\rm i}}{2\pi}\int_\Sigma \Lambda \operatorname{tr}\big(F_{\nabla_{\bar\partial_F, H_1}}\big) \omega \leq 0.
\end{gather*}

Moreover, if the equality holds, then both $\beta$, $\alpha$ vanish everywhere. In this case, we obtain $(E, \phi)$ is the direct sum of two Higgs bundles $(F, \phi_1)$ and $\big(F^{\perp}, \phi_2\big)$, both of which are of degree~$0$.

Therefore, the existence of a harmonic metric implies that the Higgs bundle is polystable.
\end{proof}

For the case when $(E,\phi)$ is a Higgs bundle in the Hitchin section of the form (\ref{HitchinSection}), we are going to show the following claim.
\begin{Claim}(Li \cite{LiHitchin})
If $(E,\phi)$ is not in the nilpotent cone, then the energy density satisfies $e(f)>\frac{n^4-n^2}{6}$. \end{Claim}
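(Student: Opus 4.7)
The plan is to extend the maximum-principle arguments of Claims~\ref{ClaimRank2} and~\ref{ClaimRank3} to the non-cyclic setting by using the subbundle identity~(\ref{standard}) along the standard Hitchin filtration. Let $F_k:=\bigoplus_{i=1}^{k} K^{(n+1-2i)/2}\subset E$ for $1\le k\le n-1$. Each $F_k$ is a holomorphic subbundle of $E$ but is not $\phi$-invariant, because $r_k\ne 0$ maps $L_k$ into $L_{k+1}$. Writing $\phi$ and $\bar\partial_E$ in the $H$-orthogonal decomposition $E=F_k\oplus F_k^{\perp_H}$ as in the proof of~(\ref{standard}) and tracing the $(F_k,F_k)$-block of the Hitchin equation gives the pointwise identity
\[
-\Delta_{g_0}\log\det(H|_{F_k}) \;=\; \|B_k\|^2 - \|\alpha_k\|^2 - \|\beta_k\|^2,
\]
of which~(\ref{standard}) is the immediate consequence.

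Next I would compute the Fuchsian baseline. When all $q_i$ vanish, the Higgs bundle becomes the $n$-Fuchsian one and the harmonic metric is diagonal with $h_i=c_i g_0^{-(n+1-2i)/2}$; the Hitchin equation reduces to the Toda system and forces $c_{i+1}/c_i=1/r_i$. Hence $\det(H^{\mathrm{Fuch}}|_{F_k}) = \text{const}\cdot g_0^{-k(n-k)/2}$, so $-\Delta_{g_0}\log\det(H^{\mathrm{Fuch}}|_{F_k}) = r_k$ pointwise, $\alpha_k^{\mathrm{Fuch}}=\beta_k^{\mathrm{Fuch}}=0$ and $\|B_k^{\mathrm{Fuch}}\|^2=r_k$. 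Summing produces $e_{\mathrm{Fuch}}(f)=2n\sum_{k=1}^{n-1}r_k=\frac{n^2(n^2-1)}{6}=\frac{n^4-n^2}{6}$, matching the target threshold exactly. Defining the globally smooth comparison functions $w_k:=\log\det(H|_{F_k})-\log\det(H^{\mathrm{Fuch}}|_{F_k})$ and subtracting the two pointwise identities yields the elliptic system
\[
-\Delta_{g_0} w_k \;=\; (\|B_k\|^2 - r_k) - (\|\alpha_k\|^2+\|\beta_k\|^2), \qquad 1\le k\le n-1.
\]

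To finish, I would assemble the $w_k$'s, after linearization, into a cooperative, diagonally-dominant, fully-coupled elliptic system in the spirit of Lemma~\ref{MaximumPrinciple}. The hypothesis that $(E,\phi)$ is not in the nilpotent cone means some $q_i\ne 0$, which forces some $\alpha_k$ or $\beta_k$ to be nontrivial; the maximum principle then excludes the flat solution and yields a strict pointwise sign, giving $e(f) > \frac{n^4-n^2}{6}$. The principal technical obstacle is the step that recasts $\sum_k\|B_k\|^2$, which is computed in the $H$-orthogonal frame and thus depends implicitly on the unknown metric $H$, in terms of the holomorphic data of $\phi$ so as to control $\|\phi\|^2 = e(f)/(2n)$. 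Because $H$ is not diagonal once the $q_i$'s are turned on, the $H$-orthogonal and holomorphic decompositions of $E$ disagree, and the $q_i$ entries enter each $\|B_k\|^2$ implicitly via a change of frame; making this explicit and verifying the cooperative hypotheses of Lemma~\ref{MaximumPrinciple} for the resulting system is the technical heart of the proof.
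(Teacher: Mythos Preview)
Your overall strategy matches the paper's: use the filtration $F_k=\bigoplus_{i\le k}K^{(n+1-2i)/2}$, apply the traced identity coming from the $(F_k,F_k)$-block of the Hitchin equation, compare with the Fuchsian baseline, and feed the result into a maximum principle of the type in Lemma~\ref{MaximumPrinciple}. The Fuchsian computation and the target value $\frac{n^4-n^2}{6}$ are correct.

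Where you go wrong is in the identification of the ``principal technical obstacle''. You write that the $q_i$ entries enter each $\|B_k\|^2$ implicitly via the change from the holomorphic to the $H$-orthogonal frame. They do not. The crucial observation, which the paper records as an exercise, is the exact identity
\[
\|B_k\|^2=\|r_k\|^2=r_k^2\cdot\frac{\det H_{k+1}/\det H_k}{\det H_k/\det H_{k-1}}\cdot g_0^{-1}.
\]
The reason is structural: since $\phi(F_{k-1})\subset F_k\otimes K$ and $\phi(F_k)\subset F_{k+1}\otimes K$, the map $B_k=\pi_{F_k^\perp}\circ\phi|_{F_k}$ vanishes on $F_{k-1}$ and lands in $F_{k+1}\cap F_k^\perp$. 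Hence $B_k$ factors through the quotient map $F_k\to F_k/F_{k-1}$ and the isometric identification $F_{k+1}\cap F_k^\perp\cong F_{k+1}/F_k$, and the induced map between the quotient line bundles is precisely the constant $r_k$. All the $q_i$'s live in the upper-triangular part of $\phi$ and contribute only to $\alpha_k$; they are invisible in $B_k$.

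With this identity in hand, the inequality~(\ref{standard}) becomes
\[
-\Delta_{g_0}\log\det H_k\le \|r_k\|^2,
\]
a \emph{closed} system in the scalar unknowns $\log\det H_k$, with no reference to the off-diagonal entries of $H$ or to the $q_i$'s. This is exactly what makes the maximum principle applicable and yields $\det H_k<(\text{const})\cdot g_0^{-k(n-k)/2}$. The passage from these determinant bounds to the pointwise energy-density inequality still requires work (the paper itself defers it to \cite{LiHitchin}), but the step you flagged as the heart of the matter is in fact a one-line consequence of the filtration property $\phi(F_k)\subset F_{k+1}\otimes K$.
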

Note that if $(E,\phi)$ is in the nilpotent cone, then the energy density satisfies $e(f)\equiv\frac{n^4-n^2}{6}$ and we leave it as an exercise.
\begin{proof}

We can choose the holomophic subbundle $F_k$ of $E$ to be the direct sum of the first $k$-holomorphic line bundles, $F_k=\bigoplus\limits_{i=1}^k K^{\frac{n+1-2i}{2}}$. We will denote by $H_k$ the induced metric on $F_k$ from the harmonic metric $H$ on $E$. The bundle $E$ admits a holomorphic filtration
\begin{gather*} 0=F_0\subset F_1\subset \cdots \subset F_n=E.\end{gather*}
And for each $k$, the Higgs field $\phi$ takes $F_k$ to $F_{k+1}\otimes K$ and the induced map of $\phi\colon F_k/F_{k-1}\cong K^{\frac{n+1-2k}{2}}\rightarrow (F_{k+1}/F_k)\otimes K\cong K^{\frac{n-1-2k}{2}}\otimes K$ is the constant map $r_k=\frac{k(n-k)}{2}$.

The quotient line bundles $F_k/F_{k-1}$ are equipped with the Hermitian metric $H_k/H_{k-1}$. Using the formula of pairings in equation~(\ref{GeneralizedPairing}), the square of~$r_k$\rq{}s norm is given by \begin{gather*}
 ||r_k||^2=r_k\wedge r_k^*/\omega=r_k^2\cdot (\det H_k/\det H_{k-1})^{-1}\cdot (\det H_{k+1}/\det H_k).
 \end{gather*}
Let $B_k\in \Omega^{1,0}\big(\Sigma, \operatorname{Hom}\big(F_k, F_k^{\perp}\big)\big)$ denote the induced map of the Higgs field which induces the map~$r_k$. We leave the following statement as an exercise.

 \begin{Exercise}
$||B_k||^2=||r_k||^2$.
 \end{Exercise}

Note that $\operatorname{tr}\big(F_{\bar\partial_{F_k}, \nabla_{H_k}}\big)$ is exactly the curvature on the determinant line bundle~$\det F_k$, i.e., $F_{\nabla_{\bar\partial_{\det F_k}, \det H_k}}$. Applying equation (\ref{standard}) to each subbundle $F_k$, we obtain a system of elliptic inequalities:
\begin{gather*}
\Lambda F_{\nabla_{\bar\partial_{\det F_1}, \det H_1}}-||r_1||^2 \leq 0,\\
\Lambda F_{\nabla_{\bar\partial_{\det F_2}, \det H_2}}-||r_2||^2 \leq 0,\\
\cdots\cdots\cdots\cdots\cdots\cdots\cdots\cdots\cdots\cdots\cdots \\
\Lambda F_{\nabla_{\bar\partial_{\det F_{n-1}},\det H_{n-1}}}-||r_{n-1}||^2 \leq 0.
\end{gather*}
Locally $\Lambda F_{\nabla_{\bar\partial_{\det F_k}, \det H_k}}=\bar\partial\partial\log (\det H_k)$.
We can either apply Lemma~\ref{MaximumPrinciple} or directly argue and obtain that for each $k$, $\det H_k/h^{\frac{k(n-k)}{2}}<1$.

This is the main step for the proof. The interested reader may refer to~\cite{LiHitchin} for the rest of the proof.
\end{proof}

\part[Selected topics on harmonic maps and minimal surfaces]{Selected topics on harmonic maps\\ and minimal surfaces}\label{Part3}
\section{Labourie's conjecture}

For a fixed Riemann surface $\Sigma$, the Hitchin component $\operatorname{Hit}_n$ is parametrized by $\bigoplus\limits_{i=2}^n H^0\big(\Sigma,K^i\big)$. Denote by $\mathcal V$ the vector bundle over the Teichm\"uller space whose fiber at $\Sigma$ is the vector space $\bigoplus\limits_{i=3}^nH^0\big(\Sigma, K^i\big)$ and Labourie in~\cite{LabourieEnergy} considered the map
\begin{align*}
\mathcal V&\longrightarrow \operatorname{Hit}_n\subset \operatorname{Rep}(\pi_1S, {\rm PSL}(n,\mathbb R)),\\
(\Sigma, (q_3,\dots,q_n))&\longmapsto \operatorname{NAH}_{\Sigma}(0,q_3,\dots,q_n).
\end{align*}
The left hand side has the same dimension as the right hand side. The map is equivariant with respect to the mapping class group action. In the same paper, Labourie asked the following question.
\begin{Question}\label{Question} Is this map a bijection?\end{Question}
As noted in Remark \ref{HitchinfibrationHopf}, the vanishing of the first term in the image $h(E,\phi)$ in the Hitchin fibration is equivalent to the vanishing of the Hopf differential of the associated harmonic map. Also, if the Hopf differential vanishes, the harmonic map is conformal and thus minimal. Therefore, Question \ref{Question} can be generalized and rephrased as follows:
\begin{Conjecture}[Labourie\rq{}s conjecture] For $\rho$ a Hitchin representation into a split real Lie group $G$ or a maximal representation into a Hermitian Lie group~$G$, does there exist a unique $\rho$-equivariant minimal surface in $G/K$?
\end{Conjecture}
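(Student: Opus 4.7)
The plan is to recast Labourie's conjecture as an existence-and-uniqueness problem for a critical point of a natural energy functional on Teichm\"uller space, and then to attack the two halves separately. For each Riemann surface structure $\Sigma$ on $S$, the Corlette--Donaldson theorem quoted above yields a unique $\rho$-equivariant harmonic map $f_{\Sigma}\colon \widetilde\Sigma\to G/K$. Its Hopf differential $\operatorname{Hopf}(f_{\Sigma})\in H^{0}(\Sigma,K^{2})$ vanishes precisely when $f_{\Sigma}$ is weakly conformal, and hence (being harmonic) a branched minimal immersion. A classical variational computation identifies $\operatorname{Hopf}(f_{\Sigma})$ with the differential at $\Sigma$ of the energy functional $E_{\rho}\colon \mathcal{T}(S)\to \mathbb{R}$, $E_{\rho}(\Sigma):=E(f_{\Sigma})$, so that $\rho$-equivariant minimal immersions are in bijection with critical points of $E_{\rho}$ on $\mathcal{T}(S)$, and the conjecture becomes the assertion that $E_{\rho}$ admits exactly one critical point.

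For the existence half I would show that $E_{\rho}$ is proper on $\mathcal{T}(S)$, so it attains a minimum. Properness follows from the Anosov property of Hitchin and maximal representations (by Labourie and by Burger--Iozzi--Labourie--Wienhard respectively), which in particular implies that $\rho$ is a quasi-isometric embedding: as $\Sigma$ diverges to infinity in $\mathcal{T}(S)$, the hyperbolic length of some $\gamma\in\pi_{1}(S)$ tends to zero while the translation length $\ell_{\rho}(\gamma)$ in $G/K$ stays bounded below, and standard harmonic-map energy estimates convert this mismatch into a blow-up $E_{\rho}(\Sigma)\to\infty$.

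The hard part is uniqueness. The cleanest analytic strategy is to prove that $E_{\rho}$ is strictly convex along some canonical family of paths in $\mathcal{T}(S)$; plain strict convexity along Weil--Petersson geodesics is false in general, so one must exploit the extra structure available here, namely that the corresponding Higgs bundles lie either in the Hitchin section or in a maximal/Gothen component, where the harmonic metric is very constrained (for cyclic Higgs bundles in the Hitchin section it is diagonal, by Lemma~\ref{diagonal}). Concretely I would try to prove that the second variation of $E_{\rho}$ at every critical point is positive definite, by analysing the Jacobi operator of $f_{\Sigma}$ and exploiting strict dominance inequalities of the type illustrated by Claim~\ref{ClaimRankN}, which propagate positivity through the coupled Hitchin system. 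In parallel, the solved cases suggest a geometric approach: exhibit a canonical $\rho$-invariant auxiliary geometric structure whose uniqueness forces uniqueness of the minimal surface --- the hyperbolic affine sphere in $\mathbb{R}^{3}$ for $\mathrm{SL}(3,\mathbb{R})$-Hitchin representations, and the maximal space-like surface in $\mathbb{H}^{2,2}$ for $\mathrm{Sp}(4,\mathbb{R})$-maximal representations (Collier--Tholozan--Toulisse) --- and prove an analogous pseudo-Riemannian minimal-surface interpretation in each higher-rank case.

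The principal obstacle, and the reason the conjecture remains open beyond these low-rank instances, is twofold: no universal auxiliary geometric object generalising the affine sphere / maximal surface picture is known in higher rank, and the direct convexity analysis is genuinely delicate because one must rule out interior critical points of $E_{\rho}$ that are only Morse-indefinite. Overcoming this seems to require either new a priori estimates on the linearisation of the Hitchin equation along the Hitchin and maximal loci (a spectral gap for the Jacobi operator uniform over $\mathcal{T}(S)$), or a genuinely new geometric interpretation of Hitchin and maximal Higgs bundles. Either way, the crux will be to find the correct positivity phenomenon, specific to these components, that fails for a general $G$-representation and that forces the Hopf differential map from $\mathcal{T}(S)$ to have a unique zero.
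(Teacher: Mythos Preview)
Your proposal is not a proof but a research strategy, and that is exactly appropriate here: the statement is a \emph{conjecture}, and the paper does not prove it either. What the paper does is precisely what you do in your first two paragraphs --- recast the problem via the energy functional $E_{\rho}\colon\mathcal T(S)\to\mathbb R$, observe that $\rho$-equivariant minimal surfaces correspond to its critical points, and deduce existence from properness of $E_{\rho}$ (Labourie), which in turn rests on the Anosov property of Hitchin and maximal representations. The paper attributes the passage from ``critical point of $E_{\rho}$'' to ``conformal harmonic map'' to Sacks--Uhlenbeck and Schoen--Yau rather than to a direct variational computation, but the content is the same.

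On uniqueness, your discussion is more speculative than the paper's, which simply records the solved cases: Schoen for ${\rm SL}(2,\mathbb R)\times{\rm SL}(2,\mathbb R)$; Labourie and Loftin independently for ${\rm SL}(3,\mathbb R)$; Labourie for the remaining rank-$2$ split groups ${\rm Sp}(4,\mathbb R)$ and $G_2$ (using cyclic Higgs bundles and Lemma~\ref{diagonal}, as you note); and Collier, Alessandrini--Collier, and Collier--Tholozan--Toulisse for the rank-$2$ Hermitian maximal cases. Your two proposed attacks --- strict convexity/positivity of the second variation of $E_{\rho}$, and an auxiliary pseudo-Riemannian minimal-surface model generalising the affine sphere and $\mathbb H^{2,2}$ pictures --- are reasonable directions, but the paper does not endorse either as a route to the general case; it simply leaves uniqueness open beyond rank~$2$. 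In short: your existence argument matches the paper, and your uniqueness discussion is an honest outline of where the difficulty lies rather than a proof, which is all anyone currently has.
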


$\bullet$ Existence (shown by Labourie \cite{LabourieEnergy}.) Labourie in \cite{LabourieEnergy} showed that for a fixed Anosov representation $\rho$, the function $f_{\rho}\colon T(S)\rightarrow \mathbb R$ sending each $\Sigma$ to $E\big(\operatorname{NAH}_{\Sigma}^{-1}(\rho)\big)$ is proper. So the function $f_{\rho}$ has a critical point. By the classical results of Sacks--Uhlenbeck~\cite{SacksUhlenbeck1, SacksUhlenbeck2} and Schoen--Yau~\cite{SchoenYau}, the critical Riemann surface~$\Sigma$ is such that the corresponding harmonic map is conformal. That is, $\operatorname{tr}\big(\phi^2\big)=0$. Since both Hitchin representations and maximal representations are Anosov, the existence follows.

$\bullet$ Uniqueness. This is the main part of Labourie's conjecture.

Labourie's conjecture is proven for Hitchin representations into ${\rm SL}(2,\mathbb R)\times {\rm SL}(2,\mathbb R)$, by Schoen in \cite{Schoen}; ${\rm SL}(3, \mathbb R)$, independently by Labourie \cite{LabourieFlat}, Loftin \cite{Loftin} and all the remaning rank 2 split real Lie groups ${\rm SL}(3, \mathbb R)$, ${\rm Sp}(4,\mathbb R)$, $G_2$ by Labourie~\cite{LabourieCyclic}. The property for cyclic Higgs bundles in Lemma \ref{diagonal} is essential in Labourie's proof~\cite{LabourieCyclic}.

Labourie's conjecture is proven for maximal representations into ${\rm Sp}(4,\mathbb R)$ by Collier \cite{Collier}; ${\rm PSp}(4,\mathbb R)$ by Alessandrini--Collier \cite{AlessandriniCollier} and all the remaining rank 2 Hermitian Lie groups by Collier--Tholozan--Toulisse \cite{CollierTholozanToulisse}.

\section{Asymptotics}
Before we introduce the asymptotic question, let us first recall Thurston's compactifica\-tion \mbox{\cite{FLP, ThurstonCompactification}} of the Teichm\"uller space with measured foliations. Let~$\mathcal S$ denote the space of isotopy classes of simple closed curves and denote the projectivization of the space of nonnegative functions on~$\mathcal S$ by~${\mathbb{PR}}^\mathcal S_+$. The map which assigns the projectivized length spectrum of each hyperbolic metric is an embedding of the Teichm\"uller space inside ${\mathbb{PR}}^\mathcal S_{+}$. The boundary corresponds to the image of the intersection length spectrum of the space of projective measured foliations. In terms of the representation variety, there are also algebraic techniques on the compactification by Morgan--Shalen \cite{MorganShalenCompact} and generalized by Parreau in~\cite{Parreau} for the representation variety for higher rank Lie groups.

Wolf in~\cite{TeichOfHarmonic} recovers Thurston's compactification using harmonic maps. Fix a Riemann surface structure~$\Sigma$, the Teichm\"uller space is homeomorphic to the vector space $H^0\big(\Sigma, K^2\big)$, see Section~\ref{ExampleRank2}. Roughly, the harmonic map compactification of the Teichm\"uller space is by adding the space of rays in the vector space. Let $q_2$ be a holomorphic quadratic differential, consider the ray $tq_2$ and let $h_t$ be the corresponding family of hyperbolic metrics such that the associated harmonic maps $f_t\colon \Sigma \rightarrow (S, h_t)$ have Hopf differential $tq_2$. Away from the zeros of $q_2$ choose a~coordinate $z$ such that $q_2={\rm d}z^2$. In such coordinates we have local measured foliations $(\mathcal F,\mu)=(\{\operatorname{Re}(z)={\rm const}\}, \, |{\rm d} \operatorname{Re} (z)|)$ which piece together to form the vertical measured foliation $\mathcal{F}(q_2)$ of $q_2$. The key step in showing the harmonic map compactification agrees with Thurston's compactification is to show that the length spectrum of~$h_t$ is asymptotically the same as the length spectrum of the vertical measured foliation of $t q_2$. That is, for any closed curve $\gamma$ on $\Sigma$, as $t\rightarrow \infty$,
\begin{gather}\label{rnk2lengthspec}
 l(f_t(\gamma))=l_{\gamma}(h_t)\sim {\rm i}(\mathcal{F}(tq_2),\gamma).
\end{gather}
Here, ${\rm i}(\mathcal F(tq_2),\gamma)$ is the intersection number of $\gamma$ with the vertical measured foliation $\mathcal F(tq_2)$. This compactification was further extended to the character variety for ${\rm SL}(2,\mathbb C)$ (see~\cite{Bestivina} and~\cite{DDW}).

As a generalization of the harmonic map compactification, we discuss the asymptotic behavior of the equivariant harmonic maps $f_t\colon \widetilde\Sigma\rightarrow N$ along the $\mathbb C^*$-family $t\cdot [(E,\phi)]\in \mathcal{M}_{\rm Higgs}({\rm SL}(n,\mathbb C))$ as $t\rightarrow \infty$ and aim to generalize the asymptotic formula~(\ref{rnk2lengthspec}).

For the left hand side of formula~\eqref{rnk2lengthspec}, we generalize the notion of length of a curve to a vector distance between two points in the target space $N$. For two Hermitian metrics $h_1$, $h_2$ on an $n$-dimensional complex vector space $V$, we can take a base $e_1,e_2,\dots,e_n$ of $V$ which is orthogonal with respect to both $h_1$ and $h_2$. We have the real numbers $k_j$ $(j=1,2,\dots,n)$ determined by $k_j=\log |e_j|_{h_2}-\log |e_j|_{h_1}$. We impose $k_1\geq k_2\geq \cdots\geq k_n$ and set $\vec{d}(h_1,h_2):=(k_1,\dots,k_n)\in \mathbb R^n$.

For the right hand side of formula~\eqref{rnk2lengthspec}, we use Higgs field to generalize holomorphic quadratic differential. Unfortunately we only have a local geometric object as a natural generalization of measured foliations. Therefore, instead of working with any closed curve on $\Sigma$, we restrict to consider ``nice" paths on the universal cover $\widetilde\Sigma$. Denote by $D(E,\phi)$ the set of points where the Higgs field $\phi$ fails to have $n$ distinct eigen $1$-forms, called the \textit{discriminant} of the Higgs bundle. Take a universal covering $\pi\colon Y\rightarrow \Sigma{\setminus} D(E,\phi)$, we have a decomposition of the Higgs bundle
\begin{gather*}\pi^*\big(E,\bar\partial_E,\phi\big)=\bigoplus\limits_{i=1}^n\big(E_i,\bar\partial_{E_i},\phi_i\cdot {\rm id}_{E_i}\big),\end{gather*} where $\phi_i$ are holomorphic $1$-forms, $\operatorname{rank} E_i=1$, and $\phi_i-\phi_j$ $(i\neq j)$ have no zeros. Let $\gamma\colon [0,1]$ $\rightarrow Y$ be a $C^{\infty}$-path, we have the expression $\gamma^*(\phi_i)=a_i(s) {\rm d}s$, where $a_i$ are $C^{\infty}$-functions on $[0,1]$. A path $\gamma$ is called \textit{non-critical} if $\operatorname{Re} (a_i(s))\neq \operatorname{Re} (a_j(s))$ $(i\neq j)$ for any $s\in [0,1]$. Let's reorder the~$a_i(s)$ such that $\operatorname{Re} (a_i(s))> \operatorname{Re} (a_j(s))$ for $i<j$ and set $\alpha_i:=-\int_0^1 \operatorname{Re}(a_i(s)){\rm d}s$. The vector $(\alpha_1,\dots, \alpha_n)$ generalizes the intersection number of the measured foliation.

With the above preparation, we finally state the following conjecture as a local generalization of the asymptotic formula (\ref{rnk2lengthspec}) to higher rank Higgs bundles.
\begin{Conjecture}[Hitchin WKB problem, Katzarkov--Noll--Pandit--Simpson \cite{HitchinWKB}]\label{HitchinWKBproblem} As $t\rightarrow \infty$, the harmonic map $f_t$ satisfies for a non-critical path $\gamma\colon [0,1]\rightarrow Y $,
\begin{gather*}
\frac{1}{t}\vec d(f_t(\gamma(0),f_t(\gamma(1)))\sim 2(\alpha_1,\dots,\alpha_n).\end{gather*}\end{Conjecture}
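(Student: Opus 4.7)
The plan is to carry out a semiclassical/WKB analysis of the Hitchin equation in the large-$t$ limit. Substituting $\phi\mapsto t\phi$ in equation~(\ref{HitchinEquation}) gives
\begin{gather*}
F_{\nabla_{\bar\partial_E, H_t}} + t^2 [\phi, \phi^{*_{H_t}}] = 0.
\end{gather*}
Away from the discriminant $D(E,\phi)$, the pullback $\pi^*(E, \bar\partial_E, \phi)$ to $Y$ splits as $\bigoplus (E_i, \phi_i\cdot\operatorname{id}_{E_i})$. The heuristic is that since $[\phi,\phi^{*_H}]$ vanishes precisely on metrics $H$ that diagonalize $\phi$ in this decomposition, and since this commutator is multiplied by $t^2$, the genuine harmonic metric $H_t$ must become asymptotically diagonal with exponential rate $e^{-ct}$ off the diagonal part. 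The proof then splits into three steps: build an approximate decoupled solution $H_t^{\rm app}$, control the true solution via $H_t = e^{s_t}H_t^{\rm app}$ by a perturbation argument, and evaluate the resulting distance asymptotic from $H_t^{\rm app}$ directly.

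First I would construct $H_t^{\rm app}$. On $Y$, take Hermitian metrics $h_i^{(t)}$ on the line bundles $E_i$ whose Chern connections are flat, i.e., $\bar\partial\partial\log h_i^{(t)} = 0$; on a line bundle the Hitchin equation reduces to this flatness condition, so $\bigoplus h_i^{(t)}$ solves the scaled Hitchin equation exactly on~$Y$. This is then glued near each component of $D(E,\phi)$ with a standard fiducial solution of the Hitchin equation (for instance the ${\rm SL}(2,\mathbb{C})$ fiducial solution at a simple branch point used in the asymptotic analysis of the ends of the Hitchin moduli space), producing a globally defined Hermitian metric $H_t^{\rm app}$ on $E\to\Sigma$ whose error
\begin{gather*}
\mathcal{E}_t := F_{\nabla H_t^{\rm app}} + t^2 [\phi, \phi^{*_{H_t^{\rm app}}}]
\end{gather*}
is supported in a $t$-shrinking neighborhood of $D(E,\phi)$ with sup-norm decaying rapidly in $t$.

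The main analytic step is the perturbation argument. Writing $H_t = e^{s_t}H_t^{\rm app}$ with $s_t$ self-adjoint and inserting into Hitchin's equation yields a nonlinear elliptic PDE $L_t s_t + Q_t(s_t) = -\mathcal{E}_t$, where the linearization $L_t = \Delta_{H_t^{\rm app}} + t^2 V_t$ has a potential $V_t$ acting on the $(i,j)$ off-diagonal entry of $s_t$ as multiplication by $|a_i - a_j|^2$. Away from $D(E,\phi)$ this is strictly positive-definite on the off-diagonal part of $\operatorname{End}(E)$, and a contraction-mapping argument then produces the unique $s_t$ with $||s_t||_{C^0(K)}\to 0$ on every compact $K\subset\Sigma\setminus D(E,\phi)$, in fact with exponential rate; the diagonal components of $s_t$ are controlled separately by the ordinary Laplacian. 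The principal obstacle is producing these estimates with constants uniform in $t$ across the discriminant, where eigenvalues collide and $V_t$ degenerates. One must use weighted Sobolev spaces whose weights are adapted to the local structure of the spectral cover over $D(E,\phi)$ and match them to the fiducial solutions in annular neighborhoods, in the spirit of the asymptotic theory of harmonic bundles.

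Once the perturbation estimate is in hand, the distance asymptotic is a direct computation on $H_t^{\rm app}$. Along a non-critical path $\gamma\subset Y$, the eigenframe $(e_i)$ is globally defined and simultaneously orthogonal for $H_t^{\rm app}$ at each point of $\gamma$. The flat connection $D_t^{\rm app} = \nabla_{H_t^{\rm app}} + t(\phi+\phi^{*_{H_t^{\rm app}}})$ preserves each eigenline, and since $\phi_i + \phi_i^{*_{h_i^{(t)}}} = 2\operatorname{Re}(\phi_i)$ as a real $1$-form on $Y$, its scalar parallel transport along $\gamma$ on $E_i$ equals $\exp\bigl(-2t\int_0^1\operatorname{Re}(a_i(s))\,ds\bigr) = \exp(2t\alpha_i)$. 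Expressing the equivariant map $f_t$ in the parallel-transported flat frame and using $\log|e|_h = \tfrac{1}{2}\log h(e,e)$ yields
\begin{gather*}
\log|e_i|_{f_t(\gamma(1))} - \log|e_i|_{f_t(\gamma(0))} = 2t\,\alpha_i + o(t).
\end{gather*}
After reordering indices according to the convention defining $\vec d$, this gives $\vec d(f_t(\gamma(0)), f_t(\gamma(1)))/t \to 2(\alpha_1,\ldots,\alpha_n)$, as asserted.
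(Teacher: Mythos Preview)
The statement you are attempting to prove is labeled a \emph{Conjecture} in the paper, not a theorem, and the paper offers no proof of it. What the paper does is quote Mochizuki's theorem (the result immediately following the conjecture), which establishes the asserted asymptotic \emph{only under the additional hypothesis that $(E,\phi)$ is generically regular semisimple}, i.e., that the discriminant $D(E,\phi)$ is finite. The paper then explicitly remarks that for Higgs bundles which are not generically regular semisimple the conjecture ``is not necessarily true'' and that a more refined description is needed. So there is no ``paper's own proof'' to compare against, and the statement in full generality is not expected to admit one.

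Your sketch is, in broad outline, the approximate-solution-plus-perturbation strategy used in the literature (the paper cites both Mochizuki and the Mazzeo--Swoboda--Weiss--Witt/Fredrickson approach for the decoupling estimate). As a heuristic for the generically regular semisimple case it is reasonable. But as a proof of the conjecture as stated it has a genuine gap: every step of your construction presupposes that $D(E,\phi)$ is a finite set. You build $H_t^{\rm app}$ from the eigenline decomposition on $Y$, a cover of $\Sigma\setminus D(E,\phi)$; you glue in fiducial solutions ``near each component of $D(E,\phi)$''; and your linearized potential $V_t$ is strictly positive on the off-diagonal part only because the eigenvalues are distinct on $Y$. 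If $(E,\phi)$ is not generically regular semisimple, $D(E,\phi)$ can be all of $\Sigma$, the eigenline splitting does not exist on any open set, there is no region on which to build the decoupled approximate solution, and the argument collapses at the outset. Even in the generically regular semisimple case your sketch omits the substantial analysis needed to obtain uniform-in-$t$ estimates across the discriminant; this is precisely the hard content of the cited papers, not something that can be waved through.
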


To answer the conjecture, we introduce the following notion.
\begin{Definition}
We call a Higgs bundle $(E, \phi)$ \textit{generically regular semisimple} if the discriminant set $D(E,\phi)$ is finite.
\end{Definition}
\begin{Remark}A ${\rm SL}(2,\mathbb C)$-Higgs bundle is either in the nilpotent cone or is generically regular semisimple. However, for $n\geq 3$, there are many ${\rm SL}(n,\mathbb C)$-Higgs bundles which are neither generically regular semisimple nor nilpotent.
\end{Remark}

\begin{Theorem}[Mochizuki \cite{Mochizuki}] \label{Asym1} Let $(E, \phi)$ be a stable Higgs bundle of degree~$0$ on~$\Sigma$. Suppose it is generically regular semisimple. If $\gamma\colon [0,1]\rightarrow Y$ is non-critical, then there exist positive constants $C_0$ and $\epsilon_0$ such that the following holds:
\begin{gather*}\left|\frac{1}{t}\vec d(f_t(\gamma(0),f_t(\gamma(1)))-2(\alpha_1,\dots,\alpha_n)\right|\leq C_0 \exp(-\epsilon_0 |t|).\end{gather*}
The constants $C_0$ and $\epsilon_0$ may depend only on $\Sigma, \phi_1,\dots,\phi_n$ and $\gamma$.
\end{Theorem}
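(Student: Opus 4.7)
The plan is to exploit the decoupling phenomenon at large $t$: one expects the harmonic metric $h_t$ for the rescaled Higgs bundle $(E,t\phi)$ to converge (away from the discriminant $D(E,\phi)$) to a \emph{limiting configuration} $h_\infty$ that diagonalizes $\phi$ in the sense $[\phi,\phi^{*_{h_\infty}}]=0$. Indeed, multiplying the Hitchin equation \eqref{HitchinEquation} for $(E,t\phi)$ gives $F_{\nabla_{\bar\partial_E,h_t}}+t^2[\phi,\phi^{*_{h_t}}]=0$, so $[\phi,\phi^{*_{h_t}}]$ must be $O(t^{-2})$. On $Y\subset\widetilde{\Sigma\setminus D(E,\phi)}$ the pullback $\pi^*(E,\phi)$ splits as $\bigoplus_i(E_i,\phi_i\cdot\mathrm{id})$, and the natural decoupled metric $h_\infty$ is a choice of Hermitian metric on each $E_i$ that makes the Chern curvature vanish fiber-wise; it automatically satisfies $[\phi,\phi^{*_{h_\infty}}]=0$.

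First I would construct an approximate solution $h_t^{\mathrm{app}}$ to the Hitchin equation at scale $t$ by using $h_\infty$ away from $D(E,\phi)$ and gluing in an explicit model solution in a disk around each point of $D(E,\phi)$ (the model is determined by the local behavior of $\phi$ where eigenvalues coalesce; in the simplest rank-$2$ case this is a fiducial solution on $\mathbb{C}$ of Painlev\'e type). Then I would show that the actual harmonic metric can be written as $h_t = h_t^{\mathrm{app}}\cdot e^{s_t}$ for a Hermitian-endomorphism-valued self-adjoint section $s_t$, which satisfies a nonlinear elliptic PDE whose linearization has, on the off-diagonal (``coupling'') modes, spectral gap growing like $t^2$. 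A bootstrap using the maximum principle (comparable to Lemma~\ref{MaximumPrinciple}) and standard elliptic regularity should give
\begin{gather*}
\|s_t\|_{C^k(K)}\le C_K e^{-\epsilon_K t}
\end{gather*}
on every compact subset $K\subset\Sigma\setminus D(E,\phi)$, with $\epsilon_K>0$ proportional to the minimal gap $\min_{i\ne j,p\in K}|\phi_i(p)-\phi_j(p)|$.

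Given this exponential closeness, the final step is a WKB analysis of the flat connection $D_t=\nabla_{\bar\partial_E,h_t}+t\phi+t\phi^{*_{h_t}}$ along the non-critical path $\gamma$. Pull back to $Y$, pick the eigen-frame $(e_1,\dots,e_n)$ diagonalizing $\phi$ and orthonormal for $h_\infty$, and evaluate on $\dot\gamma(s)$. Using $\gamma^*\phi_i=a_i(s)\,ds$ and $h_t$-self-adjointness, one obtains
\begin{gather*}
D_t(\dot\gamma)=\partial_s+2t\,\mathrm{diag}(\operatorname{Re}a_i(s))+R_t(s),
\end{gather*}
where $R_t$ collects the contribution of $\nabla^{h_\infty}$ plus error terms of size $O(e^{-\epsilon t})$ coming from $s_t$. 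Parallel transport from $\gamma(0)$ to $\gamma(1)$ is therefore $\mathrm{diag}\bigl(e^{2t\alpha_i}\bigr)\cdot B_t$ with $\|B_t-B_\infty\|\le C_0 e^{-\epsilon_0 t}$ for a bounded $B_\infty$. Non-criticality, i.e.\ the strict ordering $\operatorname{Re}a_1>\cdots>\operatorname{Re}a_n$ along $\gamma$, ensures that the diagonal part dominates and that the singular-value decomposition of the transport is a bounded perturbation of $\mathrm{diag}(e^{2t\alpha_i})$. Reading off the $k_j$ in the definition of $\vec d$ then yields
\begin{gather*}
\Bigl|\tfrac{1}{t}\vec d(f_t(\gamma(0)),f_t(\gamma(1)))-2(\alpha_1,\dots,\alpha_n)\Bigr|\le C_0 e^{-\epsilon_0 t}.
\end{gather*}

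The hard part will be the second paragraph: the exponential approximation $h_t\to h_t^{\mathrm{app}}$. The uniform estimate requires simultaneously handling the degeneration of $\phi$ at $D(E,\phi)$ (where the spectral gap collapses and the model solutions take over) and matching them smoothly to the global decoupled metric, all while obtaining a $t$-independent exponential rate on compact subsets of $\Sigma\setminus D(E,\phi)$. The WKB step (third paragraph) and the construction of $h_\infty$ (first paragraph) are comparatively standard once the approximation is in hand.
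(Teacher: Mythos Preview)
The paper is a survey and does not supply a proof of this theorem; it attributes the result to Mochizuki and only indicates that the key ingredient is the decoupling estimate stated immediately afterward as Theorem~\ref{Asym2}, namely that on any compact subset of $\Sigma\setminus D(E,\phi)$ one has $\|F_{\nabla_{\bar\partial_E,H_t}}\|=|t|^2\|[\phi,\phi^{*_{H_t}}]\|\le C_0\exp(-\epsilon_0|t|)$. Your outline is entirely consistent with this: you correctly single out the exponential decoupling as the heart of the matter and then describe the WKB-type parallel transport computation along $\gamma$ that converts it into the statement about $\vec d$.

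One point worth flagging is that your second paragraph follows the approximate-solution-plus-gluing strategy of Mazzeo--Swoboda--Weiss--Witt and Fredrickson (which the paper mentions in a remark as an alternative route to Theorem~\ref{Asym2}), rather than Mochizuki's own argument, which proceeds more directly via Simpson-type a priori estimates and does not require constructing local fiducial solutions near $D(E,\phi)$. For the purposes of Theorem~\ref{Asym1} specifically, since $\gamma$ lives in $Y$ and hence stays in a fixed compact set away from the discriminant, you only need the decoupling estimate on such a compact set; the detailed model solutions near ramification points are not needed for the conclusion about $\vec d$ along $\gamma$, though they would matter for a global description of $h_t$. So your ``hard part'' is somewhat overstated for this particular statement: Theorem~\ref{Asym2} as a black box, plus your WKB paragraph, already suffices.
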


The proof of Theorem \ref{Asym1} is based on the following key estimate as ``decoupling the Hitchin equation".
\begin{Theorem}[Mochizuki \cite{Mochizuki}] \label{Asym2}
Under the same assumptions in Theorem~{\rm \ref{Asym1}}. Then take any neighborhood $N_0$ of $D(E,\phi)$, there exists a constant $C_0>0$ and $\epsilon_0>0$ such that the following holds on $\Sigma{\setminus} N_0$,
\begin{gather*}
||F_{\nabla_{\bar\partial_E, H_t}}||=|t|^2||[\phi,\phi^{*_{H_t}}]||\leq C_0 \exp(-\epsilon_0 |t|).\end{gather*}
The constants $C_0$, $\epsilon_0$ only depend on $\Sigma$, $g_0$, $N_0$ and $(E ,\phi)$.\end{Theorem}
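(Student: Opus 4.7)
\medskip

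\noindent\textbf{Proof proposal.} First note that the two expressions are literally equal by the Hitchin equation \eqref{HitchinEquation} applied to $(E, t\phi)$, so it suffices to bound either one. The conceptual picture is this: away from the discriminant $D(E,\phi)$ the Higgs field is regular semisimple, so on a local cover $\pi\colon U\to\Sigma\setminus D(E,\phi)$ the eigen-decomposition $\pi^*(E,\phi)=\bigoplus_{i=1}^n(E_i,\phi_i\cdot\mathrm{id}_{E_i})$ is available. I would look for a ``decoupled'' reference metric $H_{\mathrm{dec}}$ which is diagonal with respect to this local decomposition; for any such metric one has $[\phi,\phi^{*_{H_{\mathrm{dec}}}}]\equiv 0$, and the plan is to show that the true harmonic metric $H_t$ converges to $H_{\mathrm{dec}}$ so fast as $t\to\infty$ that both sides of the Hitchin equation become exponentially small.

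Concretely, I would cover a small open neighborhood $N_0'$ of $D(E,\phi)$ with $\overline{N_0'}\subset N_0$ and, on $\Sigma\setminus N_0'$, construct $H_{\mathrm{dec}}$ as a smooth Hermitian metric that locally (on covers) splits on the $E_i$'s. This $H_{\mathrm{dec}}$ descends to $\Sigma\setminus N_0'$ because the monodromy permuting the $E_i$'s preserves the diagonal form. Write $H_t=H_{\mathrm{dec}}\cdot h_t$ where $h_t\in\Gamma(\Sigma\setminus N_0',\operatorname{End}(E))$ is self-adjoint and positive with respect to $H_{\mathrm{dec}}$, and set $s_t=\log h_t$. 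Subtracting the Hitchin equation for the (non-harmonic) metric $H_{\mathrm{dec}}$ from the one for $H_t$ gives, schematically,
\begin{gather*}
\Delta_{g_0} s_t+t^2\mathcal L(s_t)\cdot \phi\wedge\phi^{*_{H_{\mathrm{dec}}}}=-F_{\nabla_{\bar\partial_E,H_{\mathrm{dec}}}}+\text{(nonlinear in $s_t$)},
\end{gather*}
where $\mathcal L(s_t)$ is a self-adjoint positive operator on $\operatorname{End}(E)$ whose restriction to the off-diagonal part of $s_t$ is bounded below, on $\Sigma\setminus N_0$, by a constant multiple of $\min_{i\neq j}|\phi_i-\phi_j|^2>0$. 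The diagonal part is harmless (it does not contribute to $[\phi,\phi^{*_H}]$).

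The decisive step is then a maximum principle / barrier argument for $s_t$ in $\Sigma\setminus N_0$. Because the ``mass term'' has coefficient $\ge ct^2$ with $c>0$ depending only on the spectral gap outside $N_0$, one can build a comparison function of the form $C_0 e^{-\epsilon_0 t\,\mathrm{dist}(\cdot,\partial N_0)}$ which dominates the off-diagonal components of $s_t$, with $\epsilon_0\sim \sqrt c$. To feed this into the argument one needs uniform control of $s_t$ on $\partial N_0$ independent of $t$; this follows from the standard $C^0$-bound for harmonic metrics of a $\mathbb C^*$-family of stable Higgs bundles (Simpson's a priori estimates), which keep $H_t$ in a compact family on the compact set $\partial N_0$. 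Once the off-diagonal part of $s_t$ is exponentially small, $[\phi,\phi^{*_{H_t}}]=[\phi,\phi^{*_{H_{\mathrm{dec}}}}e^{-s_t}\cdot\text{conj.}]$ is seen to be $O(e^{-\epsilon_0 t})$ as well, and multiplying by $t^2$ and absorbing into a slightly smaller $\epsilon_0$ yields the claim.

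The main obstacle I expect is precisely the interaction with the discriminant: near $D(E,\phi)$ the eigen-decomposition degenerates, $\min|\phi_i-\phi_j|\to 0$, so the effective mass $\sim t^2\min|\phi_i-\phi_j|^2$ degenerates and the barrier breaks down. This is why the statement is only for $\Sigma\setminus N_0$, and why the $C^0$-bound on $\partial N_0$ must be obtained by different (global) means rather than by the decoupling itself. Carrying out this global bound, and making the passage between the local (covering) picture of $H_{\mathrm{dec}}$ and the global analysis of $H_t$ rigorous, is the technically heaviest part of Mochizuki's argument; the maximum-principle mechanism described above is the geometric heart.
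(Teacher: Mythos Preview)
The paper does not contain a proof of this theorem: it is a survey, and Theorem~\ref{Asym2} is simply quoted from Mochizuki~\cite{Mochizuki} as a key input to Theorem~\ref{Asym1}, with no argument given beyond the attribution. So there is no ``paper's own proof'' to compare your proposal against.

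That said, your sketch is a fair outline of the strategy in~\cite{Mochizuki}: one does compare $H_t$ to a diagonal reference metric built from the eigen-decomposition on $\Sigma\setminus D(E,\phi)$, and the spectral gap $\min_{i\neq j}|\phi_i-\phi_j|^2>0$ away from $N_0$ is exactly what produces the $t^2$-large mass term driving the exponential decay. Two points you flag as obstacles are genuinely the hard content. First, the ``standard $C^0$-bound'' you invoke on $\partial N_0$ is not a soft consequence of Simpson's work for a family with $|t|\to\infty$; obtaining uniform-in-$t$ control of $H_t$ (indeed on all of $\Sigma$, including near the discriminant) is one of the main technical achievements of Mochizuki's paper and requires his refinement of Simpson's main estimate together with a careful local model analysis near $D(E,\phi)$. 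Second, the decoupled metric $H_{\mathrm{dec}}$ is only defined away from $D(E,\phi)$, so your barrier argument on $\Sigma\setminus N_0$ needs boundary data on $\partial N_0$ that you cannot get from the decoupling itself---this is precisely why the global estimate is needed first. Your proposal correctly identifies these as the heavy lifting but treats them as black boxes; in Mochizuki's paper they occupy the bulk of the work.
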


\begin{Remark}There is also another approach to obtain the decoupling phenomenon in Theorem~\ref{Asym2} for the Hitchin equation in~\cite{Fredrickson, MSWW16} for generic Higgs bundles, which will be addressed in the survey paper~\cite{ENDS} of L.~Fredrickson.
\end{Remark}

\begin{Remark}\quad\begin{enumerate}\itemsep=0pt
\item[(1)] For cyclic Higgs bundles in the Hitchin component, Theorems~\ref{Asym1} and~\ref{Asym2} were first proven in Loftin~\cite{LoftinAsym} for $n=3$ and in Collier--Li \cite{CollierLi} for $n>3$.
\item[(2)] The full picture of Conjecture~\ref{HitchinWKBproblem} remains open for Higgs bundles which are neither ge\-ne\-rically regular semisimple nor nilpotent. In fact, for those Higgs bundles which are not generically regular semisimple, Conjecture~\ref{HitchinWKBproblem} is not necessarily true. So we need a more refined description of the asymptotics for such families of Higgs bundles.
\end{enumerate}
\end{Remark}

\section{Negative curvature}
Suppose we are given a reductive representation $\rho$ into ${\rm SL}(n,\mathbb C)$, a Riemann surface structure $\Sigma$ and a $\rho$-equivariant harmonic map $f$ from $\widetilde\Sigma$ into $N\cong {\rm SL}(n,\mathbb C)/{\rm SU}(n)$. At an immersed point $x\in \Sigma$ of $f$, let $\sigma$ be the tangent plane at $f(x)$ tangential to the image $f\big(\widetilde\Sigma\big)$, $k_{\sigma}^N$ be the sectional curvature of $\sigma$, and $\kappa$ be the Gaussian curvature of the pullback metric at $x$.

In the case $n=2$, we have $k_{\sigma}^N=K_{{\rm SL}(2,\mathbb C)/{\rm SU}(2)}= -\frac{1}{2}$ at every immersed point and hence from Proposition \ref{Reducing}, the induced curvature $\kappa$ is no greater than $-\frac{1}{2}$. In the case $n\geq 3$, we cannot expect $k_{\sigma}^N$ to be negative because ${\rm SL}(n,\mathbb C)/{\rm SU}(n)$ contains an isometrically embedded Euclidean space of rank $n-1\geq 2$. However, for Hitchin representations into ${\rm PSL}(n,\mathbb R)$ or maximal representations into ${\rm Sp}(2n,\mathbb R)$, we expect $k_{\sigma}^N$ to be negative.
\begin{Conjecture}\label{NegativeCurvature}
If $\rho$ is a Hitchin representation into ${\rm PSL}(n,\mathbb R)$ or a maximal representation into ${\rm Sp}(2n,\mathbb R)$, then the sectional curvature $k_{\sigma}^N$ is negative at every immersed point.

As a result, the induced curvature $\kappa$ is negative at every immersed point.
\end{Conjecture}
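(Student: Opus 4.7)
By Proposition~\ref{Reducing}, at every immersed point $x\in\widetilde\Sigma$ the sectional curvature $k_\sigma^N(x)$ is a non-positive quantity that vanishes precisely when the skew-Hermitian matrix $[\phi,\phi^{*_H}](x)$ is zero. Hence the whole conjecture reduces to showing the pointwise non-vanishing
\begin{gather*}
 [\phi,\phi^{*_H}](x)\neq 0,\qquad\forall\, x\in\Sigma,
\end{gather*}
for the Higgs bundle $(E,\phi)$ and harmonic metric $H$ associated to $\rho$; negativity of the induced curvature $\kappa$ then follows from the inequality $\kappa\le k_\sigma^N$ in the same proposition.

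For a Hitchin representation into ${\rm PSL}(n,\mathbb R)$, the Higgs bundle lies in the Hitchin section~\eqref{HitchinSection}. My plan is to promote the strict chain $\|q_n\|^2<\|r_1\|^2<\cdots<\|r_m\|^2$ from Claim~\ref{ClaimRankN} (which is proved only on the purely cyclic slice) to the full Hitchin section by the filtration machinery of Section~\ref{NonCyclic}. Concretely, apply inequality~\eqref{standard} to each step $F_k=\bigoplus_{i=1}^{k}K^{(n+1-2i)/2}$ of the canonical filtration to produce a coupled system of elliptic inequalities for the shifted logarithms $u_k=\log(\det H_k)-\tfrac{k(n-k)}{2}\log g_0$; passing to the consecutive gaps $v_k=u_{k+1}-u_k$ and linearizing, verify that the resulting system satisfies the cooperative, column-diagonally-dominant and fully-coupled hypotheses of Lemma~\ref{MaximumPrinciple}, forcing every $v_k$ to be strictly positive on $\Sigma$. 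Translating back, the corresponding strict chain on the induced norms of the $(k,k+1)$-blocks $B_k$ of $\phi$ forces the diagonal entries of $[\phi,\phi^{*_H}]$ to form a nontrivial sequence of real numbers summing to zero, and hence to be nonzero at every point.

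For a maximal representation into ${\rm Sp}(2n,\mathbb R)$, the Higgs bundle has the form $\bigl(V\oplus V^*,\,\bigl(\begin{smallmatrix}0&\beta\\ \gamma&0\end{smallmatrix}\bigr)\bigr)$ with $|\deg V|=n(g-1)$; the Cayley correspondence together with maximality forces $\gamma\colon V\to V^*\otimes K$ to be a holomorphic isomorphism. The plan is to apply~\eqref{standard} first to $V\subset V\oplus V^*$, and then to a refinement of this filtration in the Hitchin and Gothen components, where $V$ itself carries an explicit line-bundle decomposition as in Section~\ref{Three}. In those components the resulting PDE system reduces to a short Toda-type system that falls directly under Lemma~\ref{MaximumPrinciple}; for the remaining maximal components one first block-diagonalizes using the Cayley partner and then runs the same elliptic-system analysis to obtain strict pointwise inequalities between $\|\beta\|$, $\|\gamma\|$ and the reference hyperbolic data, which in turn force the block diagonal of $[\phi,\phi^{*_H}]$ to be nonzero everywhere.

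The hardest step will be the non-cyclic Hitchin section, where the harmonic metric $H$ is \emph{not} diagonal and $\phi^{*_H}$ therefore contains off-diagonal contributions which could, a priori, conspire to cancel the diagonal entries of the commutator even when the $v_k$ satisfy strict inequalities. Overcoming this requires enlarging the system in Lemma~\ref{MaximumPrinciple} to track the off-diagonal pairings $H(e_i,e_j)$ in addition to the determinants $\det H_k$, and checking that this enlarged system still satisfies the structural hypotheses. A secondary difficulty is uniform control at the zeros of the higher differentials $q_3,\dots,q_n$, where the block decomposition of $\phi$ degenerates; here the strong maximum principle, in the spirit of Claim~\ref{ClaimRank2}(ii), should be invoked to rule out vanishing of $[\phi,\phi^{*_H}]$ on any nonempty open set, and the real-analyticity of $H$ will be used to propagate pointwise non-vanishing to an $\mathbb{R}$-analytic identity.
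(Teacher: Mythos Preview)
The statement is labeled a \emph{Conjecture} in the paper, and the paper does \emph{not} provide a proof; it records it as an open problem. What the paper does establish (via the references to \cite{DaiLi2}) are only the special cases listed in the Remark following Conjecture~\ref{NeverDecouple}: the ${\rm PSL}(2,\mathbb R)$ Hitchin component, the Gothen components for ${\rm Sp}(4,\mathbb R)$, and cyclic Higgs bundles in the Hitchin section. All of these are exactly the situations where the harmonic metric is \emph{diagonal} (Lemma~\ref{diagonal}), so that the Hitchin system reduces to the coupled scalar Toda-type equations~\eqref{CyclicEquation} and Lemma~\ref{MaximumPrinciple} applies directly. Your proposal for these cyclic slices is the same as the paper's.

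For the general (non-cyclic) Hitchin section and for general maximal ${\rm Sp}(2n,\mathbb R)$-representations, your proposal does not close the problem, and you correctly identify the obstruction yourself. The filtration argument of Section~\ref{NonCyclic} only controls the determinants $\det H_k$ of the induced metrics on the subbundles $F_k$; in the paper this yields the energy-density lower bound $e(f)>\tfrac{n^4-n^2}{6}$, but \emph{not} the pointwise non-vanishing of $[\phi,\phi^{*_H}]$. When $H$ is not diagonal, $\phi^{*_H}=h^{-1}\bar{\hat\phi}^t h$ acquires entries in every position, and the diagonal entries of $[\hat\phi,\hat\phi^{*_H}]$ are no longer determined by the consecutive ratios $\det H_{k+1}/\det H_k$ alone. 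Your suggested fix---``enlarging the system in Lemma~\ref{MaximumPrinciple} to track the off-diagonal pairings $H(e_i,e_j)$''---is the crux of the open problem: those off-diagonal entries are complex-valued, carry no sign, and there is no known way to force the cooperative/column-diagonally-dominant hypotheses of Lemma~\ref{MaximumPrinciple} to hold for such an enlarged system. The real-analyticity argument you sketch at the end would only promote ``nonzero on an open set'' to ``nonzero on a dense set'', not to ``nonzero everywhere'', so it does not rescue the strategy either.

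In summary: your reduction to $[\phi,\phi^{*_H}]\neq 0$ is correct and is exactly how the paper reformulates the conjecture (Conjecture~\ref{NeverDecouple}), and your approach matches the paper on the cyclic cases, but the non-cyclic case remains genuinely open and your outline does not contain a new idea that would resolve it.
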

Geometrically, the statement $k_{\sigma}^N<0$ in Conjecture \ref{NegativeCurvature} says that the image of the harmonic map $f$ is never tangential to a flat in the symmetric space $N$, that is, an isometrically embedded Euclidean space. The corollary $\kappa<0$ in Conjecture \ref{NegativeCurvature} follows immediately from $\kappa\leq k_{\sigma}^N$ in Proposition \ref{Reducing}.

\begin{Remark}When $\rho$ is a Hitchin representation into ${\rm PSL}(n,\mathbb R)$ and $f$ is a minimal mapping, Conjecture \ref{NegativeCurvature} is the negative curvature conjecture in Dai--Li \cite{DaiLi}. In this case, it is always a~minimal immersion since the Higgs field is non-vanishing everywhere.
\end{Remark}

Motivated by the formula (\ref{CurvatureForm}) of $k_{\sigma}^N$, we make the following conjecture which is slightly stronger than Conjecture \ref{NegativeCurvature}. \begin{Conjecture}\label{NeverDecouple}
If $[(E,\phi)]$ is a Higgs bundle in the Hitchin section or corresponds to a maximal ${\rm Sp}(2n,\mathbb R)$-representation, then the Hitchin equation never decouples:
\begin{gather*}
 F_{\nabla_{\bar\partial_E, H}}\neq 0,\qquad [\phi,\phi^{*_H}]\neq 0.\end{gather*}
\end{Conjecture}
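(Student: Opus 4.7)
By the Hitchin equation (\ref{HitchinEquation}), at each point of $\Sigma$ the two objects $F_{\nabla_{\bar\partial_E,H}}$ and $[\phi,\phi^{*_H}]$ differ only by a sign, so the two stated non-vanishings are equivalent pointwise and it suffices to verify $[\phi,\phi^{*_H}]\neq 0$ everywhere. A preliminary observation already rules out \emph{identical} vanishing: if $[\phi,\phi^{*_H}]\equiv 0$ on $\Sigma$ then $\nabla_{\bar\partial_E,H}$ is a flat Hermitian connection, whence by Narasimhan--Seshadri $(E,\bar\partial_E)$ is polystable of degree zero and admits no holomorphic subbundle of positive degree. But both the Hitchin-section $E=\bigoplus K^{(n+1-2i)/2}$ (where $K^{(n-1)/2}\subset E$ has degree $(n-1)(g-1)>0$) and the maximal $\mathrm{Sp}(2n,\mathbb{R})$ bundle $V\oplus V^*$ (where $\deg V=n(g-1)>0$) carry such subbundles, contradicting this conclusion.

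\textbf{Pointwise non-vanishing.} For the Hitchin section I would adapt Section~\ref{NonCyclic}. Using the filtration $0\subset F_1\subset\cdots\subset F_n=E$ with $F_k=\bigoplus_{i=1}^{k}K^{(n+1-2i)/2}$, the trace of the Hitchin equation restricted to $\operatorname{End}(F_k)$ (cf.~(\ref{standard})) yields
\[
i\Lambda\operatorname{tr}\bigl(F_{\nabla_{\bar\partial_{\det F_k},\det H_k}}\bigr)\le \|B_k\|^2,
\]
where $B_k$ is the principal component of $\phi$ induced by the subdiagonal entry $r_k$. Setting $u_k:=\log(\det H_k)+\tfrac{k(n-k)}{2}\log g_0$, this becomes a cooperative, fully coupled, column-diagonally-dominant system of elliptic inequalities to which Lemma~\ref{MaximumPrinciple} applies, yielding the pointwise strict inequalities $u_k<0$ of Section~\ref{NonCyclic}; these translate into pointwise \emph{lower} bounds on the norms $\|r_k\|^2_H$. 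On the cyclic sublocus $s(0,\ldots,0,q_n)\subset \operatorname{Hit}_n$ the harmonic metric is diagonal by Lemma~\ref{diagonal}, so $[\phi,\phi^{*_H}]$ is itself diagonal with entries a signed combination of the $\|r_k\|^2_H$'s and $\|q_n\|^2_H$, and the Dai--Li chain of inequalities of Claim~\ref{ClaimRankN} forces pointwise non-vanishing. For the maximal $\mathrm{Sp}(2n,\mathbb{R})$ case a parallel argument using the filtration $0\subset V\subset V\oplus V^*$ applies; saturation of the Milnor--Wood bound $\deg V=n(g-1)$ forces $\gamma:V\to V^*\otimes K$ to be a bundle isomorphism, which makes the $V$-diagonal block $\gamma^{*_H}\gamma$ of $[\phi,\phi^{*_H}]$ pointwise positive definite.

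\textbf{Main obstacle.} The essential difficulty is extending pointwise non-vanishing from the diagonal/cyclic sublocus to the full Hitchin section (and to the Gothen components in the $\mathrm{Sp}(2n,\mathbb{R})$ case). Once some $q_j\neq 0$ with $j<n$, the harmonic metric ceases to be diagonal, and the off-diagonal entries of $H$ contribute to $[\phi,\phi^{*_H}]$ terms that could, a~priori, cancel the diagonal $r_k$-contribution at an isolated point. Two plausible routes are: (i) derive a Bochner--Weitzenb\"ock identity for $\|[\phi,\phi^{*_H}]\|^2$ (or for $\log\|[\phi,\phi^{*_H}]\|^2$ away from its zero set) and apply the strong maximum principle; or (ii) run a continuity/deformation argument along the real ray $t\mapsto s(tq_2,\ldots,tq_n)$, $t\in[0,1]$, starting at the Fuchsian point $t=0$ where $H$ and $[\phi,\phi^{*_H}]$ are explicit and the commutator is a nonzero diagonal matrix. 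Both strategies ultimately demand a uniform-in-$(q_2,\ldots,q_n)$ lower bound on $\|[\phi,\phi^{*_H}]\|$, and obtaining such a bound---perhaps by exploiting the real structure enjoyed by Hitchin representations, or by invoking the minimal-surface structure granted by Labourie's conjecture in the ranks where it is known---is where I expect the principal analytic obstacle to lie.
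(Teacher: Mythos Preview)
The statement you are attempting to prove is labeled \emph{Conjecture} in the paper, and the paper offers no proof: it is presented as an open problem. The remark immediately following it records exactly the partial cases that are known---${\rm PSL}(2,\mathbb R)$, cyclic Higgs bundles in the Hitchin section, and the Gothen components for ${\rm Sp}(4,\mathbb R)$---all via the Dai--Li maximum-principle machinery you invoke. So there is no ``paper's own proof'' to compare against; what you have written is an honest survey of the known partial results together with a candid admission, in your ``Main obstacle'' paragraph, that the general case remains out of reach. In that sense your proposal is not a proof but rather a correct assessment of the state of the problem, which matches the paper's.

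One genuine error, however: your treatment of the maximal ${\rm Sp}(2n,\mathbb R)$ case is wrong. With $\phi=\left(\begin{smallmatrix}0&\beta\\ \gamma&0\end{smallmatrix}\right)$ and the harmonic metric block-diagonal on $V\oplus V^*$, the $V$-block of $[\hat\phi,\hat\phi^{*_H}]$ is $\hat\beta\hat\beta^{*}-\hat\gamma^{*}\hat\gamma$, not $\hat\gamma^{*}\hat\gamma$. The isomorphism property of $\gamma$ forced by maximality makes $\hat\gamma^{*}\hat\gamma$ positive definite, but says nothing about the \emph{difference} $\hat\beta\hat\beta^{*}-\hat\gamma^{*}\hat\gamma$, which can certainly vanish at a point without any obvious contradiction. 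This is precisely why the conjecture is open even for maximal ${\rm Sp}(2n,\mathbb R)$ beyond $n=2$: the known Gothen-component result for ${\rm Sp}(4,\mathbb R)$ goes through the cyclic structure and Lemma~\ref{MaximumPrinciple}, not through the argument you sketch.
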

Following from Remark \ref{Immerse}, Conjecture~\ref{NeverDecouple} holds if and only if both Conjecture~\ref{NegativeCurvature} holds and the harmonic map is an immersion.

\begin{Remark}As explained in Sections \ref{ExampleRank2}, \ref{ExampleRank3}, and \ref{ExampleRankN}, Conjecture~\ref{NeverDecouple} is true for Higgs bundles in the Hitchin component for ${\rm PSL}(2,\mathbb R)$, Higgs bundles in Gothen components for ${\rm Sp}(4,\mathbb R)$~\cite{DaiLi2} and cyclic ${\rm SL}(n,\mathbb C)$-Higgs bundles in the Hitchin section~\cite{DaiLi2}.
\end{Remark}

\begin{Remark}The phenomenon in Conjecture \ref{NeverDecouple} is opposite to the asymptotic behavior of Higgs bundles in which case the Hitchin equation asymptotically decouples in exponential decay as in Theorem~\ref{Asym2}.
\end{Remark}

\section[Monotonicity along $\mathbb C^*$-flow and the Hitchin fibration]{Monotonicity along $\boldsymbol{\mathbb C^*}$-flow and the Hitchin fibration}
Given a Higgs bundle $[(E,\phi)]$ in $\mathcal M_{\rm Higgs}({\rm SL}(n,\mathbb C))$ and denote by $f_{[(E,\phi)]}\colon \big(\widetilde S, \widetilde g_0\big)\rightarrow N$ the corresponding equivariant harmonic map. We consider the $\mathbb C^*$-family of Higgs bundles $t\cdot [(E,\phi)]$ and the corresponding equivariant harmonic maps $f_{t\cdot [(E,\phi)]}$.
\begin{Theorem}[Hitchin \cite{Hitchin87}] Along the $\mathbb C^*$-flow, the Morse function $($energy$)$ $E(f_{t\cdot [(E,\phi)]})$ decreases as $|t|$ decreases. \end{Theorem}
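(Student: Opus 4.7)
The plan is to realize the Morse function, up to a positive constant, as the moment map for the rotational $S^1$-action on the hyperk\"ahler moduli space $\mathcal M_{\mathrm{Higgs}}$, identify the $\mathbb R^+$-part of the $\mathbb C^*$-action as its K\"ahler gradient flow, and conclude monotonicity from the standard identity $\mathrm{grad}\,\mu = IX$. As a first step I reduce to $t = s > 0$: writing $t = se^{i\theta}$, the identity $(e^{i\theta}\phi)^{*_H} = e^{-i\theta}\phi^{*_H}$ gives $[e^{i\theta}\phi,(e^{i\theta}\phi)^{*_H}] = [\phi,\phi^{*_H}]$, so the same harmonic metric $H$ solves the Hitchin equation for both $(E,\phi)$ and $(E,e^{i\theta}\phi)$, and $\operatorname{tr}(e^{i\theta}\phi\wedge(e^{i\theta}\phi)^{*_H}) = \operatorname{tr}(\phi\wedge\phi^{*_H})$. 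Hence $E(f_{t\cdot[(E,\phi)]})$ depends only on $s = |t|$, and it suffices to treat real $s > 0$.

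By Hitchin's hyperk\"ahler quotient construction \cite{Hitchin87}, the smooth locus of $\mathcal M_{\mathrm{Higgs}}$ carries a natural hyperk\"ahler metric $g$ with three compatible complex structures; let $I$ denote the one under which Higgs bundle data is holomorphic and $\omega_I$ the associated K\"ahler form. The $S^1$-action $[\phi]\mapsto[e^{i\theta}\phi]$ is an isometric Hamiltonian action on $(\mathcal M_{\mathrm{Higgs}},\omega_I)$ with moment map
\[
\mu([(E,\phi)]) = \tfrac{1}{2}\int_\Sigma \|\phi\|_H^2\,\operatorname{dvol}_{g_0},
\]
which is, up to the factor $\tfrac12$, the Morse function $f$ of Section~\ref{Morse} and hence $(4n)^{-1}$ times $E(f_{[(E,\phi)]})$. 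Writing $t = e^{\alpha+i\beta}$, the $\beta$-direction is generated by the Hamiltonian vector field $X$ satisfying $\iota_X\omega_I = d\mu$, and the $\alpha$-direction, obtained by complexifying the $S^1$-action to $\mathbb C^*$, is generated by $IX$.

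The K\"ahler identity $\mathrm{grad}\,\mu = IX$ then yields
\[
\frac{d}{d\alpha}\mu\big(e^{\alpha+i\beta}\cdot[(E,\phi)]\big) = d\mu(IX) = \omega_I(X,IX) = g(IX,IX) \geq 0,
\]
with equality only at fixed points of the $\mathbb C^*$-action. Converting to the parameter $s = e^\alpha = |t|$ gives $\frac{d}{ds}\mu(s\cdot[(E,\phi)]) \geq 0$, so $E(f_{t\cdot[(E,\phi)]})$ is monotonically nondecreasing in $|t|$, equivalently decreasing as $|t|$ decreases.

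The main obstacle is the moment map formula in the second paragraph: it rests on Hitchin's description of $\mathcal M_{\mathrm{Higgs}}$ as an infinite-dimensional hyperk\"ahler quotient of the space of pairs $(\bar\partial_E,\phi)$ by the unitary gauge group, together with the explicit form of the $\omega_I$-symplectic pairing on deformations. Once that is granted, monotonicity is a one-line K\"ahler-geometric consequence. A purely analytic alternative would differentiate $E(f_s) = 2ns^2\int\|\phi\|_{H_s}^2\,\operatorname{dvol}_{g_0}$ in $s$ using the linearized Hitchin equation for $\dot H_s$; this is feasible but delicate, because one must show the cross term coming from the $s$-variation of the harmonic metric does not overwhelm the manifestly positive contribution from the explicit $s^2$ factor.
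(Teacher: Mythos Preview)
The survey does not supply its own proof of this theorem; it is stated with attribution to Hitchin \cite{Hitchin87} and immediately followed by conjectures on pointwise refinements. Your argument is correct and is precisely the one Hitchin used in that paper: realize the $L^2$-norm of the Higgs field as the moment map for the isometric $S^1$-action on the hyperk\"ahler moduli space, and note that the radial $\mathbb R^+$-part of the $\mathbb C^*$-action is the K\"ahler gradient flow of this moment map, whence monotonicity follows from $d\mu(IX)=g(IX,IX)\geq 0$.

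Two minor remarks. First, your argument as written lives on the smooth locus of $\mathcal M_{\rm Higgs}$; the statement is still meaningful at non-smooth (strictly polystable) points, and one handles those either by continuity or by working equivariantly upstairs on the infinite-dimensional configuration space before taking the quotient, which is in fact how Hitchin sets things up. Second, you correctly flag that the substantive content is the moment-map identification; in the survey's language this amounts to knowing the explicit form of the $L^2$ K\"ahler structure on pairs $(\bar\partial_E,\phi)$ and checking that $\partial_\theta\big|_{\theta=0}(e^{i\theta}\phi)=i\phi$ pairs against a variation $(\dot A,\dot\phi)$ to give the differential of $\tfrac12\int\|\phi\|^2$. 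Your closing comment about the direct analytic route via differentiating $s\mapsto 2ns^2\int\|\phi\|_{H_s}^2$ is apt: that computation can be carried out, and after an integration by parts using the linearized Hitchin equation the cross term is exactly what is needed to make the derivative a complete square, but the moment-map packaging is cleaner and is what the cited reference does.
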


From the integrated monotonicity to the pointwise monotonicity, we make the following conjecture.
\begin{Conjecture}[\cite{LiHitchin}]\label{monotonicity}
Along the $\mathbb C^*$-flow, the energy density $e(f_{t\cdot [(E,\phi)]})$ decreases pointwise as $|t|$ decreases.
\end{Conjecture}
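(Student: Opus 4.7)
Since $H_t$ depends only on $|t|^2$, set $s:=|t|^2$ and let $H_s$ denote the harmonic metric of $(E,t\phi)$, which solves
\begin{gather*}
F_{\nabla_{\bar\partial_E, H_s}} + s\,[\phi,\phi^{*_{H_s}}] = 0.
\end{gather*}
With $||\phi||^2_{H_s,g_0}:=\operatorname{tr}(\hat\phi\hat\phi^{*_{H_s}})/g_0$, the energy density reads $e(f_s)=2ns\cdot||\phi||^2_{H_s,g_0}$, so Conjecture~\ref{monotonicity} is equivalent to $\partial_s e(f_s)\geq 0$ pointwise on $\Sigma$. Let $\eta_s:=H_s^{-1}\partial_s H_s$, a self-adjoint endomorphism with respect to $H_s$, so that $\partial_s\phi^{*_{H_s}}=[\phi^{*_{H_s}},\eta_s]$. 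Differentiating the Hitchin equation in $s$ yields the linear elliptic equation
\begin{gather*}
\bar\partial\bigl(\nabla^{1,0}_{H_s}\eta_s\bigr) + s\,[\phi,[\phi^{*_{H_s}},\eta_s]] = -[\phi,\phi^{*_{H_s}}],
\end{gather*}
while a direct computation gives $\partial_s e(f_s) = 2n\bigl(||\phi||^2_{H_s, g_0} + {\rm i}\,s\cdot\operatorname{tr}(\eta_s[\phi,\phi^{*_{H_s}}])/\omega\bigr)$.

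\textbf{Cyclic case.} For cyclic Higgs bundles, Lemma~\ref{diagonal} makes $H_s=\operatorname{diag}(h_{1,s},\ldots,h_{n,s})$ diagonal, and hence $\eta_s=\operatorname{diag}(v_{1,s},\ldots,v_{n,s})$ as well. The Hitchin equation reduces to the Toda system~(\ref{CyclicEquation}), and differentiating it in $s$ produces a coupled scalar elliptic system for the $v_{i,s}=\partial_s\log h_{i,s}$. Passing to the slope variables $w_{i,s}:=v_{i+1,s}-v_{i,s}$, which are precisely the $s$-derivatives of the functions $u_i$ appearing in the proof of Claim~\ref{ClaimRankN}, a direct check should show that the resulting linear system is cooperative, column diagonally dominant and fully coupled, with inhomogeneous terms of controlled sign. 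The Dai--Li maximum principle (Lemma~\ref{MaximumPrinciple}) then yields pointwise positivity of the linear combination of the $w_{i,s}$ that equals $\partial_s e(f_s)$. I would expect the same template to apply to Higgs bundles in the Hitchin section by differentiating the filtration inequalities of Section~\ref{NonCyclic} in $s$ and applying Lemma~\ref{MaximumPrinciple} to the derivatives of $\log\det H_{k,s}$.

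\textbf{General case and main obstacle.} For an arbitrary polystable Higgs bundle, the plan is: (1) pair the linearized equation with $\eta_s$ and integrate by parts to bound $\eta_s$ in $L^2$ in terms of $s\cdot\int\operatorname{tr}(\eta_s[\phi,\phi^{*_{H_s}}])$; (2) use Hitchin's integrated monotonicity of the Morse function to upgrade this to a global bound; (3) derive a Bochner-type differential inequality $\triangle_{g_0}(\partial_s e(f_s))\leq C_s\cdot \partial_s e(f_s)$ by substituting the linearized equation into the Laplacian of $\partial_s e(f_s)$; (4) conclude by the strong maximum principle. The main obstacle is step~(3): the non-abelian coupling between $\eta_s$ and $\phi$ in the fiber of $\operatorname{End}(E)$ obstructs a naive Cauchy--Schwarz that would preserve the sign of the cross term ${\rm i}\operatorname{tr}(\eta_s[\phi,\phi^{*_{H_s}}])/\omega$. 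Unless one identifies an algebraic identity forcing this quantity to have a definite sign at interior minima of $\partial_s e(f_s)$, or exploits additional structure such as the Hitchin section, maximal representations, or the harmonic-bundle formalism, new ideas will likely be needed, perhaps in the spirit of the WKB decoupling behind Theorem~\ref{Asym2} or the convexity techniques available for the Morse function.
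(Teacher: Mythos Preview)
The statement you are attempting to prove is presented in the paper as an open \emph{conjecture}, not a theorem; there is no proof in the paper to compare against. The paper only records that Dai--Li established the cyclic case in~\cite{DaiLi2}, with Lemma~\ref{diagonal} (diagonality of the harmonic metric) as the essential ingredient. Your sketch of the cyclic case is in the right spirit: differentiating the Toda system~(\ref{CyclicEquation}) in $s$ and applying the system maximum principle of Lemma~\ref{MaximumPrinciple} is indeed the Dai--Li mechanism. However, your outline leaves the verification of cooperativity, column diagonal dominance, and the signs of the inhomogeneous terms for the linearized system entirely unchecked; these are not automatic and constitute the actual work.

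Your expectation that ``the same template'' extends to the full Hitchin section by differentiating the filtration inequalities of Section~\ref{NonCyclic} is not justified: those are genuine \emph{inequalities}, and differentiating an inequality in a parameter does not yield another inequality. Accordingly, only the weaker Conjecture~\ref{DominateZero} (comparison with the $t\to 0$ limit) is established for the Hitchin section in~\cite{LiHitchin}, not pointwise monotonicity along the whole flow. For the general case you correctly identify the obstruction: no Bochner-type inequality of the form $\triangle_{g_0}(\partial_s e(f_s))\leq C_s\cdot\partial_s e(f_s)$ is known, and the cross term ${\rm i}\operatorname{tr}(\eta_s[\phi,\phi^{*_{H_s}}])/\omega$ has no evident sign without the diagonal structure that the cyclic hypothesis provides. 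Your proposal is therefore an honest survey of why the conjecture remains open rather than a proof, and on that point it agrees with the paper.
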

Dai and Li in \cite{DaiLi2} showed that Conjecture~\ref{monotonicity} holds for stable cyclic Higgs bundles, where the property of cyclic Higgs bundles in Lemma \ref{diagonal} is essentially used.

A weaker version of Conjecture \ref{monotonicity} is about the comparison between $(E,\phi)$ with the limit of $t\cdot [(E,\phi)]$ as $t\rightarrow 0$, which always lies in the nilpotent cone.
\begin{Conjecture}\label{DominateZero} For a Higgs bundle $[(E,\phi)]$ in $\mathcal{M}_{\rm Higgs}({\rm SL}(n,\mathbb C))$, the energy density satisfies
\begin{gather*}e(f_{[(E,\phi)]})\geq e\big(f_{\lim\limits_{t\rightarrow 0} t\cdot [(E,\phi)]}\big).\end{gather*}
\end{Conjecture}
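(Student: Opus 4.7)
The plan is to deduce Conjecture~\ref{DominateZero} as the $|t| \to 0$ limit of Conjecture~\ref{monotonicity}. Under pointwise monotonicity, the function $|t| \mapsto e(f_{t \cdot [(E, \phi)]})(p)$ is non-decreasing on $(0, 1]$ for each $p \in \Sigma$, so it would suffice to verify the continuity statement that this energy density converges pointwise to that of the limit polystable Higgs bundle $(E_0, \phi_0) := \lim_{t\to 0} t\cdot [(E,\phi)]$ in the nilpotent cone. This continuity is delicate because the family of moduli points $t \cdot [(E, \phi)]$ typically approaches a different stratum of the $\mathbb C^*$-fixed locus, so the harmonic metrics $H_t$ for $(E, t\phi)$ do not converge as tensors on $E$; rather, there is a family of gauge transformations $g_t$ such that $g_t \cdot H_t$ converges to $H_0$, and one must check that the pointwise energy density (which is gauge invariant) extracted from $H_t$ converges to that of $H_0$. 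This can be handled by a standard Uhlenbeck-type compactness argument together with the $\mathbb C^*$-equivariant local structure of $\mathcal M_{\rm Higgs}({\rm SL}(n,\mathbb C))$ near a fixed point.

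To attack Conjecture~\ref{monotonicity} itself, I would follow the strategy of Dai--Li in the cyclic case. Differentiating the Hitchin equation for $(E, t\phi)$ with respect to $|t|$ produces a linear elliptic equation for $\dot H_t = \partial_{|t|} H_t$ and, from it, a differential inequality for the scalar $\partial_{|t|} e(f_t)$. In the cyclic case, Lemma~\ref{diagonal} forces $H_t$ to be diagonal, reducing the Hitchin equation to a tridiagonal scalar system to which Lemma~\ref{MaximumPrinciple} applies and yields the monotonicity. In the Hitchin section case one can use the holomorphic filtration $F_k = \bigoplus_{i=1}^k K^{(n+1-2i)/2}$ to derive a comparable system of scalar inequalities, as in \cite{LiHitchin}.

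The main obstacle is the loss of any diagonal or filtered structure for a general polystable Higgs bundle: when the harmonic metric has non-trivial off-diagonal entries in every natural holomorphic frame, the Hitchin equation genuinely couples the scalar quantities one would like to control, and the derivation of a tractable cooperative, column-diagonally-dominant, fully-coupled system appears to fail. A possible way forward is to treat $t \cdot [(E, \phi)]$ as a perturbation of its $\mathbb C^*$-fixed-point limit: for $|t|$ small the harmonic metric is close to $H_0$ and pointwise monotonicity can be read off from a first-order computation, while for $|t|$ bounded away from zero one could try to combine the integrated monotonicity of the Morse function with a pointwise equicontinuity estimate obtained from elliptic regularity. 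Gluing these two regimes into a single pointwise statement is the principal difficulty, and overcoming it likely requires new analytic input beyond the current maximum-principle toolkit, for instance a genuinely matrix-valued comparison principle for the Hitchin equation.
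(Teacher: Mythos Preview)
The statement you are attempting to prove is stated in the paper as an open \emph{conjecture}, not a theorem; the paper offers no proof of it in general. What the paper does record is that the special case where $[(E,\phi)]$ lies in the Hitchin section is proven in \cite{LiHitchin} (with the main step sketched in Section~\ref{NonCyclic}), and that Conjecture~\ref{monotonicity} --- which would imply Conjecture~\ref{DominateZero} modulo the continuity you describe --- is proven by Dai--Li only for stable cyclic Higgs bundles.

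Your proposal is therefore not a proof but a program, and you are candid about this: you reduce Conjecture~\ref{DominateZero} to Conjecture~\ref{monotonicity} plus a continuity statement at $t=0$, and then correctly identify that the Dai--Li maximum-principle machinery (Lemma~\ref{MaximumPrinciple}) relies essentially on the diagonal structure furnished by Lemma~\ref{diagonal}, which is unavailable for a general polystable Higgs bundle. Your final paragraph explicitly concedes that ``overcoming it likely requires new analytic input beyond the current maximum-principle toolkit.'' That is an accurate assessment of the state of the art, and it means there is a genuine gap: the reduction step is fine, but the target Conjecture~\ref{monotonicity} remains open outside the cyclic case, and nothing in your sketch closes it. The perturbative/gluing idea you float in the last paragraph is plausible heuristics but, as you note yourself, does not yield a pointwise statement without a new ingredient. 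In short, your write-up is a fair survey of the difficulty rather than a proof, and it aligns with the paper's own position that Conjecture~\ref{DominateZero} is open in general.
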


Li in \cite{LiHitchin} showed that Conjecture \ref{DominateZero} holds for every Higgs bundle in the Hitchin section. One may check the sketch of the proof in Section~\ref{NonCyclic}.

We recall the definition of a Hitchin fiber in Section~\ref{Two}. If we stay in a single Hitchin fiber, we expect the maximum of the energy density to occur exactly at the image of the Hitchin section.
\begin{Conjecture}[Dai--Li \cite{DaiLi2}]\label{HitchinFiberMaximal}
Let $\big[\big(\tilde{E},\tilde{\phi}\big)\big]$ be a Higgs bundle in the Hitchin section and $[(E,\phi)]$ be a distinct polystable ${\rm SL}(n,\mathbb{C})$-Higgs bundle in the same Hitchin fiber. Then the corresponding harmonic maps satisfy $e(f_{[(E,\phi)]})<e(f_{[(\tilde E,\tilde \phi)]})$ and hence $f_{[(E,\phi)]}^*g_N<f_{[(\tilde E,\tilde \phi)]}^*g_N$.

As a result, the Morse function $($energy$)$ satisfies $E(f_{[(E,\phi)]})<E(f_{[(\tilde E,\tilde \phi)]})$.
\end{Conjecture}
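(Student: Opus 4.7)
The plan is to reduce the conjecture to a maximum-principle comparison between the two Hitchin equations on $\Sigma$, generalizing the diagonal argument that works in the cyclic case. Since $(E,\phi)$ and $(\tilde E,\tilde\phi)$ lie in the same Hitchin fiber, we have $\operatorname{tr}(\phi^{k})=\operatorname{tr}(\tilde\phi^{k})$ for $2\le k\le n$; this is the spectral input. The Hitchin section carries additional structure: $\tilde\phi$ has the explicit companion form~\eqref{HitchinSection}, the holonomy lies in $\operatorname{SL}(n,\mathbb R)$, and $\tilde H$ respects the real structure associated with the principal $\operatorname{SL}(2,\mathbb R)$-embedding. My starting point is therefore to write $\tilde H=\tilde H_0\cdot e^{\tilde u}$ for an explicit reference metric $\tilde H_0$ built from the hyperbolic metric and the $n$-Fuchsian data, so that the non-diagonal part of $\tilde H$ is encoded by a bounded, controlled deformation driven by $q_2,\dots,q_n$.

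The next step is to compare energy densities pointwise. Using the local formula $e(f)=2in\,\operatorname{tr}(\phi\wedge\phi^{*_H})/\omega$ and its analogue for $\tilde f$, I would introduce the scalar
\[
 v:=\log\bigl(\operatorname{tr}(\tilde\phi\,\tilde\phi^{*_{\tilde H}})\bigr)-\log\bigl(\operatorname{tr}(\phi\,\phi^{*_H})\bigr),
\]
regularized where necessary across the discriminant of the spectral curve. Applying $\triangle_{g_0}$ to each term and substituting the local Hitchin equation~\eqref{LocalHitchinEquation} for $(H,\phi)$ and for $(\tilde H,\tilde\phi)$, the curvature contributions $\Lambda F_{\nabla_{\bar\partial_E,H}}$ are replaced by commutator expressions $[\phi,\phi^{*_H}]$. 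Together with the spectral identities $\operatorname{tr}(\phi^{k})=\operatorname{tr}(\tilde\phi^{k})$ and standard Cauchy--Schwarz type inequalities on trace forms (yielding $\operatorname{tr}(\phi\phi^{*_H})\ge|\operatorname{tr}(\phi^2)|$ etc.), the goal is to arrive at a differential inequality of cooperative type so that the maximum principle Lemma~\ref{MaximumPrinciple} applies. For cyclic Higgs bundles this programme reduces, via Lemma~\ref{diagonal}, to a scalar Toda-like system and was carried out by Dai--Li; the task here is to extend the argument to the full Hitchin fiber.

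The final step is the rigidity analysis: if $v$ attains a nonpositive minimum in the interior, the differential inequality will force $v$ to be constant, at which point the equality cases of Cauchy--Schwarz together with the spectral matching $\operatorname{tr}(\phi^{k})=\operatorname{tr}(\tilde\phi^{k})$ should imply that $\phi$ and $\tilde\phi$ are gauge equivalent, contradicting the hypothesis $[(E,\phi)]\neq[(\tilde E,\tilde\phi)]$. The strict pullback metric inequality $f_{[(E,\phi)]}^{*}g_N<f_{[(\tilde E,\tilde\phi)]}^{*}g_N$ follows from the pointwise energy comparison combined with the fact that both pullbacks have the same $(2,0)$-part $2n\operatorname{tr}(\phi^{2})=2n\operatorname{tr}(\tilde\phi^{2})$ by~\eqref{PullbackMetric}; the integrated statement $E(f_{[(E,\phi)]})<E(f_{[(\tilde E,\tilde\phi)]})$ is then immediate by integration.

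The main obstacle will be producing the cooperative/diagonally-dominant structure needed for Lemma~\ref{MaximumPrinciple} in the non-cyclic setting. When $(E,\phi)$ is cyclic the harmonic metric is diagonal and every off-diagonal Hitchin contribution vanishes identically; outside the cyclic locus the off-diagonal entries of $H$ produce cross-terms in $[\phi,\phi^{*_H}]$ that cannot obviously be bounded by trace quantities controlled only by the Hitchin base $(q_2,\dots,q_n)$. A promising route to sidestep this is to push the comparison to the spectral curve $\Sigma_\phi\subset T^{*}\Sigma$ and use the BNR correspondence, where both $(E,\phi)$ and $(\tilde E,\tilde\phi)$ become Hermitian line bundles with a tautological Higgs field, and the problem translates into comparing two Hermitian metrics on line bundles over a fixed (possibly singular) curve; handling the branch locus and transporting a pointwise inequality on $\Sigma_\phi$ back to $\Sigma$ will itself require a separate maximum-principle argument near the discriminant.
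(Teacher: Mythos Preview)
The statement you are attempting to prove is labeled a \emph{Conjecture} in the paper, and immediately after it the paper says: ``In the case $n=2$, Conjecture~\ref{HitchinFiberMaximal} is shown by Deroin and Tholozan~\cite{DominationFuchsian}. In the case $n\ge 3$, even the integrated version of Conjecture~\ref{HitchinFiberMaximal} remains open.'' There is no proof in the paper to compare your proposal against; the paper records the statement as an open problem.

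Your proposal is a strategy outline, and the gap you yourself flag is the real one. The passage from the two Hitchin equations to a differential inequality for $v=\log\operatorname{tr}(\tilde\phi\tilde\phi^{*_{\tilde H}})-\log\operatorname{tr}(\phi\phi^{*_H})$ of the cooperative, column-diagonally-dominant form required by Lemma~\ref{MaximumPrinciple} is not established and is not known. The spectral identities $\operatorname{tr}(\phi^k)=\operatorname{tr}(\tilde\phi^k)$ control only the holomorphic invariants of $\phi$; they say nothing about the commutators $[\phi,\phi^{*_H}]$, which depend on the full harmonic metric $H$, and in the non-cyclic case $H$ has off-diagonal entries that enter $[\phi,\phi^{*_H}]$ with signs you cannot control by Cauchy--Schwarz alone. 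The cyclic case works precisely because Lemma~\ref{diagonal} kills those terms; absent that, no mechanism in your sketch produces the needed sign structure. The spectral-curve workaround is also only a hope: the pushforward of a Hermitian line bundle from $\Sigma_\phi$ does not in general solve a scalar equation on $\Sigma_\phi$ that is amenable to a maximum principle, and even if one had a pointwise inequality upstairs, summing over the sheets to recover $\operatorname{tr}(\phi\phi^{*_H})$ on $\Sigma$ loses the ordering. In short, your plan identifies the right quantities and the right tool (Lemma~\ref{MaximumPrinciple}), but the crucial step---manufacturing cooperativity outside the cyclic locus---is exactly the open part of the conjecture.
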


In the case $n=2$, Conjecture~\ref{HitchinFiberMaximal} is shown by Deroin and Tholozan in~\cite{DominationFuchsian}.

In the case $n\geq 3$, even the integrated version of Conjecture \ref{HitchinFiberMaximal} remains open.
\begin{Conjecture}\label{MorseFiber}Inside each Hitchin fiber of $\mathcal M_{\rm Higgs}({\rm SL}(n,\mathbb C))$, the maximum of the Morse function $($energy$)$ occurs exactly at the image of the Hitchin section.
\end{Conjecture}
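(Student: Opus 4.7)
The plan is to attack Conjecture \ref{MorseFiber} by working within a fixed Hitchin fiber over $(q_2,\ldots,q_n)$ and comparing an arbitrary polystable representative $(E,\phi)$ with the Hitchin-section representative $(\tilde E, \tilde\phi) = s(q_2,\ldots,q_n)$. Let $H$, $\tilde H$ denote their harmonic metrics. Since the energy equals $n\int_\Sigma \|\phi\|_H^2\,\omega$, the overall strategy is to produce a pointwise inequality $\|\phi\|^2_H \le \|\tilde\phi\|^2_{\tilde H}$ and integrate; this is essentially the content of Conjecture \ref{HitchinFiberMaximal}, which I would try to establish by combining the filtration-based elliptic inequalities of Section \ref{NonCyclic} with the Dai--Li maximum principle Lemma \ref{MaximumPrinciple}.

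The concrete steps are as follows. First, apply inequality \eqref{standard} to the tautological holomorphic filtration $0 \subset K^{(n-1)/2} \subset K^{(n-1)/2}\oplus K^{(n-3)/2} \subset \cdots \subset \tilde E$ of the Hitchin-section bundle. Because successive quotient line bundles are intertwined by the constants $r_k$, the inequalities collapse into the coupled Toda-type system whose solution $\tilde H$ is pinned down by the hyperbolic uniformization data, generalizing the scalar analysis of Examples \ref{ExampleRank2}--\ref{ExampleRankN}. Second, for the arbitrary $(E,\phi)$ in the same fiber, pass to the spectral-curve picture via the Beauville--Narasimhan--Ramanan correspondence: $(E,\phi)$ and $(\tilde E,\tilde\phi)$ correspond to rank-one torsion-free sheaves $L$, $\tilde L$ on the spectral curve $S\subset \operatorname{Tot}(K)$ cut out by $\det(\lambda-\phi)=\lambda^n - q_2\lambda^{n-2}+\cdots\pm q_n$, and the harmonic metrics pull back to metrics $h_L$, $h_{\tilde L}$ satisfying a diagonal Hermitian--Einstein system upstairs. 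Third, compare $\log(h_L/h_{\tilde L})$ via the strong maximum principle; the forcing term should be sign-definite because $\tilde L$ is the distinguished line bundle on $S$ whose pushforward splits maximally as $\bigoplus K^{(n+1-2i)/2}$. Pushing the resulting inequality forward along $\pi\colon S\to \Sigma$ and using $e(f)=2in\cdot\operatorname{tr}(\phi\wedge\phi^{*_H})/\omega$ yields the pointwise bound, and integration gives the Morse-function inequality with equality precisely on the Hitchin section.

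The main obstacle is the third step. Generic fibers admit no global symmetry forcing diagonalization of $H$, so the clean Dai--Li coupled system does not apply directly to $(E,\phi)$ downstairs; the spectral-curve detour is what makes it reappear in diagonal form, but $h_L$ blows up in a controlled way along the ramification locus of $\pi$, and the maximum principle must be strengthened to accommodate these singularities. Mochizuki's decoupling estimates in Theorem \ref{Asym2} suggest that such singular behavior is tractable in the generically regular semisimple case, and this is the setting where I would carry the argument out first. Extending to Higgs bundles in the nilpotent cone or other singular Hitchin fibers will almost certainly require a limiting argument via the $\mathbb C^*$-flow together with the complementary Conjecture \ref{DominateZero}, which for the Hitchin section is already proved in \cite{LiHitchin}; combining the two bounds would close up the fiberwise statement for all polystable points.
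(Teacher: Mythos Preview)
The statement you are attempting to prove is a \emph{conjecture}, not a theorem: the paper does not provide a proof. Immediately after stating it the paper remarks that for $n=2$ the stronger pointwise Conjecture~\ref{HitchinFiberMaximal} is proven by Deroin--Tholozan, while for $n\geq 3$ ``even the integrated version of Conjecture~\ref{HitchinFiberMaximal} remains open.'' So there is no paper proof to compare against; your task is really to assess whether your outline could become a genuine argument.

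As a research strategy your spectral-curve idea is plausible, but what you have written is not a proof and contains at least one unjustified leap. The crux is your third step: you assert that comparing $\log(h_L/h_{\tilde L})$ on the spectral curve should yield a sign-definite forcing term ``because $\tilde L$ is the distinguished line bundle on $S$ whose pushforward splits maximally.'' This is precisely the point that needs to be \emph{proven}, not assumed. The distinguished nature of the Hitchin-section line bundle is a holomorphic fact about how it sits in $\operatorname{Pic}(S)$, but the inequality you want is a statement about the Hermitian--Einstein metrics, and there is no a~priori mechanism converting the former into the latter. In the cyclic case (Section~\ref{ExampleRankN}) the diagonal structure of the harmonic metric is what makes the Dai--Li system tractable; on the spectral curve of a generic fiber the metric $h_L$ is genuinely a single scalar, but the equation it satisfies couples to the branch divisor in a way you have not analyzed, and you yourself flag the ramification singularities as the ``main obstacle.'' Mochizuki's estimates in Theorem~\ref{Asym2} concern the large-$t$ regime along a $\mathbb C^*$-ray and say nothing about comparing two distinct points in the same Hitchin fiber at $t=1$, so invoking them here is not obviously relevant. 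Until you can write down the actual elliptic equation for $\log(h_L/h_{\tilde L})$ and exhibit the sign of its zeroth-order term, the argument remains a wish rather than a proof.
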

If Conjecture \ref{MorseFiber} holds, one can define the Hitchin section intrinsically as the only maximum of the Morse function inside each Hitchin fiber instead of the explicit construction in the form~(\ref{HitchinSection}).

In the end of this section, let's explain Conjectures~\ref{DominateZero} and~\ref{HitchinFiberMaximal} in terms of the following picture:
\begin{figure}[h!]\centering
 \includegraphics{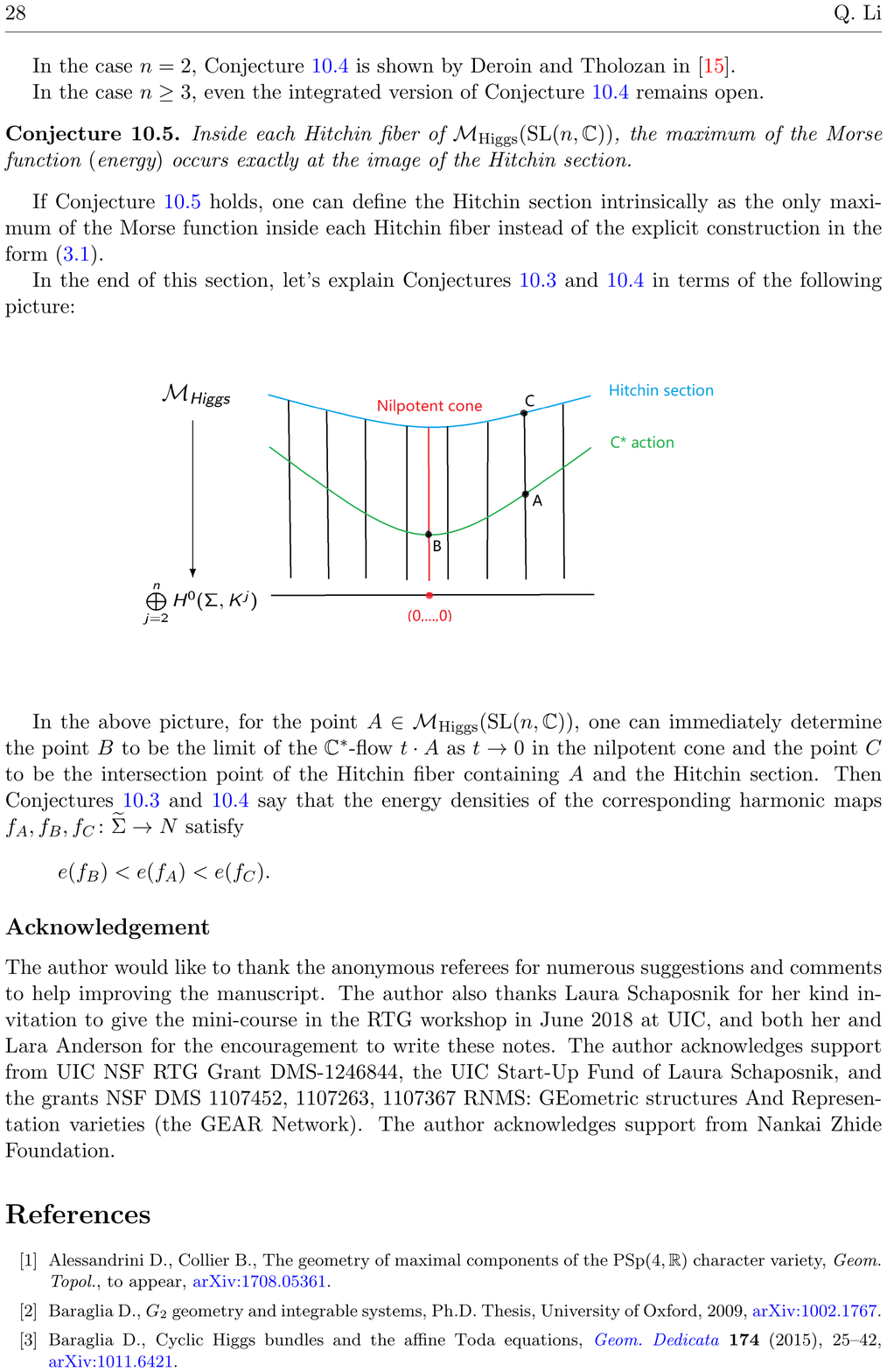}
\end{figure}

In the above picture, for the point $A\in \mathcal M_{\rm Higgs}({\rm SL}(n,\mathbb C))$, one can immediately determine the point~$B$ to be the limit of the $\mathbb C^*$-flow $t\cdot A$ as $t\rightarrow 0$ in the nilpotent cone and the point~$C$ to be the intersection point of the Hitchin fiber containing $A$ and the Hitchin section. Then Conjectures~\ref{DominateZero} and~\ref{HitchinFiberMaximal} say that the energy densities of the corresponding harmonic maps $f_A, f_B, f_C\colon \widetilde\Sigma\rightarrow N$ satisfy \begin{gather*}e(f_B)<e(f_A)< e(f_C).\end{gather*}

\subsection*{Acknowledgement}
The author would like to thank the anonymous referees for numerous suggestions and comments to help improving the manuscript. The author also thanks Laura Schaposnik for her kind invitation to give the mini-course in the RTG workshop in June 2018 at UIC, and both her and Lara Anderson for the encouragement to write these notes. The author acknowledges support from UIC NSF RTG Grant DMS-1246844, the UIC Start-Up Fund of Laura Schaposnik, and the grants NSF DMS 1107452, 1107263, 1107367 RNMS: GEometric structures And Representation varieties (the GEAR Network). The author acknowledges support from Nankai Zhide Foundation.

\pdfbookmark[1]{References}{ref}
\LastPageEnding

\end{document}